\documentclass[preprint,3p]{elsarticle}
\usepackage[utf8]{inputenc}
\usepackage{amssymb}
\usepackage{amsmath}
\usepackage{amsthm}
\usepackage{tikz}

\graphicspath{{counterfigs/}}

\usetikzlibrary{calc}
\usepackage{graphicx}
\usepackage[colorinlistoftodos]{todonotes}
\usepackage[colorlinks=true, allcolors=blue]{hyperref}
\usetikzlibrary{arrows}

\usepackage{xcolor} 
\usepackage{mathdots}
\usepackage{natbib}
\usepackage[normalem]{ulem}

\usepackage{subcaption}
\usepackage{multicol}
\usepackage[justification = centering]{caption}
\usepackage{float}
\usepackage[capitalise]{cleveref}

\newtheorem{theorem}{Theorem}[section]
\newtheorem{corollary}[theorem]{Corollary}
\newtheorem{lemma}[theorem]{Lemma}

\newtheorem{proposition}[theorem]{Proposition}
\newtheorem{conjecture}[theorem]{Conjecture}

\newtheorem{question}[theorem]{Question}

\newcommand{\hatPi}{\widehat{\Pi}}
\newcommand{\hatpi}{\widehat{\pi}}
\newcommand{\Unst}{U}

\begin{document}
 
\begin{frontmatter}

\title{2-cell embeddings of cubic graphs I. The unstable dual}

\author[1]{MacKenzie Carr\corref{cor1}}
\ead{mackenzie.carr@torontomu.ca}

\author[2,3]{Bojan Mohar\thanks{On leave from:
FMF, Department of Mathematics, University of Ljubljana, Ljubljana, Slovenia.}
}
\ead{mohar@sfu.ca}

\cortext[cor1]{Corresponding author}

\affiliation[1]{organization = {Department of Mathematics, Toronto Metropolitan University}, 
city={Toronto}, 
postcode={M5B 2K3}, 
state={ON}, 
country={Canada}}

\affiliation[2]{organization = {Department of Mathematics, Simon Fraser University}, 
city={Burnaby}, 
postcode={V5A 1S6}, 
state={BC}, 
country={Canada}}

\affiliation[3]{organization = {FMF, Department of Mathematics, University of Ljubljana}, 
city={Ljubljana}, 
postcode={1000}, 
country={Slovenia}}

\fntext[fund2]{Supported in part by the NSERC Discovery Grant R832714 (Canada),
by the ERC Synergy grant (European Union, ERC, KARST, project number 101071836),
and by the Research Core Grant P1-0297 of ARIS (Slovenia).}

\begin{abstract}
     In this paper, the first of a two-part series, we explore 2-cell embeddings of cubic graphs, particularly those with small genus. Using local rotations, we introduce a new way of describing the space of 2-cell embeddings and their mutual relationship for any fixed (cubic) graph. We introduce the unstable dual of an embedding of a cubic graph, a subgraph of the dual graph, and describe how the genus of the corresponding embedding can be recovered from properties of the unstable dual. Finally, we characterize the unstable duals of embeddings with genus at most 2 of cubic cyclically 5-edge connected planar graphs and use these to generate those of genus 3 that have connectivity at most~2. 
\end{abstract}

\begin{keyword}
    2-cell embedding; genus; genus distribution; log-concavity
\end{keyword}

\end{frontmatter}

\section{Introduction}

Let $G$ be a connected graph. An \emph{embedding} of $G$ in the orientable surface ${\mathbb S}_k$ of genus $k$ is a function $f:G \to {\mathbb S}_k$ that is continuous\footnote{Here, $G$ is viewed as the topological space of the corresponding 1-dimensional cell complex.} and one-to-one, so that the embedded edges intersect only at their shared endpoints. A \emph{2-cell embedding} of $G$ is an embedding in which every face is homeomorphic to an open disk. From now on, all embeddings are assumed to be 2-cell embeddings. Two embeddings $f$ and $f'$ of a graph are \emph{equivalent} if there is an orientation-preserving homeomorphism $h:{\mathbb S}_k \to {\mathbb S}_k$ such that $f'=hf$. 

Given a graph $G$, the number of (equivalence classes of) 2-cell embeddings of $G$ on the orientable surface of genus $k$ is denoted by $g_k(G)$. The sequence $\{g_k(G)\}_{k\geq 0}$ is the \emph{genus distribution} of $G$. 

Recent interest in the genus distribution of a graph stems from a proposal in \cite{GrossFurst} to develop a systematic theory of genus distributions, as well as a 1989 conjecture of Gross, Robbins and Tucker \cite{GrossRobbinsTucker} known as the Log-Concavity Genus Distribution (LCGD) Conjecture. 

\begin{conjecture}
    [\cite{GrossRobbinsTucker}] The genus distribution of every graph is log-concave.
    \label{conj:LCGD}
\end{conjecture}

We say that a sequence $\{a_k\}_{k\geq 0}$ is \emph{log-concave} if $a_i^{2}\geq a_{i-1}a_{i+1}$ for all $i\geq 1$. Note that a sequence that is log-concave must also be \emph{unimodal}, meaning that there is some integer $k$ such that $a_i\leq a_{i+1}$ for all $i<k$ and $a_i\geq a_{i+1}$ for all $i\geq k$. For this reason, a sequence that is log-concave is also referred to as a \emph{strongly unimodal} sequence. However, it is not the case that every unimodal sequence is log-concave. 

The LCGD Conjecture remained open until 2024, when Mohar \cite{MoharCounterexample} constructed graphs whose genus distributions are not log-concave. 

\begin{theorem}
[\cite{MoharCounterexample}] For every $k\geq 1$, there exists a 4-connected graph whose genus distribution has at least $k$ terms at which the log-concavity is violated. 
\label{thm:counterexample}
\end{theorem}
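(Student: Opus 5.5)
The plan is to build a graph whose genus distribution is concentrated almost entirely at the two extremes of the admissible range, since log-concavity forbids a sequence from dipping in the middle. I would start from a small base graph $H$ whose embeddings are dominated by low genus. Into $H$ I would insert many copies of a gadget joined along 2-sums or bar-amalgamations. For these operations the genus distribution is a convolution, or a small linear combination of convolutions. The gadget should have very few embeddings of small genus and an overwhelming number of embeddings of maximum genus. Gadgets of this kind exist among graphs with a large number of parallel paths, or among near-complete bipartite blocks. The total genus polynomial $\sum_k g_k x^k$ is then roughly a product of gadget polynomials of the form $a + b x^{m}$, with a gap of $m$ between the two terms.

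The key steps, in order, are as follows.

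First, I would compute the genus polynomial of one gadget $Q$ explicitly and show that it has the shape $\varepsilon(x) + B x^{m}$. Here $\varepsilon$ is supported on degrees below $m$, has tiny coefficients relative to $B$, and has at least one nonzero coefficient strictly below $m$.

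Second, I would show that combining $k$ gadgets yields a polynomial close to $(\varepsilon(x)+Bx^m)^k$. This product has $k$ ``gaps'': at indices near $jm + i$ with $0<i<m$, a coefficient is small but its neighbours are large. Each such dip violates $a_i^2 \ge a_{i-1}a_{i+1}$. Alternatively, a dip occurs where the coefficient is zero between two nonzero terms, which violates log-concavity directly.

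Third, I would make the final graph 4-connected. To do this I would replace the 2-sums by attachments along 4 vertices, or join the gadgets through a 4-connected frame. I would check that the embeddings restricted to each gadget still factor approximately, with an error term that is dominated.

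The main obstacle is the last step. Forcing 4-connectivity destroys the clean multiplicativity of genus polynomials, so I must show that the error terms do not fill the gaps. To handle this, I would first choose the gadget degrees so large that the number of "low" embeddings is exponentially smaller than the number of "high" ones. I would then bound the contribution of every cross-interaction by a crude count of rotation systems at the few attaching vertices. That count is only a constant factor per gadget, so the violations should survive.
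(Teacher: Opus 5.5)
The paper does not prove this theorem; it is quoted from \cite{MoharCounterexample}. Your outline has a genuine gap at its core: Step~2 does not produce any violation of log-concavity.

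Genus distributions have no internal zeros. By the interpolation theorem, a connected graph has 2-cell embeddings in every orientable genus between its minimum and maximum genus. So your fallback of ``a zero between two nonzero terms'' cannot occur. Moreover, for positive sequences without internal zeros, convolution preserves log-concavity (Hoggar's theorem). Joining graphs by a bridge multiplies genus polynomials up to a constant, so $(\varepsilon(x)+Bx^m)^k$ can fail log-concavity only if the gadget polynomial $\varepsilon(x)+Bx^m$ already fails it.

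The shape you ask for (tiny coefficients below degree $m$, a huge one at $m$) is not a dip. For example, $(ax^{m-1}+Bx^m)^k=x^{k(m-1)}(a+Bx)^k$ is binomial. A coefficient near $jm+i$ is small relative to its neighbours only if some gadget coefficient already satisfies $a_i^2<a_{i-1}a_{i+1}$. So Step~1 silently presupposes a graph whose genus distribution is not log-concave. That is the theorem for $k=1$, and it is the whole difficulty.

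The gadgets you name do not supply one. Many parallel paths give dipoles (equivalently $K_{2,n}$), whose genus distribution is explicit: proportional to the alternate unsigned Stirling numbers $c(n+1,n-2j)$, and log-concave. ``Overwhelmingly many embeddings of maximum genus'' describes almost every graph and is perfectly compatible with log-concavity.

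Step~3 is not an argument as it stands either. Once gadgets are attached along four vertices, genus is no longer additive. Facial walks run through several attachment vertices and create embeddings of intermediate genus. These are exactly the coefficients that must stay small, and nothing makes them lower order. A ``constant factor per gadget'' compounds to $C^k$, which swamps any fixed violation margin.

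Even exact products dilute violations. For instance, $(1+\epsilon x+x^2)^k$ violates log-concavity at index $1$ only when $\epsilon^2<2/(k+1)$. So you would need a gadget whose violation is quantitatively strong and survives the gluing.

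What is missing is a structural reason, inside a highly connected graph, why embeddings of one particular genus are rare compared with the neighbouring genera. The results this paper imports from \cite{MoharCounterexample} are the kind of rigidity statement needed. Lemmas~\ref{lem:surfaceNonSep} and~\ref{lem:numCycles} say that in an embedding of genus $g$ at most $g$ cycles of a peripheral family can be non-facial. No step of your outline plays that role.
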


In the following sections, we investigate the genus distributions of cubic graphs, with the goal of using the rotation system of an embedding to determine its genus. In Section \ref{section:preliminaries}, we provide a brief overview of the relevant terminology and background. 
In Section \ref{section:framework}, we introduce a new method of describing the rotation system of an embedding, by comparing the local rotation at each vertex to the rotation system of some fixed embedding of the graph. 
In Section \ref{section:cuts and duals}, we introduce a method of describing an embedding of a cubic planar graph using the dual graph and methods for calculating the genus of an embedding using edge-cuts in the graph or particular subgraphs of the dual graph. 
In Section \ref{section:g2}, we characterize the embeddings of genus 2 in a cubic cyclically 5-edge-connected planar graph in terms of subgraphs of its dual graph. In part II of this study \cite{Part2}, we give applications of the results in part I towards a deeper understanding of why and when the log-concavity condition may hold for the genus distribution of a cubic graph. 

\section{Preliminaries}
\label{section:preliminaries}

We denote the number of vertices of a graph $G$ by $n$, i.e.~$n=|V(G)|$, and the \emph{degree} of a vertex $v\in V(G)$ by $d_G(v)$, or $d(v)$ when $G$ is clear from context. A \emph{walk} in a graph is a sequence $v_0,e_1,v_1,\dots,e_k,v_k$ of vertices and edges such that, for $1\leq i \leq k$, $e_i=v_{i-1}v_i$. We may denote a walk by only the edges in the sequence or, in a simple graph, by only the vertices in the sequence. A $(u,v)$-walk has first vertex $u$ and last vertex $v$. A \emph{path} is a walk with no repeated vertex. We denote the path graph with $n$ vertices by $P_n$. A \emph{cycle} is a walk with no repeated edge in which only the first and last vertices are the same. We denote the cycle graph with $n$ vertices by $C_n$. 

A \emph{vertex-cut} of a graph $G$ is a set $X\subseteq V(G)$ such that $G-X$ is disconnected. Then, $G$ is \emph{$k$-connected} if it contains at least $k+1$ vertices and does not contain a vertex-cut of size less than $k$. For a non-empty proper subset $X\subset V(G)$, the set of edges having one endpoint in $X$ and the other in $V(G)\setminus X$, denoted by $E(X,V(G)\setminus X)$, is an \emph{edge-cut} (or, for short, just a \emph{cut}) in $G$. An edge-cut with $k$ edges is called a \emph{$k$-cut}. Furthermore, $G$ is \emph{$k$-edge-connected} if it does not contain an edge-cut of size less than $k$. 
An edge-cut \emph{separates two cycles} in a graph if its removal results in a disconnected graph in which two distinct components contain a cycle. Accordingly, $G$ is \emph{cyclically $k$-edge connected} (C$k$EC) if no edge-cut of size less than $k$ separates two cycles in $G$. A cycle $C$ is a \emph{separating cycle} in $G$ if $G-V(C)$ is disconnected. 

Given a set of vertices $S\subseteq V(G)$, the \emph{subgraph induced by $S$}, denoted by $G[S]$, is the graph with vertex set $S$ and containing every edge in $E(G)$ whose endpoints are both in $S$. An \emph{induced cycle} is an induced subgraph that is a cycle. Given an embedding $\Pi$ of a graph $G$, if $H$ is a connected subgraph of $G$, then the \emph{induced embedding} of $H$ is obtained from $\Pi$ by ignoring all of the edges in $E(G)\setminus E(H)$. 

The \emph{(minimum) genus} of a graph $G$ is the minimum $k$ for which $G$ has a 2-cell embedding in the orientable surface ${\mathbb S}_k$. A \emph{planar} graph is a graph with minimum genus 0 and a \emph{toroidal} graph has minimum genus 1. Similarly, the \emph{maximum genus} of $G$ is the maximum $M$ for which $G$ has a 2-cell embedding on the orientable surface of genus $M$. Given an embedding $\Pi$ of a graph $G$, we denote the genus of $\Pi$ by $g(\Pi)$. 

A clockwise \emph{local rotation} at a vertex $v\in V(G)$ is a (clockwise) cyclic ordering of the edges incident with $v$. In an embedding $\Pi$, we denote the local rotation at the vertex $v$ by $\pi_v$. A \emph{rotation system} is a set of rotations, one for each vertex of the graph. Two embeddings are equivalent if and only if their corresponding rotation systems are the same. From this, we conclude that a graph $G$ has a total of $\prod_{v\in V(G)} (d(v)-1)!$ distinct embeddings. 

Let $\Pi$ be an embedding of a graph $G$ in an orientable surface $S$. A \emph{facial walk} is a walk in $G$ that bounds a face of the embedding. Similarly, a \emph{facial cycle} is a cycle in $G$ that bounds a face of the embedding. Two embeddings of a graph are equivalent if and only if their clockwise-directed facial walks are the same. A cycle $C$ of $G$ is a \emph{surface-separating} cycle if it splits the surface into two components. This is the same as being \emph{zero-homologous} (the trivial element in the $\mathbb{Z}_2$-homology of the 2-dimensional cell complex of the surface \cite{hatcher}). As we will be dealing with more general elements of the cycle space of the graph, we will use the latter terminology. A \emph{contractible} cycle $C$ of $G$ separates the surface into two components, one of which is a disk. Equivalently, $C$ is a contractible cycle if it is contractible as a curve on the surface. 

An embedding $\Pi$ of a graph $G$ is a \emph{polyhedral} embedding if every $\Pi$-facial walk is a facial cycle and the intersection of every pair of $\Pi$-facial walks is either empty, a single vertex, or a single edge. 
Two vertices or two edges are \emph{cofacial} if there exists some facial walk containing both. 

A fundamental result of Tutte shows the relationship between the connectivity of a graph and the structure of the facial cycles in a planar embedding. 

\begin{theorem}
[\cite{Tutte1963}]
Let $G$ be a 3-connected planar graph. A cycle $C$ of $G$ is a facial cycle of some planar embedding of $G$ if and only if it is induced and non-separating. Moreover, $C$ is a facial cycle in every planar representation of $G$. 
\label{thm:Tutte}
\end{theorem}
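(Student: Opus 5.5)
We prove the two directions of the characterization separately and then deduce the uniqueness statement. We use two standard facts about a planar embedding of a 2-connected graph: every face is bounded by a cycle, and by the Jordan curve theorem every cycle $C$ splits the sphere into an interior and an exterior region. Fix a 3-connected planar graph $G$, which is in particular 2-connected.

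\emph{Facial implies induced and non-separating.} Let $C$ bound a face $F$ in some planar embedding. Suppose first that $C$ has a chord $e=xy$. Then $e$ lies in the region of $C$ not containing $F$. The path $C-\{x,y\}$ splits into two nonempty arcs $A_1,A_2$, since $x$ and $y$ are not adjacent along $C$. Any path from $A_1$ to $A_2$ avoiding $x$ and $y$ must lie in the closed region bounded by $C$ on the side away from $F$. But $e$ separates that side into two parts, one bordered by $A_1$ and the other by $A_2$. Hence $\{x,y\}$ is a 2-vertex-cut, contradicting 3-connectivity.

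Suppose next that $G-V(C)$ is disconnected. All components of $G-V(C)$ lie in the region of $C$ not containing $F$. Let $K$ be one component and let $S=N(K)\cap V(C)$; we have $|S|\ge 3$ by 3-connectivity. The face $F$ is empty, so a second component $K'$ must also sit on the same side and, by planarity, inside a region cut off by $K\cup C$. Such a region meets $C$ only in a segment between two consecutive attachment points $s,s'\in S$. Then $\{s,s'\}$ separates $K'$ from the rest of $G$, again a contradiction. I expect this "bridges on one side interlace" argument to be the main delicacy and would write it out carefully with the Jordan curve theorem.

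\emph{Induced and non-separating implies facial in every embedding.} Take any planar embedding and an induced non-separating cycle $C$. Because $C$ is induced, every edge not in $C$ belongs to a bridge of $C$ that contains a vertex off $C$. Because $C$ is non-separating, there is exactly one such bridge $B=G-V(C)$ plus its attachment edges. Since $B-V(C)$ is connected, it lies entirely inside or entirely outside $C$. The other side then contains no vertices or edges of $G$, so it is a face bounded by $C$. This proves the converse and also the "moreover" clause simultaneously. Existence of an embedding in which $C$ is facial follows because $G$ is planar, so some embedding exists and, by this argument, $C$ is facial in it.
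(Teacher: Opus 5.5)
The paper does not prove this statement; it quotes it from Tutte and uses it as a black box, so there is no in-paper argument to compare with. Your proof is correct, and it is essentially the classical bridge argument, which is also how Tutte proceeds.

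\begin{itemize}
\item An induced non-separating cycle has a single bridge. That bridge lies on one side of $C$, so the other side is a face in every embedding. Here 3-connectivity is used only to ensure $G\neq C$, so that the bridge exists.
\item Conversely, if $C$ is facial, every bridge (chords included) lies on the same side. Two bridges on that side that cannot interlace force a 2-separation.
\end{itemize}

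Your chord case is just the special case of a bridge with two attachments, so the two halves of the first direction could be merged.

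When you write out the non-separating part, two points need to be stated more precisely.

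First, consider the face $\Phi$ of $C\cup K$ (together with the edges from $K$ to $S$) that contains $K'$. It meets $C$ either in at most one vertex, in which case $G$ is disconnected or has a cut-vertex, or in exactly a closed segment $C[s,s']$ between cyclically consecutive vertices of $S$. What makes this work is that distinct open segments between consecutive attachments lie on distinct faces. This follows from the connectivity of $K$: a curve inside one face joining two different open segments would split the side of $C$ away from $F$ into two parts, each containing attachments of $K$, which is impossible.

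Second, $\{s,s'\}$ need not separate $K'$ from \emph{the rest of $G$}. Interior vertices of $C[s,s']$, and possibly other components of $G-V(C)$ lying in $\Phi$, can be in the same piece as $K'$. What is true, and suffices, is that every path from $K'$ to $K$ passes through $s$ or $s'$. Hence $\{s,s'\}$ is a 2-vertex-cut, contradicting 3-connectivity.
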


The induced non-separating cycles of a graph are called \emph{peripheral cycles}. Thus, Theorem~\ref{thm:Tutte} says that the peripheral cycles of a 3-connected planar graph $G$ are exactly the facial cycles of a planar embedding of $G$. 

Cycles that are graph non-separating may be surface separating cycles in a particular embedding. However, this cannot happen for peripheral cycles. 

\begin{lemma}
    [\cite{MoharCounterexample}] Let $C$ be a peripheral cycle of $G$ that is embedded in a surface $S$. If $C$ is not facial, then it is not zero-homologous. 
    \label{lem:surfaceNonSep}
\end{lemma}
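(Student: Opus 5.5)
The plan is to argue by contraposition: assume $C$ is zero-homologous in $S$ and show that it must then be facial. Zero-homologous means $C$ splits $S$ into two components, say $S_1$ and $S_2$, each having $C$ as its boundary. We show that one of $S_1,S_2$ contains no vertex or edge of $G$ outside $C$, and is an open disk bounded by $C$; that open disk is then a face of the embedding with facial walk $C$.

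\textbf{Step 1: the graph outside $C$ lies on one side.} Every vertex of $G-V(C)$ and every edge not in $E(C)$ has its interior in $S_1$ or in $S_2$. Since $C$ is non-separating in $G$, the graph $G-V(C)$ is connected (or empty). Its image is a connected subset of $S\setminus C$, so it lies entirely in one component, say $S_1$.

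\textbf{Step 2: the chords lie on the same side.} Since $C$ is induced, $C$ has no chords. Hence every edge of $G$ not in $C$ has at least one endpoint in $G-V(C)$, and so its interior meets $S_1$. The interior of such an edge avoids $C$, so it lies wholly in $S_1$. Consequently $S_2$ contains no point of $G$ at all, i.e.\ $S_2$ is disjoint from the embedded graph. (If $G=C$, the statement is trivial: the 2-cell embedding of a cycle is the sphere with two faces.)

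\textbf{Step 3: $S_2$ is a face, and in fact a disk.} Since $S_2$ is a connected open subset of $S\setminus G$, it is contained in a single face $F$. Because $\partial S_2\subseteq C\subseteq G$, the set $S_2$ is closed in $S\setminus G$ as well as open, so $S_2$ is a full component of $S\setminus G$, that is, $S_2=F$. All faces of a 2-cell embedding are open disks, so $S_2$ is a disk. Its boundary is exactly $C$: every edge of $C$ lies in the closure of both sides, because $C$ is zero-homologous and so each edge of $C$ has $S_1$ on one side and $S_2$ on the other. Therefore the facial walk of $F$ traverses $C$ exactly once, and $C$ is facial. This contradicts the hypothesis, which proves the lemma.

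\textbf{Main obstacle.} The delicate point is the topology in Steps 1 and 3. First, ``zero-homologous'' must be turned into ``$S\setminus C$ has exactly two components, and each edge of $C$ borders both.'' This follows because a $\mathbb{Z}_2$-null-homologous simple closed curve bounds a 2-chain: the faces can be 2-coloured so that $C$ is exactly the set of edges between the two colour classes. Second, one must verify that the facial walk of $S_2$ is precisely $C$. It is cleanest to argue combinatorially with that 2-chain: the faces in the colour class of $S_2$ contain no edge outside $C$, since every edge outside $C$ lies in $S_1$ by Step 2. A connected union of faces using only the edges of $C$ must therefore be a single face whose boundary walk is $C$.
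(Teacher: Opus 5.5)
The paper does not prove this lemma itself; it quotes it from \cite{MoharCounterexample}. Your argument is correct and is the standard one behind that result. Connectivity of $G-V(C)$ puts all of it on one side $S_1$ of $C$, and inducedness puts every edge outside $C$ in $S_1$ as well. The other side $S_2$, with $\partial S_2\subseteq C$, is therefore a whole component of $S\setminus G$, i.e.\ a face, and it is bounded by $C$. The only topological input is that each edge of $C$ borders both sides, which holds because a separating curve is two-sided. You handle this correctly, and your face 2-colouring version makes it fully combinatorial: write $C=\partial$ of a set of faces, and note that all faces around vertices of $G-V(C)$ are forced into one colour class.
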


With the additional condition that the graph $G$ is C5EC, pairs of peripheral cycles are together non-separating in $G$. 

\begin{lemma}
Let $G$ be a cubic C5EC planar graph with peripheral cycles $C_1$ and $C_2$. Then, $G-V(C_1\cup C_2)$ is connected. 
\label{lem:G-c1c2 conn}
\end{lemma}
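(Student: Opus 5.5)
The plan is to argue by contradiction: assuming $G-V(C_1\cup C_2)$ is disconnected, I will produce an edge-cut with at most $4$ edges that separates two cycles, which contradicts the C5EC hypothesis. Throughout I fix a planar embedding of $G$. A cubic C5EC graph (apart from trivially small cases, which I would check directly) is $3$-connected, so by Theorem~\ref{thm:Tutte} the peripheral cycles $C_1$ and $C_2$ bound faces $F_1$ and $F_2$ of this embedding. Since the embedding is polyhedral, $C_1\cap C_2$ is empty, a vertex, or an edge. Because $G$ is cubic, a shared vertex forces a shared edge. Hence either $C_1\cap C_2=\emptyset$ or $C_1\cap C_2$ is a single edge $uv$.

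\textbf{Step 1: every component attaches to both cycles.} Suppose $A$ and $B$ are two distinct components of $G-V(C_1\cup C_2)$. Since $C_1$ is peripheral, $G-V(C_1)$ is connected. If $A$ had neighbours only on $C_1$, then $A$ would be a whole component of $G-V(C_1)$. That component would miss $C_2-V(C_1)$, which is nonempty, and this contradicts connectivity. So $A$ sends edges to $C_2-V(C_1)$, and symmetrically to $C_1-V(C_2)$; the same holds for $B$. Each vertex of $C_i$ has exactly one edge off $C_i$, because $C_i$ is induced and $G$ is cubic. It follows that the attachment edges of $A$ and $B$ land at distinct vertices of the cycles.

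\textbf{Step 2: the separating curve.} Contract $F_1$ and $F_2$ to points $f_1$ and $f_2$. By Step~1 the plane contains a closed curve through $f_1$, then $A$, then $f_2$, then $B$. Consequently, along $C_1$, the attachment vertices of $A$ and those of $B$ are interleaved by at least two \emph{gaps}, and likewise along $C_2$. Now pick a simple closed curve $\gamma$ in the plane that does the following:
\begin{itemize}
\item it runs through the interior of $F_1$, crosses $C_1$ at an edge lying in a gap between an $A$-attachment and a $B$-attachment, and travels through the faces lying between $A$ and $B$ without crossing any edge of $A$, $B$, or any attachment edge;
\item it crosses $C_2$ at an edge in a gap, passes through $F_2$, and returns symmetrically.
\end{itemize}
This curve $\gamma$ crosses exactly two edges of $C_1$ and two of $C_2$. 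In the case of a shared edge $uv$, a crossing at $uv$ counts for both cycles, so $\gamma$ crosses even fewer edges. The edges crossed by $\gamma$ therefore form an edge-cut $\delta(X)$ with $|\delta(X)|\le 4$. One side $X$ contains $A$ together with one arc of $C_1$ and one arc of $C_2$; the other side contains $B$ and the complementary arcs.

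\textbf{Step 3: both sides contain cycles.} On the $A$-side, take a path in $A$ between an attachment on $C_1$ and an attachment on $C_2$, and a second such path, or a path in $A$ joining two attachments on the same arc. Combining these with the arcs of $C_1$ and $C_2$ on that side gives a cycle; since $A$ attaches to both arcs, at least two attachments exist. The same argument gives a cycle on the $B$-side. So $\delta(X)$ is a cut of size at most $4$ separating two cycles, contradicting C5EC.

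The main obstacle is Step~2: making rigorous that $\gamma$ can be routed through faces so that it crosses only cycle edges and exactly two per cycle. What I would try first is to work in the plane graph $H=C_1\cup C_2\cup A\cup B$ plus the attachment edges, after discarding other components. I would identify the two faces of $H$ other than $F_1$ and $F_2$ whose boundaries contain both $A$-vertices and $B$-vertices; the Jordan curve theorem applied to the $f_1Af_2B$ curve guarantees these exist. In each such face, the boundary between the $A$-part and the $B$-part consists of subpaths of $C_1$ and of $C_2$, and $\gamma$ crosses one edge of each. Any other components of $G-V(C_1\cup C_2)$ lie inside faces of $H$ and can be absorbed into whichever side contains them, without adding cut edges.
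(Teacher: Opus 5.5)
Your skeleton matches the paper's proof. Assuming a second component $B$, you produce a cut made of two edges of $C_1$ and two of $C_2$ that separates $N_G[V(A)]$ from $N_G[V(B)]$, and you find a cycle on each side. Your Step~1 is correct, and more carefully argued than in the paper.

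The gap is in Step~2, which is the only place planarity is used. There are two problems.
\begin{enumerate}
\item The inference from a single closed curve through $f_1$, $A$, $f_2$, $B$ to ``the $A$- and $B$-attachments on $C_1$ are separated by two gaps'' does not follow. That curve uses one path through $A$ and one through $B$, and other vertices of $A$ and $B$ may lie on either side of it. So it constrains only one attachment point of each on $C_1$.
\item Your routing of $\gamma$ inside a face of $H=C_1\cup C_2\cup A\cup B$ ignores the other objects spanning from $C_1$ to $C_2$. A third component $D$ (which by your Step~1 attaches to both cycles) may lie in that face and cut it into pieces, and so may a single edge joining a vertex of $C_1$ to a vertex of $C_2$. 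Then $\gamma$ avoids such an object only if the $C_1$-edge and the $C_2$-edge it crosses lie on the same side of it. Your sentence that other components ``can be absorbed into whichever side contains them, without adding cut edges'' presupposes exactly this choice, and direct $C_1$--$C_2$ edges are not addressed at all. Without that choice, $\gamma$ may cross extra edges and the bound $|\delta(X)|\le 4$ is lost.
\end{enumerate}

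The missing idea, which is how the paper closes this step, is an interval claim. The vertices of $C_1$ whose third neighbour lies in $A$ form a contiguous segment $x_1\dots x_i$ of $C_1$, and likewise there is a segment $y_1\dots y_j$ on $C_2$.
\begin{itemize}
\item \emph{Proof of the claim.} If it fails, there are $a_1,b_1,a_2,b_2$ in this cyclic order on the facial cycle $C_1$ with $a_1,a_2$ attached to $A$ and $b_1,b_2$ not. Take an $a_1$--$a_2$ path through $A$. Take a $b_1$--$b_2$ path that avoids $A$ and is internally disjoint from $C_1$, running it through $C_2-V(C_1)$ and using Step~1 for any components met at $b_1,b_2$. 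These two paths are disjoint, lie outside the face, and have interleaved ends, which contradicts planarity.
\item \emph{No curve is needed afterwards.} Set $X=V(A)\cup\{x_1,\dots,x_i\}\cup\{y_1,\dots,y_j\}$. Cubicity directly gives $\delta(X)=\{x_0x_1,x_ix_{i+1},y_0y_1,y_jy_{j+1}\}$. All other components and all direct $C_1$--$C_2$ edges then automatically lie on the other side.
\item \emph{A smaller gap in Step~3.} Knowing only that $A$ attaches to both arcs gives a single path between them, not a cycle. You need that an acyclic $A$ on $k$ vertices sends $3k-2(k-1)=k+2\ge 3$ edges to $C_1\cup C_2$. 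Then two of these edges land on the same segment and close a cycle with it. The paper does the equivalent argument using leaves of the tree.
\end{itemize}
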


\begin{proof}
Suppose not, i.e.~that $G-V(C_1\cup C_2)$ is disconnected. Let $A$ be some component of $G-V(C_1\cup C_2)$. Since $C_1$ and $C_2$ are peripheral cycles in $G$, they are each non-separating and thus there is at least one edge from $A$ to $C_1$ and at least one edge from $A$ to $C_2$. 

We claim that the vertices of $C_1$ that have a neighbor in $A$ are consecutive vertices on the cycle $C_1$. 
Suppose, for a contradiction, that there are vertices $a_1$ and $a_2$ on $C_1$ that have neighbors in $A$ and vertices $b_1$ and $b_2$ on $C_1$ whose neighbors are not in $A$, with $C_1 = a_1\dots b_1\dots a_2\dots b_2\dots$. 

Consider a planar embedding $\Pi_0$ of $G$. By Theorem~\ref{thm:Tutte}, the facial cycles in $\Pi_0$ are exactly the peripheral cycles of $G$. Let $F$ be the face whose boundary is the cycle $C_1$. 

There are paths on $C_1$ from $a_1$ to $b_1$, $b_1$ to $a_2$, $a_2$ to $b_2$, and $b_2$ to  $a_1$. Since $A$ is a connected component of $G-V(C_1\cup C_2)$, there is a path from $a_1$ to $a_2$ whose interior vertices are all in $A$. Finally, there is a path from $b_1$ to $C_2$ and from $b_2$ to $C_2$ in $G$, none of whose interior vertices are in $A$. Thus, there is a path from $b_1$ to $b_2$ in $G$. 
This contradicts the planarity of $G$ and completes the proof of the claim. 

Now, the vertices of $C_1$ that are adjacent to vertices in $A$ must be consecutive on $C_1$, say $x_1, x_2, \dots, x_i$. By a similar argument, the vertices of $C_2$ that are adjacent to vertices in $A$ must be consecutive on $C_2$, say $y_1, y_2, \dots, y_j$. We will call these sequences of vertices the $A$\emph{-interval} on $C_1$ and $C_2$, respectively. 

Let $x_0$ and $x_{i+1}$ be the neighbors of $x_1$ and $x_i$ on $C_1$ that are not in the $A$-interval on $C_1$. Similarly, let $y_0$ and $y_{j+1}$ be the neighbors of $y_1$ and $y_j$ on $C_2$ that are not in the $A$-interval on $C_2$. We show now that the edge set $S = \{x_0x_1, x_ix_{i+1},y_0y_1, y_jy_{j+1}\}$ separates two cycles in $G$. 

Let $B$ be any other component of $G-V(C_1\cup C_2)$. Note that any path from a vertex of $A$ to a vertex of $B$ must contain a vertex of the $A$-interval on $C_1$ or $C_2$. Since the graph $G$ is cubic, the $A$-interval and $B$-interval are disjoint. Thus, the set $S$ separates $N_G[V(A)]$ and $N_G[V(B)]$. We denote these subgraphs by $N_A$ and $N_B$, respectively. Note that the $A$-intervals on $C_1$ and $C_2$ are both contained in $N_A$. 

If $A$ (respectively, $B$) contains a cycle, then clearly $N_A$ (resp.~$N_B$) does as well. 
If $A$ does not contain a cycle, then consider some vertex $a\in V(A)$ that has at most one neighbor in $A$. Since $V(A)$ induces a tree, such a vertex must exist. If $a$ has two neighbors $u$ and $v$ in $C_1$, then let $uPv$ be the path on $C_1$ from $u$ to $v$ whose vertices are all contained in the $A$-interval on $C_1$. The path $uPv$ together with the edges $au$ and $av$ form a cycle in $N_A$. If $a$ has exactly one neighbor, say $u$, in $C_1$ and exactly one neighbor in $C_2$, then there must exist some other vertex, say $a^*$, with at most one neighbor in $A$. If $a^*$ has two neighbors on one of $C_1$ or $C_2$, then we can construct a cycle in $N_A$ as above. If $a^*$ has exactly one neighbor, say $w$ on $C_1$ and exactly one neighbor on $C_2$, then we construct a cycle in $N_A$ as follows. Let $aQa^*$ be the path in $A$ from $a$ to $a^*$ and let $uPw$ be the path on $C_1$ from $u$ to $w$ whose vertices are all contained in the $A$-interval on $C_1$. Then, the paths $aQa^*$ and $uPw$ together with the edges $au$ and $a^*w$ form a cycle in $N_A$. 

A similar argument shows that there exists a cycle in $N_B$. Since $S$ separates $N_A$ and $N_B$, it is a set of at most four edges separating two cycles in $G$. This contradicts $G$ being C5EC. Thus, $G-V(C_1\cup C_2)$ is connected. 
\end{proof}

A family of cycles $\mathcal{C}$ is said to be a \emph{peripheral family} of cycles if its members are pairwise disjoint, nonadjacent, and $G-\bigcup\{V(C)\mid C\in \mathcal{C}\}$ is connected. In a particular embedding $\Pi$ of genus $g(\Pi)$, there is a limit on the maximum size of a peripheral family of $\Pi$-nonfacial cycles in $G$. 

\begin{lemma}
    [\cite{MoharCounterexample}] Let $\mathcal{C}$ be a peripheral family of cycles of $G$ and $\Pi$ an embedding of $G$. If none of the cycles in $\mathcal{C}$ is $\Pi$-facial, then $|\mathcal{C}|\leq g(\Pi)$. 
    \label{lem:numCycles}
\end{lemma}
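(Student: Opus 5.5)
The plan is a $\mathbb{Z}_2$-homology argument. Set $\mathcal{C}=\{C_1,\dots,C_m\}$ and suppose, for a contradiction, that $m>g(\Pi)$. Let $S$ be the surface of genus $g=g(\Pi)$, so that $H_1(S;\mathbb{Z}_2)$ has dimension $2g$. I will show that the classes $[C_1],\dots,[C_m]$ span an isotropic subspace for the intersection form and are linearly independent. Because the intersection form on $H_1(S;\mathbb{Z}_2)$ is nondegenerate, an isotropic subspace has dimension at most $g$, and this gives $m\le g$.

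The isotropy step comes first and is easy. The cycles in $\mathcal{C}$ are pairwise disjoint, so their $\mathbb{Z}_2$ intersection numbers vanish. Each self-intersection also vanishes, since $S$ is orientable and every simple closed curve has a two-sided annular neighbourhood. So $\langle [C_i],[C_j]\rangle=0$ for all $i,j$.

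Linear independence is the main step. Suppose some nonempty subfamily $\mathcal{D}\subseteq\mathcal{C}$ has $\sum_{C\in\mathcal{D}}[C]=0$. Then $\bigcup\mathcal{D}$ is the boundary, mod 2, of a union $X$ of faces of $\Pi$. Put differently, one can 2-colour the faces of $\Pi$ so that an edge separates faces of different colours exactly when it lies on some cycle of $\mathcal{D}$. Both colour classes must be nonempty, and here I use that no $C\in\mathcal{D}$ is $\Pi$-facial. If every face had the same colour, no edge would separate faces of different colours, contradicting $\mathcal{D}\neq\emptyset$.

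Now consider $H=G-\bigcup_{C\in\mathcal{C}}V(C)$, which is connected by the definition of a peripheral family. Every face touches some vertex outside the cycles of $\mathcal{D}$; I expect this to be the delicate point. Faces whose boundaries avoid the edges of $\mathcal{D}$ are linked to one another through edges and vertices of $H$, together with the edges joining $H$ to the cycles. Walking around such a vertex or edge never crosses an edge of $\bigcup\mathcal{D}$, so these faces all receive the same colour. Each cycle $C\in\mathcal{D}$ is disjoint from the other cycles and nonadjacent to them, so every edge leaving $C$ goes into $H$. Consequently, at every vertex of $C$ the faces on both sides of $C$ are adjacent, via edges not in $\bigcup\mathcal{D}$, to faces that meet $H$. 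The faces near the two sides of $C$ therefore get the colour of the $H$-faces. But the two faces on either side of an edge of $C$ must get different colours, which is a contradiction.

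The main obstacle is making this rigorous. The faces around a vertex of $C$ change colour only when crossing an edge of $C$, so the two sides of $C$ carry opposite colours locally. The task is to show that both sides can be reached from faces incident with $H$ without crossing $\bigcup\mathcal{D}$. Here the connectivity of $H$, together with each cycle of $\mathcal{D}$ having a neighbour in $H$, is what should force all the non-$\mathcal{D}$-separated faces into a single colour class. I would first handle the cubic case, where each vertex of $C$ has exactly one edge into $H$, and then generalise. An alternative, possibly cleaner, route is to cut $S$ along all the $C_i$. Connectivity of $H$ plus the nonadjacency condition should imply that the cut surface stays connected. The surface then has $2m$ boundary components and Euler characteristic $2-2g$, and connectedness forces its genus $g'$ to satisfy $2-2g'-2m=2-2g$, so $m\le g$. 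I would likely present this second argument as the proof.
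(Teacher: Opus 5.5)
Both of your frameworks would work. Independence plus isotropy of $[C_1],\dots,[C_m]$ in $H_1(S;\mathbb{Z}_2)$ gives $m\le g(\Pi)$. So does the count $2-2g(\Pi)=2-2g'-2m$, once the cut surface is known to be connected. The paper only quotes this lemma from Mohar's work, so there is no in-paper proof to compare with.

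The gap is that the one step carrying the hypotheses is left unproved. In fact your argument never genuinely uses those hypotheses, although the lemma is false without them:
\begin{itemize}
\item You say non-faciality makes both colour classes nonempty, but that already follows from $\mathcal{D}\neq\emptyset$.
\item For the cutting route you attribute connectedness of the cut surface to connectivity of $H$ plus nonadjacency. That cannot be enough: a $\Pi$-facial peripheral cycle $C$ with $G-V(C)$ connected satisfies both, and cutting along it detaches a disk.
\item Your local claim is false. You assert that at every vertex of $C$ both sides of $C$ are joined to $H$-faces across edges not in $\bigcup\mathcal{D}$. In the cubic case, the face in the corner between the two edges of $C$ at $v$ meets only edges of $C$ there.
\item You also need each cycle to be induced. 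This is what ``peripheral'' should add, though the definition as written in the paper omits it. Without it the statement fails. Take $C=v_1v_2v_3v_4$ in the plane, with the chord $v_1v_3$ drawn inside and a vertex $x$ outside joined to $v_2$ and $v_4$. Then $\{C\}$ meets the stated conditions and $C$ is nonfacial, yet $g=0$.
\end{itemize}

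The missing step is: \emph{every face of $\Pi$ contains a vertex of $H=G-\bigcup_{C\in\mathcal{C}}V(C)$.} Suppose a facial walk $W$ has all its vertices on the cycles. Since the cycles are induced, pairwise disjoint and nonadjacent, the only edges among these vertices are cycle edges. Consecutive edges of $W$ share a vertex, so all edges of $W$ lie on a single cycle $C$. A facial walk reverses direction only at a vertex of degree $1$, so $W$ runs around $C$ in one direction. Each dart lies on exactly one facial walk, exactly once, so $W$ closes after one circuit. Thus $C$ bounds a face, contradicting the hypothesis.

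Now link the faces. Every edge at a vertex of $H$ is off the cycles, so the faces at all corners of a vertex of $H$ are linked across non-cycle edges. An edge of $H$ links the corners at its two ends, and $H$ is connected. Since every face meets $H$, all faces are linked across edges not in $\bigcup_{C\in\mathcal{C}}E(C)$. This gives connectedness of $S$ cut along the $C_i$, which completes your second route. Equivalently, it shows your $2$-colouring is constant, forcing $E(\mathcal{D})=\emptyset$, which completes your first route.
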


Similarly, there is an upper bound on the number of disjoint nonhomotopic cycles on a particular surface. Two cycles of a graph embedded in a surface are \emph{homotopic} if they are freely homotopic as closed curves on the surface. 

\begin{proposition}
    [\cite{MoharThomassen}] \label{prop:disjointCycles} Let $G$ be a graph and $\Pi$ an embedding of $G$. Let $C_1,C_2,\dots,C_k$ be pairwise disjoint, $\Pi$-noncontractible, pairwise $\Pi$-nonhomotopic cycles of $G$. Then $$
k\leq\begin{cases}
      g(\Pi) & g(\Pi)\leq 1 \\
      3g(\Pi)-3 & g(\Pi)\geq 2~.
    \end{cases}
$$
\end{proposition}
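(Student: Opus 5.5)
The plan is to argue topologically on the closed orientable surface ${\mathbb S}_g$, $g=g(\Pi)$, in which $\Pi$ embeds $G$. The cycles $C_1,\dots,C_k$ are disjoint simple closed curves there. I would cut ${\mathbb S}_g$ along all of them at once. This gives a compact orientable surface $S'$ with components $S_1,\dots,S_m$ and exactly $2k$ boundary circles in total, since each $C_i$ is two-sided and contributes two boundary circles. Cutting along circles does not change the Euler characteristic. Writing $h_j$ for the genus of $S_j$ and $b_j\ge 1$ for its number of boundary components, we get
\[
2-2g=\chi({\mathbb S}_g)=\sum_{j=1}^m \chi(S_j)=\sum_{j=1}^m (2-2h_j-b_j),\qquad \sum_{j=1}^m b_j = 2k .
\]

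The key step is to rule out components with $\chi(S_j)\ge 0$. Since $b_j\ge1$ and $S_j$ is orientable, such a component is either a disk ($h_j=0,b_j=1$) or an annulus ($h_j=0,b_j=2$).
\begin{itemize}
\item A disk component would mean that some $C_i$ bounds a disk in ${\mathbb S}_g$. Then $C_i$ would be contractible, contrary to the hypothesis.
\item For an annulus component, look at where its two boundary circles come from. If they come from distinct cycles $C_i$ and $C_{i'}$, the annulus gives a free homotopy between them, contradicting pairwise nonhomotopy. If both come from the same $C_i$, then gluing back yields the whole surface, which is a torus, so $g=1$.
\end{itemize}
This is the step I expect to need the most care. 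One has to make sure that the cut pieces really are the abstract surfaces described, and that a boundary of a piece bounding a disk or cobounding an annulus translates into contractibility or homotopy in ${\mathbb S}_g$. Both facts are standard for disjoint simple closed curves.

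With this in hand, the cases follow. If $g=0$, every simple closed curve on the sphere is contractible, so $k=0$. If $g=1$, the sum of the $\chi(S_j)$ is $0$. There are no disks, and I claim there are also no pieces of negative Euler characteristic. Indeed, every noncontractible simple closed curve on the torus is nonseparating, and cutting the torus along $C_1$ already gives an annulus; the remaining cycles are disjoint essential curves inside it, hence parallel to its core. So all pieces are annuli, and any two of the $C_i$ would be homotopic. Hence $k\le 1=g$.

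If $g\ge 2$, the torus situation cannot occur, so every component satisfies $-\chi(S_j)=2h_j+b_j-2\ge 1$. I then check that $b_j\le 3(-\chi(S_j))$ in each case:
\begin{itemize}
\item if $h_j=0$, then $b_j\ge 3$, so $3b_j-6\ge b_j$;
\item if $h_j\ge1$, then $3(2h_j+b_j-2)\ge b_j+2b_j+6h_j-6\ge b_j$.
\end{itemize}
Summing over all components gives $2k=\sum_j b_j\le 3\sum_j(-\chi(S_j))=3(2g-2)$. Hence $k\le 3g-3$, which is the claimed bound.
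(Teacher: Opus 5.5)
Your proof is correct and complete. The paper gives no proof of its own but cites Mohar--Thomassen, and your argument is the standard one for this bound: cut along the curves and count boundary circles against Euler characteristic, which is the pants-decomposition count. The only place you work harder than needed is the torus case, where the absence of disk pieces together with $\sum_j\chi(S_j)=0$ already forces every piece to be an annulus, with no need for the separate argument about nonseparating curves.
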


A graph being cubic, C5EC and planar guarantees that in a toroidal embedding $\Pi$, the disjoint $\Pi$-nonfacial peripheral cycles cannot be too far apart. In fact, they must be joined by a single edge. 

\begin{lemma}
Let $G$ be a cubic C5EC planar graph and $\Pi$ be a toroidal embedding of $G$. If two disjoint peripheral cycles, $C_1$ and $C_2$ of $G$ are $\Pi$-nonfacial, then there is exactly one edge joining a vertex of $C_1$ to a vertex of $C_2$ in $G$. 
\label{lem:one edge}
\end{lemma}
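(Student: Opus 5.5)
The argument splits into two parts: at most one edge joins $C_1$ and $C_2$, and at least one does. Both parts reduce to Lemma~\ref{lem:numCycles} with $g(\Pi)=1$, which says that no peripheral family of $\Pi$-nonfacial cycles can have two members.

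\emph{Existence.} Suppose no edge joins $C_1$ and $C_2$. The two cycles are disjoint by hypothesis, so they are also nonadjacent. By Lemma~\ref{lem:G-c1c2 conn}, $G-V(C_1\cup C_2)$ is connected. Hence $\{C_1,C_2\}$ is a peripheral family of two $\Pi$-nonfacial cycles. Lemma~\ref{lem:numCycles} then gives $2\le g(\Pi)=1$, a contradiction. So at least one edge joins $C_1$ and $C_2$.

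\emph{Uniqueness.} Suppose two distinct edges $e=a_1b_1$ and $f=a_2b_2$ join the cycles, with $a_i\in V(C_1)$ and $b_i\in V(C_2)$. Since $G$ is cubic, each vertex of $C_1$ has exactly one neighbour off $C_1$. Therefore $a_1\ne a_2$, and likewise $b_1\ne b_2$.

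We first try to use the faces of a planar embedding $\Pi_0$ of $G$. By Theorem~\ref{thm:Tutte}, $C_1$ and $C_2$ are faces of $\Pi_0$. Splitting $C_1$ at $a_1,a_2$ and $C_2$ at $b_1,b_2$ gives four arcs. Together with $e$ and $f$, these arcs form two cycles $D,D'$ that bound the region between the two faces. Planarity forces the rest of the graph to attach on either side. A direct argument would want $D$ or $D'$ to be a face of $\Pi_0$, that is, peripheral. This does not follow immediately, so we instead derive a contradiction from cyclic connectivity.

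Removing $V(C_1\cup C_2)$ leaves a connected graph by Lemma~\ref{lem:G-c1c2 conn}. In $\Pi_0$, the region bounded by $D$ and the region bounded by $D'$ are separated by $C_1\cup C_2\cup\{e,f\}$. So all the remaining vertices lie in one region, say the one inside $D'$. The other side, inside $D$, then contains no vertices other than those of $D$. Its interior consists only of chords of $D$, and these must be edges between $C_1$ and $C_2$.

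Now take the pair $e,f$ to be consecutive, so that there are no further chords inside $D$. Then $D$ is an induced cycle whose removal leaves the rest connected, so $D$ is a face of $\Pi_0$. Hence $D$ is a cycle of length at most four.

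Consider the edge-cut around $V(C_1\cup C_2)$. Let $C_1\cup C_2$ have $m$ vertices in total. If $k$ edges join $C_1$ and $C_2$, then exactly $m-2k$ edges leave $C_1\cup C_2$, because each vertex has exactly one edge off its own cycle. If there were only $k\ge 2$ connecting edges, we compare with the cycle $D$ instead. The cycle $D$ of length $\ell\le 4$ has at most $\ell-2\le 2$ edges leaving it. This cut separates $D$ from a cycle in the rest of $G$, and the rest is large since $C_1,C_2$ are not faces. This contradicts C5EC.

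More simply, C5EC cubic planar graphs have girth at least $5$, so a face $D$ of length at most $4$ is already impossible. This forces $|e\cap f|$-adjacency to fail, and hence $k\le 1$.

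The main obstacle is the claim that the region between $e$ and $f$ on the empty side is a face of $\Pi_0$. This needs care in choosing the consecutive pair and in arguing that the empty side has no vertices. If $D$ is long rather than short, the approach needs a replacement. In that case we would look for a 4-edge-cut consisting of $e$, $f$, and two edges of $C_1\cup C_2$ near them, separating the cycle $D$ from the remaining graph; by Lemma~\ref{lem:G-c1c2 conn} the rest contains a cycle. This would contradict C5EC and complete the proof.
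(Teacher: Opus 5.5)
\textbf{Verdict: genuine gap at the decisive step, but easily repaired.} Your existence half is the same as the paper's. The uniqueness half breaks down exactly where the contradiction must come from. You choose consecutive connecting edges $e,f$ so that the side bounded by $D$ contains no vertex and no further $C_1$--$C_2$ edge. You then assert ``hence $D$ is a cycle of length at most four'' without giving a reason, and your last paragraph concedes that a long $D$ has not been ruled out.

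The fallback you sketch cannot work as stated. A cut containing $e$ and $f$ cannot separate the cycle $D$, because $D$ uses both edges. The intermediate cut-counting paragraph is also incorrect. A 4-cycle $a_1a_2b_2b_1$ in a cubic graph has four edges leaving it, not at most $\ell-2$. And ``the rest is large since $C_1,C_2$ are not faces'' confuses $\Pi$ with $\Pi_0$: $C_1$ and $C_2$ are faces of $\Pi_0$.

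\textbf{The missing step.} Let $x$ be an internal vertex of the $C_1$-arc of $D$. Its two $C_1$-edges lie on $D$, and the face $C_1$ occupies the angle between them. So the third edge at $x$ enters the open region bounded by $D$. That region contains no vertices, so this edge is a chord of $D$. It is not a chord of $C_1$, since peripheral cycles are induced. It is not another $C_1$--$C_2$ edge, by the choice of $e,f$. Hence both arcs are single edges, and $D=a_1a_2b_2b_1$ is an induced 4-cycle whose four outgoing edges form a 4-cut.

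\textbf{Why this contradicts C5EC.} The graph $G-V(D)$ has $n-4$ vertices and $3n/2-8$ edges, so it contains a cycle whenever $n\ge 8$. Here $n\ge 8$ holds: $G$ has two disjoint cycles, which $K_4$ does not, and the prism (the only cubic planar graph on six vertices) is not C5EC. So the 4-cut separates two cycles, contradicting C5EC.

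\textbf{Comparison with the paper.} With this repair your argument is valid and genuinely different from the paper's. The paper uses a planarity crossing argument to show that the attachments of $A=G-V(C_1\cup C_2)$ form an interval on each $C_i$. This makes the $C_1$--$C_2$ edges form a ladder, which the paper then cuts off from $N_G[V(A)]$ using the four cycle edges $x_0x_1,x_ix_{i+1},y_0y_1,y_jy_{j+1}$. Your version works directly in the annulus between the two faces of $\Pi_0$. It needs only a single empty square, i.e., that the girth is at least $5$.
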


\begin{proof}
Let $C_1$ and $C_2$ be disjoint peripheral cycles in $G$ that are both $\Pi$-nonfacial. By Lemma~\ref{lem:G-c1c2 conn}, $G-V(C_1\cup C_2)$ is connected and therefore, by Lemma~\ref{lem:numCycles}, $C_1$ and $C_2$ must be adjacent. Suppose that there are two distinct edges, $uv$ and $wz$ in $G$ with $u,w\in V(C_1)$ and $v,z\in V(C_2)$. Note that $u$ and $w$ (similarly, $v$ and $z$) must be distinct vertices because $G$ is cubic and each vertex has two neighbors in $C_1$ (resp.~$C_2$). We argue that we can choose these two edges so that $u$ and $w$ are adjacent on $C_1$ and $v$ and $z$ are adjacent on $C_2$. 

Let $A = G-V(C_1\cup C_2)$. Suppose that there are vertices $a_1$ and $a_2$ with neighbors in $A$ and vertices $b_1$ and $b_2$ with neighbors on $C_2$ such that $C_1 = a_1\dots b_1\dots a_2\dots b_2\dots$. Then, as in the proof of Lemma~\ref{lem:G-c1c2 conn}, there is a path from $a_1$ to $a_2$ whose interior vertices are all in $A$, and a path from $b_1$ to $b_2$, none of whose interior vertices are in $A$, contradicting the planarity of $G$. 
Thus, the vertices of $C_1$ that have neighbors in $A$ are all consecutive vertices on $C_1$ and, by similar argument, the vertices of $C_2$ that have neighbors in $A$ are all consecutive vertices on $C_2$. 

In addition, this means that the vertices of $C_1$ with neighbors in $C_2$ are all consecutive on $C_1$ and, similarly, the vertices of $C_2$ with neighbors in $C_1$ are all consecutive on $C_2$. 

As shown in the proof of Lemma~\ref{lem:G-c1c2 conn}, the graph induced by $N_G[V(A)]$ contains a cycle. Now, consider the set of edges $S = \{x_0x_1,x_ix_{i+1},y_0y_1,y_jy_{j+1}\}$, where $x_1,x_i\in V(C_1)$ have neighbors in $A$, $x_0,x_{i+1}\in V(C_1)$ have neighbors in $C_2$ and, similarly, $y_1y_j\in V(C_2)$ have neighbors in $A$ and $y_0,y_{j+1}\in V(C_2)$ have neighbors in $C_1$. Then, the set $S$ separates $N_G[V(A)]$ from the subgraph $H$ induced by the endpoints of edges connecting a vertex in $C_1$ with a vertex in $C_2$. Now, if there are at least two such edges $uv$ and $wz$ in $G$ with $u,w\in V(C_1)$ and $v,z\in V(C_2)$, then $H$ contains a cycle. Thus, $S$ is a set of at most four edges that separates two cycles in $G$, contradicting $G$ being C5EC. Therefore, there must be exactly one edge joining a vertex of $C_1$ and a vertex of $C_2$, as claimed. 
\end{proof}

\section{A framework for embeddings of cubic graphs}
\label{section:framework}

In this section, we provide a new method for describing and analyzing the embeddings of a cubic graph using the set of possible rotation systems. Given a cubic graph $G$, fix an embedding $\hatPi$ of $G$ and call this a \emph{base embedding} of $G$. Let $\hatpi_v$ be the local rotation in $\hatPi$ for each vertex $v\in V(G)$. Now, consider another embedding $\Pi$ of $G$. For each vertex $v\in V(G)$, we say that $v$ is a \emph{$\Pi$-unstable vertex} with respect to $\hatPi$ if the local rotation $\pi_v \neq \hatpi_v$. If a vertex is not $\Pi$-unstable, then it is \emph{$\Pi$-stable}. We define the \emph{$\Pi$-unstable set} with respect to $\hatPi$ to be the set $\Unst_\Pi = \{v\in V(G) \mid v \text{~is~} \Pi\text{-unstable with respect to }\hatPi\}$. See Figure \ref{fig:unstableEx1} for an example. 

\begin{figure}[!h]
    \centering
    \begin{subfigure}[b]{0.35\textwidth}
         \centering
         \includegraphics[width=0.67\textwidth]{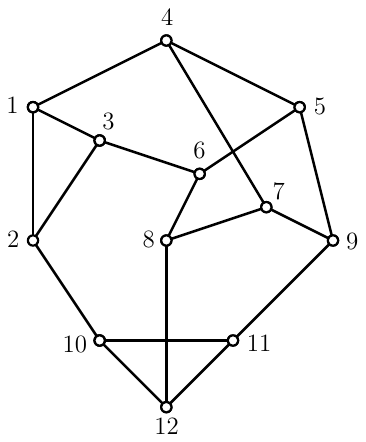}
         \caption{The base embedding $\hatPi$ of $G$}
         \label{fig:unstableExbase}
     \end{subfigure}
     \hspace{1cm}
     \begin{subfigure}[b]{0.35\textwidth}
         \centering
         \includegraphics[width=0.7\textwidth]{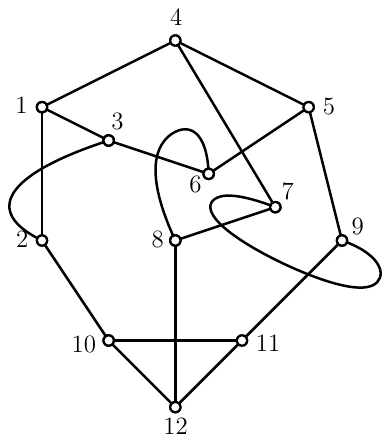}
         \caption{The embedding $\Pi$ of $G$}
         \label{fig:unstableEx2}
     \end{subfigure}
    \caption{A base embedding of $G$ and an embedding $\Pi$ with $\Pi$-unstable set $\Unst_\Pi = \{2, 6, 7, 9\}$. The local rotation at each vertex is determined by the cyclic clockwise order as shown in the figure.}
    \label{fig:unstableEx1}
\end{figure}

Now, define a \emph{$\Pi$-unstable edge} (with respect to $\hatPi$) to be an edge with exactly one $\Pi$-unstable endpoint with respect to $\hatPi$. In other words, it is an edge $e=uv\in E(G)$ connecting a $\Pi$-stable vertex $u$ and a $\Pi$-unstable vertex $v$. 
Using this concept of stable and unstable edges, we describe a method of tracing the faces of the embedding $\Pi$. We add a signature $f:E(G)\to\{-1,1\}$ to each of the edges of the graph. For an embedding $\Pi$ and an edge $e\in E(G)$, let $f(e)=1$ if $e$ is a $\Pi$-stable edge, and $f(e)=-1$ if $e$ is $\Pi$-unstable. When tracing a $\Pi$-facial walk, we use a variable $t_i$ to indicate the local rotation used at the next vertex in the facial walk. Initially, we begin at some vertex in the facial walk and set $t_0=1$. Then, following each edge $e$ from $u$ to $v$ in the facial walk, we set $t_{i+1}=t_if(e)$. The vertex $v$ is followed in the facial walk by $\hatpi_v(e)$ if $t_{i+1}=1$ and by ${\hatpi_v}^{-1}(e)$ if $t_{i+1}=-1$. Informally, we can think of the facial walk crossing to the other side of a $\Pi$-unstable edge in the embedding $\hatPi$ each time such an edge is encountered. If we think of a $\Pi$-facial walk ``leaving'' the face by crossing one of the unstable edges, and ``entering'' the face using another, it is clear that any face of the base embedding whose boundary contains a $\Pi$-unstable edge must contain an even number of $\Pi$-unstable edges.

We note the similarity between this method of tracing a $\Pi$-facial walk and a Petrie walk \cite{petrie}, also called a left-right walk, for example in \cite{leftrightwalks}, a walk whose edges alternate turning to the left edge and to the right edge of the current edge in the local rotation. The face tracing method described above is a variation of a Petrie walk in which the direction taken changes only after traversing a $\Pi$-unstable edge, rather than after traversing every edge.

Consider a face $F$ in the base embedding $\hatPi$ of a graph $G$. If $F$ is also a face in an embedding $\Pi$ of $G$, then $F$ is a \emph{$\Pi$-stable} face with respect to $\hatPi$, regardless of whether the face is traversed in the same direction in both $\Pi$ and $\hatPi$. Otherwise, $F$ is a \emph{$\Pi$-unstable face}. Note that the $\Pi$-unstable faces are those that contain at least one $\Pi$-unstable edge.

In an embedding $\Pi$ of a cubic graph, there are four possible ways that the faces of the embedding can appear at a given vertex $v\in V(G)$. If $v$ appears three times on a single $\Pi$-face, we say that $v$ is of Type 1 in $\Pi$. Each of the edges incident with $v$ appears twice on this facial walk and there are two different ways this can occur: the two appearances of each edge in the facial walk can be consecutive ($e_1,v,e_2,\dots, e_2,v,e_3,\dots, e_3,v,e_1,\dots$) or nonconsecutive ($e_1,v,e_2,\dots, e_3,v,e_1,\dots, e_2,v,e_3,\dots$). We will call these two types Type 1A and Type 1B, respectively. If $v$ appears twice on a single $\Pi$-face and once on another, we say that $v$ is of Type 2 in $\Pi$. And if $v$ appears on three distinct $\Pi$-faces, we say that $v$ is of Type 3 in $\Pi$. 

Now, in each of these cases, we can describe the changes to the facial structure at $v$ when the local rotation at $v$ is changed, but the local rotation at every other vertex stays the same. These effects are shown in Figure \ref{fig:cubicv}: a vertex of Type 3 becomes a vertex of Type 1B (Figure \ref{fig:type3}), a vertex of Type 1B becomes a vertex of Type 3 (Figure \ref{fig:type1B}), and vertices of Type 1A and Type 2 remain vertices of the same type (Figures \ref{fig:type1A} and \ref{fig:type2}, respectively). 

\begin{figure}[ht]
    \centering
    \begin{subfigure}[b]{0.4\textwidth}
         \centering
         \includegraphics[width=\textwidth]{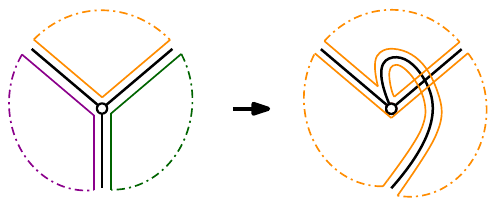}
         \caption{Type 3 becomes Type 1B}
         \label{fig:type3}
     \end{subfigure}
     \hspace{1cm}
     \begin{subfigure}[b]{0.4\textwidth}
         \centering
         \includegraphics[width=\textwidth]{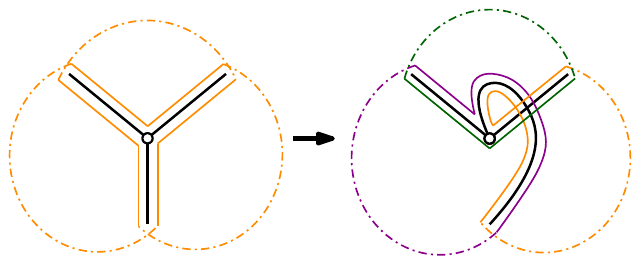}
         \caption{Type 1B becomes Type 3}
         \label{fig:type1B}
     \end{subfigure}
     \vspace{7mm}

     \begin{subfigure}[b]{0.38\textwidth}
         \centering
         \includegraphics[width=\textwidth]{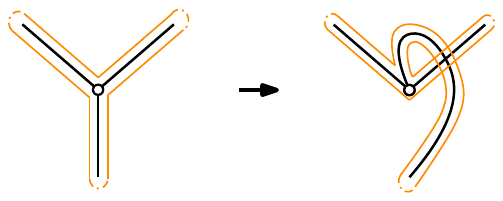}
         \caption{Type 1A remains Type 1A}
         \label{fig:type1A}
     \end{subfigure}
     \hspace{1cm}
     \begin{subfigure}[b]{0.4\textwidth}
         \centering
         \includegraphics[width=\textwidth]{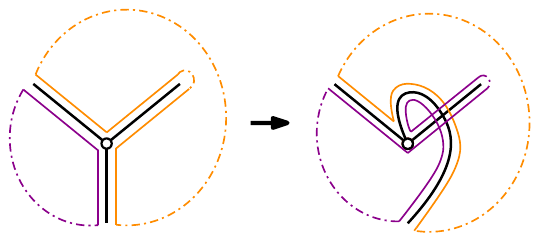}
         \caption{Type 2 remains Type 2}
         \label{fig:type2}
     \end{subfigure}
    \caption{Each of the four possible face structures at a vertex $v$ in an embedding of a cubic graph and the effect of changing the local rotation only at $v$ to obtain a new embedding}
    \label{fig:cubicv}
\end{figure}

By changing the local rotation only at the vertex $v$, only the faces containing $v$ are affected. Similar to the method of recombinant strands in \cite{Gross2010}, the edges incident with $v$ break the face(s) containing $v$ into strands, or subwalks, of the facial walk(s) in the original embedding, say $\Pi_1$. After modifying the local rotation at $v$, these subwalks get reconnected according to the new local rotation, forming facial walks in the new embedding $\Pi_2$. If $v$ is of Type 3 in $\Pi_1$, then the total number of faces in $\Pi_2$ is two fewer than in $\Pi_1$. Thus, the genus of the new embedding $\Pi_2$ is one more than that of $\Pi_1$. Similarly, if $v$ is of Type 1B, the genus of $\Pi_2$ is one less than that of $\Pi_1$. Changing the local rotation at a vertex of Type 1A or Type 2 does not change the genus because the number of faces is unchanged, though the structure of these faces may be different. 

Returning to the notion of stability with respect to a given base embedding $\hatPi$, we now have a way of determining the genus of an embedding $\Pi$ that has a single $\Pi$-unstable vertex. We simply determine the type of the unstable vertex in the embedding $\Pi$ with respect to $\hatPi$ and apply the conclusions above.

For a given base embedding $\hatPi$ of a cubic graph $G$, every embedding can be described by its unstable vertices with respect to $\hatPi$. This naturally suggests a lattice or hypercube structure that we describe as follows. Label the vertices of $G$ by $V(G) = \{v_1,v_2,\dots,v_n\}$. Then, for an embedding $\Pi$ of $G$, we can encode the embedding with an $n$-tuple, where the $i^{th}$ entry is 0 if the vertex $v_i$ is $\Pi$-stable, and 1 if $v_i$ is $\Pi$-unstable. The base embedding $\hatPi$ corresponds to the $n$-tuple $(0,0,\dots,0)$. The $n$-tuple $(1,1,\dots,1)$ corresponds to the embedding in which every vertex is $\Pi$-unstable, but every edge and every face is $\Pi$-stable, with the facial walks of $\hatPi$ oriented in the opposite direction. 

We define the \emph{configuration graph of embeddings} with respect to $\hatPi$ to be the graph whose vertices are the binary $n$-tuples, with two $n$-tuples adjacent if and only if they differ in a single position. 
Clearly, the configuration graph is isomorphic to the $n$-dimensional hypercube $Q_n$. We note the similarity of the configuration graph to the \emph{stratified graph} of a graph $G$, introduced by Gross and Tucker \cite{stratified}. 

\begin{figure}[ht]
    \centering
            \includegraphics[width=0.78\textwidth]{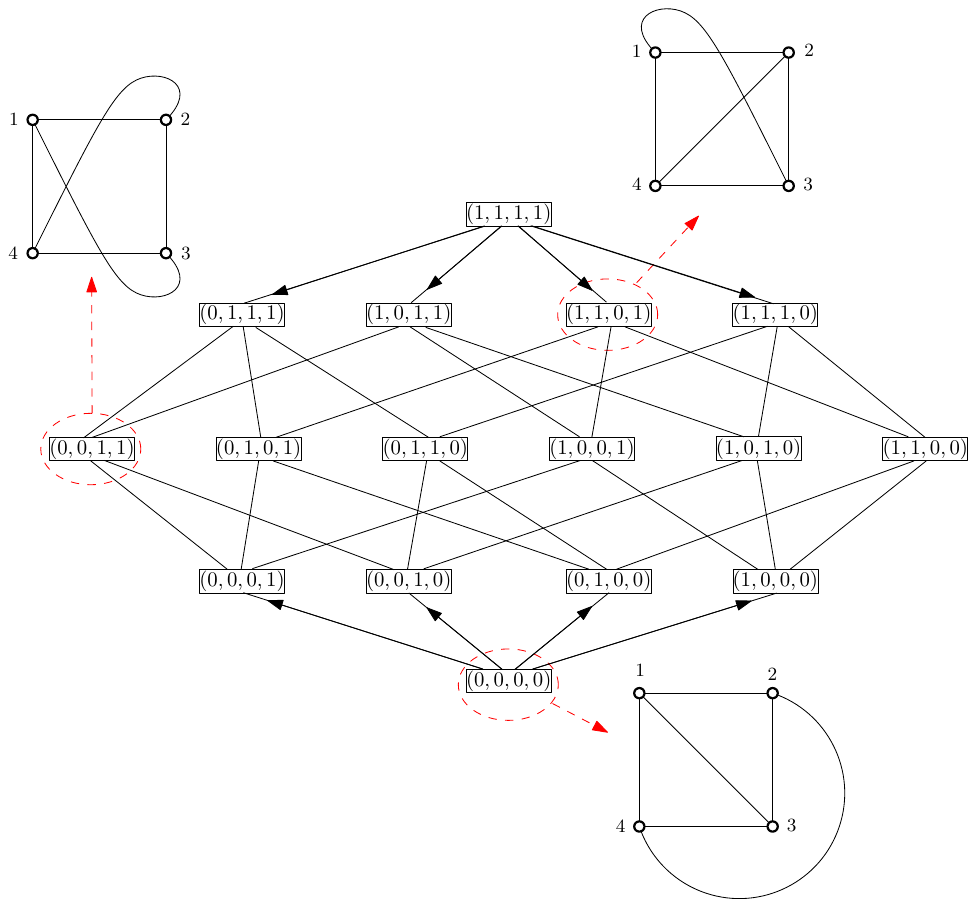}
            \caption{The configuration graph of $K_4$ with respect to the base embedding with local rotations $\hatpi_1 = (2,3,4)$, $\hatpi_2 = (4,3,1)$, $\hatpi_3 = (2,4,1)$ and $\hatpi_4 = (1,3,2)$. Two other embeddings are shown, corresponding to 4-tuples $(0,0,1,1)$ and $(1,1,0,1)$. The directed edges indicate where the genus increases.}
    \label{fig:stabilitygraph1}
\end{figure}

Each edge in the configuration graph corresponds to a change in local rotation at exactly one vertex. Using the method described above for determining the change in genus by changing the local rotation or stability of exactly one vertex, we direct the edges of the configuration graph to indicate the change in genus. For an edge between embeddings $\Pi_a$ and $\Pi_b$, we direct the edge $\Pi_a \to \Pi_b$ if $g(\Pi_a) + 1 = g(\Pi_b)$. The edge remains undirected if $g(\Pi_a) = g(\Pi_b)$. 
See Figure~\ref{fig:stabilitygraph1}. 

Recall that, by Theorem \ref{thm:Tutte}, a 3-connected planar graph $G$ has exactly two planar embeddings, both with the same set of faces and the facial walks of the two embeddings oriented in opposite directions. Consider one of these planar embeddings to be the base embedding $\hatPi$ of $G$. 
Every vertex in a planar embedding of $G$ is of Type 3, so every edge incident with the vertex $(0,0,\dots,0)$ in the configuration graph will be directed away from $(0,0,\dots,0)$ and every edge incident with $(1,1,\dots,1)$ will similarly be directed away from this vertex. For this reason, we will assume that the base embedding for a 3-connected planar graph is one of the two planar embeddings, unless otherwise specified. 

The structure of the unstable vertices, edges and faces in an embedding of a cubic graph can give some insight into the calculation of the genus distribution. We explore two such structures in the following section. 

\section{Unstable cuts and unstable duals}
\label{section:cuts and duals}

\subsection{Unstable cuts}

Given an embedding $\Pi$ of a cubic graph $G$, let $\Unst_\Pi$ be the $\Pi$-unstable set as defined above. Then the \emph{$\Pi$-unstable cut}, denoted by $\delta(\Unst_\Pi)$, is the edge-cut $E(\Unst_\Pi,V(G)\setminus \Unst_\Pi)$. Note that, because $\delta(\Unst_\Pi)$ contains exactly the edges with one $\Pi$-unstable endpoint and one $\Pi$-stable endpoint, it follows that $\delta(\Unst_\Pi)$ is the set of all $\Pi$-unstable edges. The subgraph induced by the $\Pi$-unstable set, $G[\Unst_\Pi]$, has components $H_1,H_2,\dots,H_\gamma$. We call these the \emph{$\Pi$-unstable components} of $G$ and, for $i=1,2,\dots,\gamma$, let $c_i = |E\left(V(H_i),V(G)\setminus V(H_i)\right)|$, i.e.~the number of $\Pi$-unstable edges whose $\Pi$-unstable endpoint is in the component $H_i$. It follows that $|\delta(\Unst_\Pi)| = \sum_{i=1}^{\gamma} c_i$. See Figure \ref{fig:UnstableEx} for an example with three $\Pi$-unstable components and $|\delta(\Unst_\Pi)|=13$. 

\begin{figure}[ht]
    \centering
         \hspace{-1cm}\includegraphics[width=0.45\textwidth]{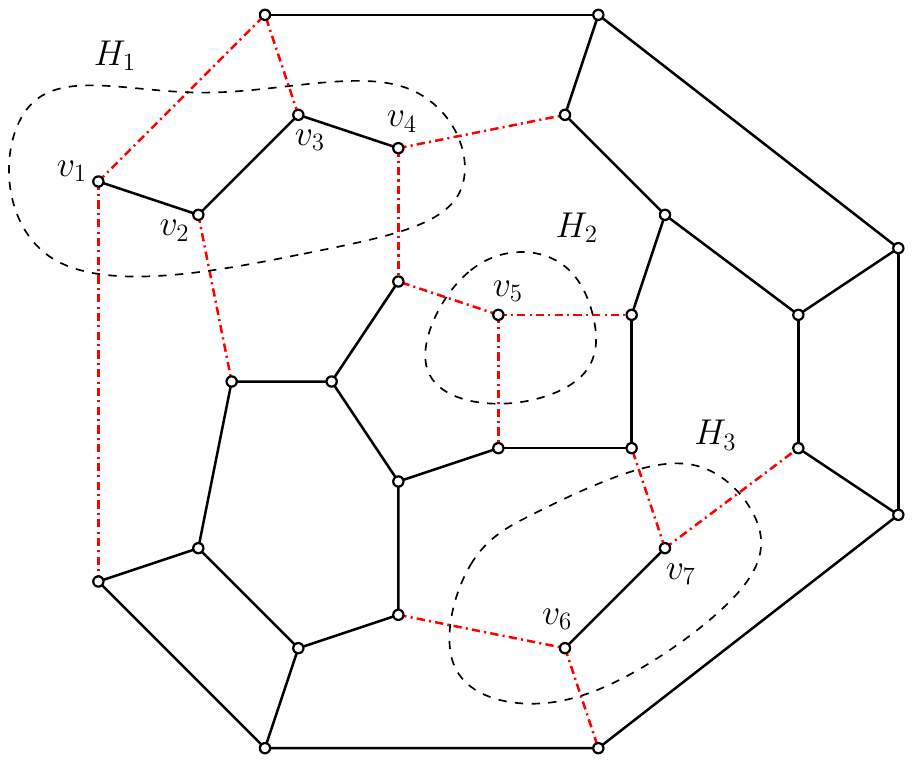}        
    \caption{A planar embedding of a cubic graph $G$ with unstable vertices in an embedding $\Pi$ labelled $v_1,v_2,\dots,v_7$. Unstable edges are indicated by red dash dotted edges. The three $\Pi$-unstable components, $H_1$, $H_2$ and $H_3$ are circled using dashed lines.}
    \label{fig:UnstableEx}
\end{figure}

Consider a cubic 3-connected planar graph $G$. If an embedding $\Pi$ of $G$ has a single $\Pi$-unstable component, then by tracing the $\Pi$-facial walks we can see that the planar faces containing the $\Pi$-unstable edges incident with that component become either one or two $\Pi$-faces, depending on the parity of the number of planar faces. It is more complex to study the $\Pi$-facial walks when there are multiple unstable components whose unstable edges appear together on a face of the base embedding, or when the base embedding is not planar. However, we can determine the maximum possible decrease in the number of faces, to obtain an upper bound on the genus $g(\Pi)$ of the embedding $\Pi$.

\begin{theorem}
    \label{thm:edgecutsupper}
    Let $G$ be a cubic graph and let $\Pi$ be an embedding of $G$ with $\Pi$-unstable cut $\delta(\Unst_\Pi)$ with respect to a base embedding $\hatPi$ of $G$. Let $H_1,H_2,\dots,H_\gamma$ be the $\Pi$-unstable components with $c_i = |E(V(H_i),V(G)\setminus V(H_i))|$, for $i=1,2,\dots,\gamma$. Then, $$g(\Pi) \leq g(\hatPi) + \left\lfloor \frac{c_1-1}{2}\right\rfloor +\left\lfloor \frac{c_2-1}{2}\right\rfloor + \dots + \left\lfloor \frac{c_\gamma-1}{2}\right\rfloor.$$
\end{theorem}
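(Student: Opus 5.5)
The plan is to reduce everything to Euler's formula and a local ``strand'' count, treating the unstable components one at a time.

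Since $G$ is cubic, each vertex has only two possible local rotations, so a $\Pi$-unstable vertex satisfies $\pi_v=\hatpi_v^{-1}$. Define intermediate embeddings $\Pi_0=\hatPi,\Pi_1,\dots,\Pi_\gamma=\Pi$, where $\Pi_j$ is obtained from $\Pi_{j-1}$ by inverting the local rotation at every vertex of $H_j$. Because $\Pi_{j-1}$ and $\Pi_j$ have the same vertices and edges, Euler's formula gives
\[ g(\Pi_j)-g(\Pi_{j-1})=\tfrac12\bigl(F(\Pi_{j-1})-F(\Pi_j)\bigr), \]
where $F$ denotes the number of faces. So it suffices to prove the following general claim: if $X\subseteq V(G)$ induces a connected subgraph, and an embedding $\Pi''$ is obtained from an embedding $\Pi'$ by inverting the rotations at all vertices of $X$, then $F(\Pi')-F(\Pi'')\le 2\lfloor (|\delta(X)|-1)/2\rfloor$. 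Summing this over $j$, with $X=V(H_j)$ and $|\delta(X)|=c_j$, gives the theorem. The vertex sets $V(H_j)$ are disjoint, and the claim holds for any base embedding $\Pi'$, so the steps do not interfere with one another.

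To prove the claim, let $c=|\delta(X)|$ and classify the faces of $\Pi'$ into three kinds. Faces avoiding $X$ are untouched by the flip. Faces using only vertices of $X$ and edges inside $G[X]$ simply reappear with reversed orientation. The remaining faces traverse at least one edge of $\delta(X)$. Each edge of $\delta(X)$ is traversed exactly twice in total, so these remaining faces cross between $X$ and $V(G)\setminus X$ exactly $2c$ times. The crossings cut these facial walks into $c$ ``outside strands'' and $c$ ``inside strands''. After the flip, the outside strands are unchanged, since the rotations there are unchanged. The inside strands are replaced by their reversals, since inverting every rotation inside $X$ reverses the local face tracing there; this is exactly the face-tracing rule with signature $-1$ on the cut edges. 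The strands then reconnect into new facial walks.

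Hence, both before and after the flip, the faces meeting $\delta(X)$ are formed by gluing the same $c$ outside strands. Their number is therefore between $1$ and $c$, and it is at least $1$ because $c\ge 1$, which holds as $G$ is connected and $X\ne V(G)$. If $X=V(G)$ the flip just reverses every face and there is nothing to prove. So the face count drops by at most $c-1$. Since the genus is an integer, the drop is even, and so it is at most $2\lfloor (c-1)/2\rfloor$.

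I expect the main obstacle to be justifying rigorously that inverting all rotations in $X$ exactly reverses the inside strands while keeping their endpoints on the same cut-edge sides. I would do this with the signed face-tracing rule from Section~\ref{section:framework}. Relative to $\Pi'$, the only edges with signature $-1$ are those of $\delta(X)$, and inside $X$ the parity $t$ is constant along a strand. One then needs to check that the closed walks entirely within $G[X]$ are correctly accounted for, namely that they are unaffected apart from orientation.
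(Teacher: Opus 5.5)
Your proposal is correct and is essentially the paper's proof: you flip the unstable components one at a time, note that at most $c_i$ faces traverse edges of $E_i$ before the flip and at least one does afterwards, and turn the resulting drop of at most $c_i-1$ faces into a genus increase of at most $\lfloor (c_i-1)/2\rfloor$ via Euler's formula and parity, with your strand-reversal argument supplying the justification the paper states in one line. One correction: in the degenerate case $U_\Pi=V(G)$ you have $c_1=0$, so the stated bound reads $g(\Pi)\le g(\hatPi)-1$, which is false because the all-flipped embedding is the mirror image of $\hatPi$ and has the same genus; this case must therefore be excluded from the statement rather than dismissed as having nothing to prove (the paper's proof overlooks it too).
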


\begin{proof}
    Let $E_i = E(V(H_i),V(G)\setminus V(H_i))$, for $1\leq i \leq \gamma$. Note that these edge-sets partition the set of $\Pi$-unstable edges. We obtain the embedding $\Pi$ by beginning with $\hatPi$ and, for $i=1,2,\dots,\gamma$, flipping the local rotation at each vertex of $H_i$. It suffices to show that at each such step, the genus of the embedding increases by at most $\left\lfloor \frac{c_i-1}{2} \right\rfloor$.

    At stage $i$, only the facial walks containing the unstable edges in $E_i$ are affected; all other facial walks remain unchanged. The number of these facial walks is at most $c_i$ and hence the number of faces decreases by at most $c_i-1$. Then, by Euler's formula, the genus increases by at most $\left\lfloor \frac{c_i-1}{2} \right\rfloor$. 

    Therefore, $g(\Pi)\leq g(\hatPi) + \sum_{i=1}^{\gamma} \left\lfloor \frac{c_i-1}{2}\right\rfloor.$
\end{proof}

In some cases, additional conditions on the connectivity of $G$ or the genus of the base embedding $\hatPi$ allow for an exact calculation of the genus of an embedding with a small number of unstable edges. 

\begin{corollary}
    Let $G$ be a cubic 3-connected planar graph with $n\geq 5$. Let $\Pi$ be an embedding of $G$ with $\Pi$-unstable cut $\delta(\Unst_\Pi)$. If $|\delta(\Unst_\Pi)|\in\{3,4\}$, then $g(\Pi)=1$. 
    \label{cor:g1cut}
\end{corollary}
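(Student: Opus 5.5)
The plan is to obtain the upper bound $g(\Pi)\le 1$ from Theorem~\ref{thm:edgecutsupper}, and the lower bound $g(\Pi)\ge 1$ from the uniqueness of planar embeddings of $3$-connected planar graphs. Following the convention of Section~\ref{section:framework}, I take the base embedding $\hatPi$ to be one of the two planar embeddings of $G$, so $g(\hatPi)=0$.

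\textbf{Upper bound.} Since $|\delta(\Unst_\Pi)|\in\{3,4\}$, the cut is nonempty, so $\Unst_\Pi$ is a nonempty proper subset of $V(G)$. Let $H_1,\dots,H_\gamma$ be the $\Pi$-unstable components. For each $i$, $V(H_i)$ is a nonempty proper subset of $V(G)$, so $E_i=E(V(H_i),V(G)\setminus V(H_i))$ is a genuine edge-cut of $G$. A $3$-connected cubic graph is $3$-edge-connected (for cubic graphs vertex- and edge-connectivity coincide), hence $c_i\ge 3$ for every $i$. Since $\sum_i c_i=|\delta(\Unst_\Pi)|\le 4$, we must have $\gamma=1$ and $c_1\in\{3,4\}$. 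Then $\lfloor (c_1-1)/2\rfloor=1$, and Theorem~\ref{thm:edgecutsupper} gives $g(\Pi)\le 0+1=1$.

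\textbf{Lower bound.} By Theorem~\ref{thm:Tutte} and the discussion after it, $G$ has exactly two planar embeddings: $\hatPi$ and its mirror image. These correspond to the tuples $(0,\dots,0)$ and $(1,\dots,1)$ in the configuration graph, that is, to $\Unst_\Pi=\emptyset$ and $\Unst_\Pi=V(G)$. In both cases the unstable cut is empty. Since here $\delta(\Unst_\Pi)\neq\emptyset$, $\Pi$ is neither of these, so $\Pi$ is not planar and $g(\Pi)\ge 1$. Combining the two bounds gives $g(\Pi)=1$.

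\textbf{Main obstacle.} The step needing the most care is the upper-bound argument ruling out several unstable components, which rests on each $c_i\ge 3$. I would justify the $3$-edge-connectivity of a $3$-connected cubic graph explicitly: a $2$-edge-cut would yield a vertex cut of size at most $2$ by choosing one suitable endpoint of each cut edge, and this needs the graph to have enough vertices. The hypothesis $n\ge 5$ seems to be used there, or it just excludes degenerate small cases such as $K_4$, where the argument can also be checked directly. I do not expect the hypothesis to otherwise play a role.
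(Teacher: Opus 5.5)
Your proposal is correct and follows essentially the same route as the paper's proof: $3$-connectivity forces a single unstable component with $c_1\in\{3,4\}$, Theorem~\ref{thm:edgecutsupper} then gives $g(\Pi)\le 1$, and since the only planar embeddings have an empty unstable cut, $g(\Pi)\ge 1$. Your concern about the edge-connectivity step is unnecessary, because Whitney's inequality $\kappa(G)\le\lambda(G)$ holds for every graph, so $3$-connected already implies $3$-edge-connected without using $n\ge 5$.
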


\begin{proof}
    Since $G$ is 3-connected, $|\delta(\Unst_\Pi)|\in\{3,4\}$ implies that there is exactly one $\Pi$-unstable component $H_1$ with $c_1 = |\delta(\Unst_\Pi)|$. By Theorem~\ref{thm:edgecutsupper}, $$g(\Pi) \leq \left\lfloor \frac{c_1-1}{2}\right\rfloor = 1.$$
    Since the only planar embeddings of $G$ have no $\Pi$-unstable edges, it must be the case that $g(\Pi)=1$. 
    \end{proof}
    
Note that in a simple cubic graph, a single unstable vertex or a pair of adjacent unstable vertices results in an unstable cut of size 3 or 4, making these candidates for embeddings with genus 1 when the base embedding is planar. 
    
There are some cases where the bound in Theorem~\ref{thm:edgecutsupper} is tight. For example, if $G$ is a cubic 3-connected planar graph and $\Pi$ is an embedding of $G$ with three unstable vertices that induce a path of length 2. In this case, there is a single $\Pi$-unstable component with five incident unstable edges: two incident with each of the endpoints of the path and one incident with the midpoint of the path. Then, by Theorem \ref{thm:edgecutsupper}, the genus of this embedding is at most $\left\lfloor \frac{5-1}{2}\right\rfloor = 2$. Tracing the faces of this embedding, we see that this is exact. See Figure \ref{fig:5cutb}.

\begin{figure}[ht]
    \centering
    \begin{subfigure}[b]{0.4\textwidth}
         \centering
         \includegraphics[width=0.9\textwidth]{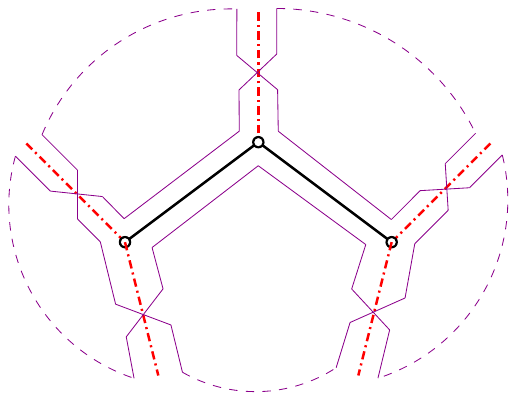}
         \caption{$\Pi$-unstable component is a path of length 2}
         \label{fig:5cutb}
     \end{subfigure}
     \hspace{1.5cm}
     \begin{subfigure}[b]{0.4\textwidth}
         \centering
         \includegraphics[width=0.9\textwidth]{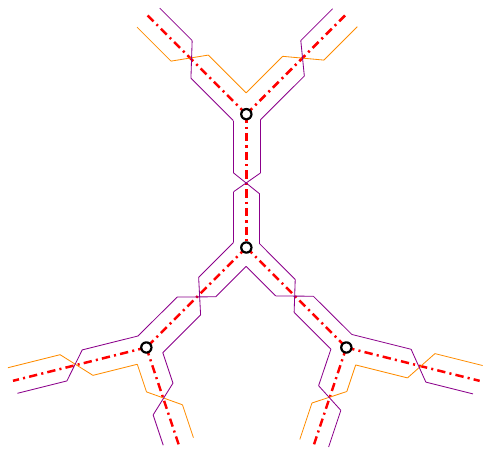}
         \caption{Three $\Pi$-unstable components with a single vertex}
         \label{fig:8cut}
     \end{subfigure}
     \caption{An example of a $\Pi$-unstable set for which Theorem \ref{thm:edgecutsupper} is tight and one for which it is not}
    \label{fig:tightex}
\end{figure}

However, consider a cubic 3-connected planar graph with a vertex $v$ whose closed neighborhood induces a claw (i.e.~there is no edge between any pair of neighbors of $v$). If $\Pi$ is an embedding of $G$ with $\Unst_\Pi = N(v)$, then we have three $\Pi$-unstable components, each with three unstable edges. By Theorem \ref{thm:edgecutsupper}, the genus of this embedding is at most $3\left\lfloor \frac{3-1}{2} \right\rfloor =3$. However, by tracing the faces, we see that the number of faces is reduced by 4, so $g(\Pi)=2$. See Figure \ref{fig:8cut}. In this case, Theorem \ref{thm:edgecutsupper} is not tight because the unstable components are ``too close'' in the graph, causing the facial walks incident with one unstable component in the planar embedding to be combined with the facial walks incident with another unstable component when forming the $\Pi$-facial walks. 

\subsection{Unstable dual}

The dual edges of an edge-cut in a 2-cell embedded graph $G$ form an Eulerian subgraph $H^*$ of the dual graph $G^*$. They separate the surface into two parts, $A\cup B$, with $\partial A = \partial B = E(H^*)$. Such Eulerian subgraphs of the dual are said to be \emph{zero-homologous} since they correspond to zero homology class in the cycle space of $G^*$. Recall that an Eulerian graph is a graph in which every vertex has even degree. Note that we allow an Eulerian graph to be disconnected. 

Given a base embedding $\hatPi$ of a cubic graph $G$ and an embedding $\Pi$ of $G$, we construct the \emph{$\Pi$-unstable dual} graph of $G$, denoted by $G^*_\Pi$ as follows. For a set of edges $U\subseteq E(G)$, let $U^*$ be the set of corresponding dual edges. Let $B\subseteq E(G)$ and $D\subseteq F(G)$ be the set of $\Pi$-unstable edges and the set of $\Pi$-unstable faces in $G$, respectively, with respect to $\hatPi$. Then $G^*_\Pi$ is the graph with vertex set $V(G^*_\Pi)=D$ and edge set $E(G^*_\Pi)=B^*$. Note that $G^*_\Pi$ is a subgraph (not necessarily induced) of the dual graph $G^*$, where the dual graph is constructed from $\hatPi$. 

\begin{figure}[ht]
    \centering
         \includegraphics[width=0.45\textwidth]{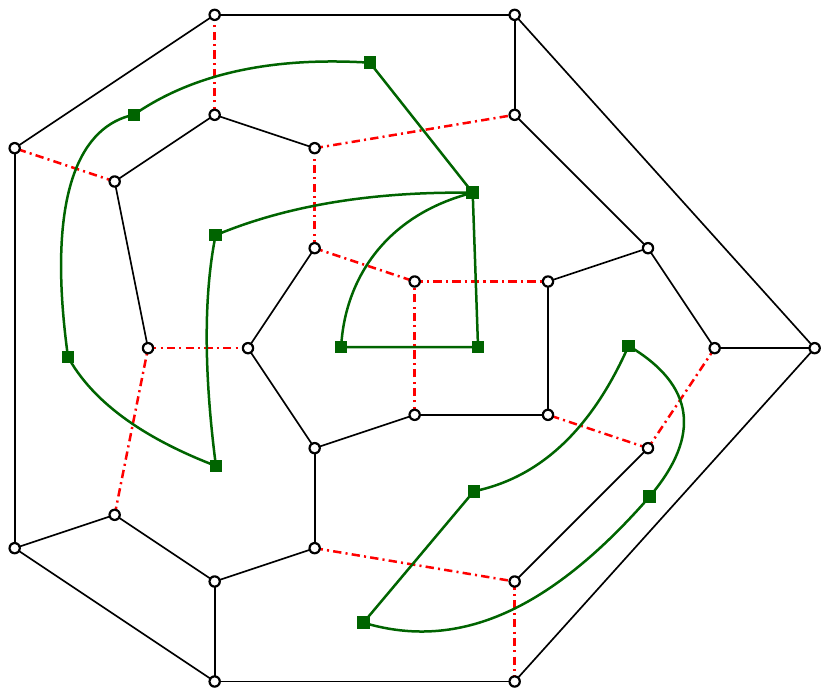}        
    \caption{A cubic 3-connected planar graph $G$ with $\Pi$-unstable edges for some embedding $\Pi$ indicated by red dash dotted edges. The $\Pi$-unstable dual is drawn in green, with square vertices and solid edges. }
    \label{fig:unstabledualexample1}
\end{figure}

Consider the example shown in Figure~\ref{fig:unstabledualexample1} of a cubic 3-connected planar graph $G$ with 13 $\Pi$-unstable edges forming three $\Pi$-unstable components. The $\Pi$-unstable dual, indicated in the figure by square vertices and green solid edges, contains three cycles, two of which share a single vertex. 
Furthermore, the $\Pi$-unstable dual is an zero-homologous subgraph of the dual graph. We show that this is true for any unstable dual and fully characterize the set of all unstable duals of a graph. 

\begin{proposition}
\label{prop:eulerian}
    Let $G$ be a cubic graph with base embedding $\hatPi$ and let $G^*$ be its dual graph. Let $H^*$ be a subgraph of $G^*$ without isolated vertices. Then $H^*$ is the $\Pi$-unstable dual $G^*_\Pi$ for some embedding $\Pi$ of $G$ if and only if $H^*$ is the empty graph or $H^*$ is zero-homologous on the surface of the base embedding $\hatPi$. 
    \end{proposition}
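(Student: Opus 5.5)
The argument runs through the correspondence between edge-cuts of $G$ and zero-homologous subgraphs of $G^*$ described at the start of this subsection. Both directions reduce to one observation: the edge set of a subgraph of $G^*$ is the dual of an edge-cut of $G$ exactly when it is zero-homologous. Once that is available, the unstable dual of $\Pi$ is simply the dual of the unstable cut $\delta(\Unst_\Pi)$, with the isolated vertices discarded.

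\emph{Forward direction.} Let $\Pi$ be any embedding. As noted at the beginning of this section, the set $B$ of $\Pi$-unstable edges is exactly the edge-cut $\delta(\Unst_\Pi)$. If $\Unst_\Pi$ is empty, or if $\Unst_\Pi = V(G)$, then $B=\emptyset$ and $G^*_\Pi$ is empty. Otherwise $B$ is a genuine edge-cut, so by the preceding discussion $B^*$ is an Eulerian, zero-homologous subgraph of $G^*$. It remains to see that the vertex set of $G^*_\Pi$ consists precisely of the faces incident with edges of $B^*$, i.e.\ that there are no isolated vertices. A face of $\hatPi$ is $\Pi$-unstable if and only if its boundary contains a $\Pi$-unstable edge; this is the remark made just before Section~\ref{section:cuts and duals}. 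So $D$ is exactly the set of endpoints of edges of $B^*$, and $G^*_\Pi$ is the zero-homologous subgraph spanned by $B^*$.

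\emph{Converse.} Let $H^*$ be zero-homologous with no isolated vertices. I would use the standard fact, which is the definition of zero-homology in the dual cell complex, that $E(H^*)$ is the coboundary of some vertex set of $G$. Concretely, the faces of $G^*$ are the vertices of $G$, and $E(H^*)=\partial(\text{a set of faces of }G^*)$ means exactly that $E(H^*)^* = \delta(X)$ for some $X\subseteq V(G)$. If one prefers an explicit construction, I would 2-colour the vertices of $G$ by parity of crossings with $H^*$ along paths from a fixed vertex. Well-definedness then follows because every closed walk in $G$ is dual to a 1-cycle on the surface, and such a cycle meets a zero-homologous $H^*$ an even number of times.

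With $X$ in hand, define $\Pi$ by reversing the local rotation at every vertex of $X$ and keeping $\hatpi_v$ elsewhere. Then $\Unst_\Pi = X$, so the $\Pi$-unstable edges are $\delta(X)=E(H^*)^*$. By the face criterion above, the $\Pi$-unstable faces are exactly the endpoints of edges of $H^*$, which are all of $V(H^*)$ since $H^*$ has no isolated vertices. Hence $G^*_\Pi = H^*$. The empty graph is realised by $\Pi=\hatPi$.

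\emph{Main obstacle.} The step needing most care is justifying rigorously that zero-homologous means ``dual to a cut'' in the $\mathbb{Z}_2$ sense. The forward implication is already given in the text, but the converse needs the parity argument: a closed walk in $G$ crosses $H^*$ an even number of times. I would prove this by noting that the $\mathbb{Z}_2$-intersection number between a cycle of $G$ and a boundary $H^* = \partial A$ is zero, since the walk enters and leaves the region $A$ equally often. The remaining subtlety is that every face incident with an unstable edge is a $\Pi$-unstable face. I would argue this from the face-tracing rule: the walk switches sides at that edge, so the facial walk of $\hatPi$ is not reproduced, even up to reversal.
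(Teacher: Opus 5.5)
Your proposal is correct and follows essentially the paper's own argument: both directions identify zero-homologous dual subgraphs with duals of edge-cuts $\delta(X)$, and realize $H^*$ by reversing the rotations on one side $X$ (the paper additionally normalizes to $|X|\le n/2$, which is unnecessary). Your explicit check that the $\Pi$-unstable faces are exactly the endpoints of edges of $B^*$ shows that the vertex sets agree, not just the edge sets — a step the paper leaves implicit.
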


\begin{proof}
    ($\Leftarrow$) Suppose $H^*$ is a subgraph (not necessarily induced) of $G^*$ that is zero-homologous on the surface of the base embedding $\hatPi$. Then, the edges of $H^*$ split the surface into two parts, $A\cup B$, with $\partial A = \partial B = H^*$
    
    Let $U$ be the set of vertices of $G$ in $A$ and let $V$ be the set of vertices in $B$. If $|U|\leq \frac{n}{2}$, let $\Pi$ be the embedding of $G$ with $\Pi$-unstable set $\Unst_\Pi=U$. Otherwise, let $\Pi$ be the embedding of $G$ with $\Unst_\Pi=V$. 

    Now, consider an edge $e$ with one endpoint in $U$ and the other in $V$. This is a $\Pi$-unstable edge and thus its dual edge $e^*$ is contained in $G^*_\Pi$. It is also clear that $e^* \in E(H^*)$ and that every edge of $H^*$ is of this form. Thus, $H^*$ and $G^*_\Pi$ have the same set of edges and so $H^*$ is the $\Pi$-unstable dual of the embedding $\Pi$ of $G$. 

    Finally, if $H^*$ is the empty graph, then it is the unstable dual of the base embedding $\hatPi$. 

    ($\Rightarrow$) Let $H^*$ be the $\Pi$-unstable dual $G^*_\Pi$ for some embedding $\Pi$ of $G$. If $H^*$ is nonempty, the set of $\Pi$-unstable vertices in $G$, $U_\Pi$, and its complement must both be nonempty. By definition, the set of $\Pi$-unstable edges is an edge-cut, $\delta(U_\Pi)$, separating $U_\Pi$ from $V(G)\setminus U_\Pi$. In the dual graph, the edges in $\delta(U_\Pi)^*$ must separate the faces of $G^*$ corresponding to $\Pi$-unstable vertices from those corresponding to $\Pi$-stable vertices. In other words, the edges of $H^*$ separate the surface into two parts, one corresponding to the union of the $\Pi$-unstable components. Thus, $H^*$ is zero-homologous in the surface of the base embedding $\hatPi$. 
\end{proof}

We now examine the structure of the unstable dual of a cubic graph and determine properties that indicate the genus of the corresponding embedding of $G$. We begin by describing ways of reducing the unstable dual and the effects of these reductions on the genus of the corresponding embeddings. 

As a tool, we use the earlier described signature on the edges of a cubic graph corresponding to the $\Pi$-stability of each edge. Each $\Pi$-unstable edge receives a negative signature and each $\Pi$-stable edge receives a positive signature. Then, to trace the faces in the embedding $\Pi$, we can think of switching the direction at the next vertex whenever an edge with negative signature (i.e.~an unstable edge) is crossed. The subwalks of these facial walks consisting only of stable edges (those with positive signature) are the \emph{stable subwalks} of the facial walk. Then, a $\Pi$-facial walk is a sequence of stable subwalks, separated by $\Pi$-unstable edges. 

Let $\hatPi$ be the base embedding of a cubic graph $G$, and $\Pi$ an embedding of $G$. We define $\Delta g(\Pi)$ to be the difference in genus from $\hatPi$ to $\Pi$. More formally, $\Delta g(\Pi) = g(\Pi)-g(\hatPi) = \frac{1}{2}(f(\hatPi)-f(\Pi))$, where $f(\Pi)$ is the number of faces in the embedding $\Pi$. Note that $\Delta g(\Pi)$ may be positive or negative. 

\begin{proposition}
    Let $G$ be a cubic graph and $\Pi$ an embedding of $G$, with $\Pi$-unstable dual $G^*_\Pi$. Suppose that $G^*_\Pi$ is the disjoint union of two graphs $H_1$ and $H_2$, each of which is zero-homologous. Then there exist embeddings of $G$, $\Pi_1$ and $\Pi_2$, such that $G^*_{\Pi_1} = H_1$ and $G^*_{\Pi_2} = H_2$. Furthermore, $\Delta g(\Pi) = \Delta g(\Pi_1)+ \Delta g(\Pi_2)$.\label{prop:disconnected dual}
\end{proposition}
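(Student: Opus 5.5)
The existence of $\Pi_1$ and $\Pi_2$ follows directly from Proposition~\ref{prop:eulerian}: since $H_1$ and $H_2$ are each zero-homologous subgraphs of $G^*$ without isolated vertices, there are embeddings $\Pi_1,\Pi_2$ with $G^*_{\Pi_1}=H_1$ and $G^*_{\Pi_2}=H_2$. For the genus formula I would fix the unstable sets precisely, taking $\Unst_{\Pi_j}$ to be the vertex set on one side of $H_j$. Note that $\delta(\Unst_\Pi)^* = E(H_1)\cup E(H_2)$ and $\delta(\Unst_{\Pi_1})\triangle\delta(\Unst_{\Pi_2}) = \delta(\Unst_{\Pi_1}\triangle \Unst_{\Pi_2})$. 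Since $E(H_1)\cap E(H_2)=\emptyset$, after complementing $\Unst_{\Pi_2}$ if necessary (complementing all rotations only reverses all faces and does not change the genus), we may assume $\Unst_\Pi = \Unst_{\Pi_1}\triangle\Unst_{\Pi_2}$. This works because the cut spaces match and two vertex sets with the same coboundary in connected $G$ are equal or complementary.

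The main step is a face count: $f(\hatPi)-f(\Pi) = (f(\hatPi)-f(\Pi_1)) + (f(\hatPi)-f(\Pi_2))$. I would argue locally via the signature face-tracing. Every facial walk of $\Pi$ is a cyclic sequence of stable subwalks of $\hatPi$-faces joined by $\Pi$-unstable edges, and which strands get joined is determined entirely by the $\hat\Pi$-faces (dual vertices) incident with unstable edges. Since $H_1$ and $H_2$ are vertex-disjoint, each $\Pi$-unstable face of $\hatPi$ meets unstable edges from only one $H_j$.

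I then trace a $\Pi$-face. Starting on a face $F\in V(H_1)$ and crossing edges of $H_1$, the walk moves to faces adjacent in $H_1$. To reach a face of $H_2$ it would need to traverse a stable subwalk along some $\hatPi$-face that contains unstable edges from both $H_1$ and $H_2$, and no such face exists. However, crossing an unstable edge flips orientation, and whether the local rotations used are $\hatpi$ or $\hatpi^{-1}$ depends on the parity $t$. So I need to check that a $\Pi$-face touching only $H_1$-faces coincides (up to reversal) with a $\Pi_1$-face. The key point is that the tracing rule at each step uses only $f(e)$ and $t$. Along $H_1$-faces, $f_\Pi$ and $f_{\Pi_1}$ agree: they agree on $H_1$ edges, and all other edges on those faces are stable in both, since no $H_2$ edge lies on an $H_1$-face. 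Hence $\Pi$-faces meeting $H_1$ are exactly $\Pi_1$-faces meeting $H_1$, and similarly for $H_2$. The remaining $\Pi$-faces are the stable $\hatPi$-faces, which are stable in both $\Pi_1$ and $\Pi_2$.

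Counting then gives $f(\Pi)=f(\hatPi)-|D_1|-|D_2|+a_1+a_2$, where $D_j=V(H_j)$ and $a_j$ is the number of $\Pi_j$-faces through faces of $D_j$. Likewise $f(\Pi_j)=f(\hatPi)-|D_j|+a_j$. Halving the differences yields $\Delta g(\Pi)=\Delta g(\Pi_1)+\Delta g(\Pi_2)$.

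The step I expect to be the main obstacle is the tracing identification. The subtlety is that the starting parity $t_0=1$ is defined relative to the starting vertex, and that a stable vertex in $\Pi$ may be unstable in $\Pi_1$ (or vice versa) under the symmetric-difference choice. To handle this, I would compare the local rotations actually used rather than the labels: on a face of $D_1$, a vertex's rotation in $\Pi$ versus $\Pi_1$ differs by membership in $\Unst_{\Pi_2}$. That membership is constant on each $\hatPi$-face of $D_1$, because such a face crosses no edge of $H_2$. So along the whole component the two embeddings differ by a global reversal, and reversal preserves faces as sets.
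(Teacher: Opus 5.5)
Your proposal is correct and follows the paper's route: existence of $\Pi_1,\Pi_2$ comes from Proposition~\ref{prop:eulerian}, and the identity $f(\hatPi)-f(\Pi)=(f(\hatPi)-f(\Pi_1))+(f(\hatPi)-f(\Pi_2))$ comes from observing that every $\Pi$-face uses stable subwalks of $\hatPi$-faces belonging to only one of $H_1,H_2$. You go further than the paper by justifying that the $\Pi$-faces meeting $H_j$ are exactly the $\Pi_j$-faces meeting $H_j$, because the signatures agree on every edge of a face in $V(H_j)$; the paper leaves this step implicit, and your symmetric-difference normalization is needed only for the alternative rotation-comparison check, not for the signature argument itself.
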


\begin{proof}
That $H_1$ and $H_2$ are the unstable duals of some embeddings $\Pi_1$ and $\Pi_2$, respectively, of $G$ follows directly from Proposition~\ref{prop:eulerian}. 

Now, we show that these embeddings satisfy the desired genus condition. Let $\hatPi$ be the base embedding of $G$. As discussed above, the facial walks of $\Pi$ each consist of a sequence of alternating $\Pi$-stable subwalks and $\Pi$-unstable edges. So each $\Pi$-face can only contain stable subwalks belonging to the $\hatPi$-faces in one of the two components of $G^*_\Pi$. 

If $f(\hatPi)$ is the number of faces in the base embedding and $f(\Pi)$ is the number of faces in the embedding $\Pi$, then $\Delta g(\Pi) = \frac{1}{2}\big(f(\hatPi)-f(\Pi)\big)$. Similarly, let $f(\Pi_1)$ and $f(\Pi_2)$ be the number of faces in $\Pi_1$ and $\Pi_2$, respectively. Then, $\Delta g(\Pi_1) = \frac{1}{2}\big(f(\hatPi)-f(\Pi_1)\big)$ and $\Delta g(\Pi_2) = \frac{1}{2}\big(f(\hatPi)-f(\Pi_2)\big)$. By the discussion above, we have $f(\hatPi)-f(\Pi) = \big(f(\hatPi)-f(\Pi_1)\big) + \big(f(\hatPi)-f(\Pi_2)\big)$ and therefore, $\Delta g(\Pi) = \Delta g(\Pi_1)+\Delta g(\Pi_2)$, as desired. 
\end{proof}

Proposition~\ref{prop:disconnected dual} implies that we can study the genus of the embeddings corresponding to only connected unstable duals. For disconnected unstable duals, we can determine the genus by considering their connected components. Next, we show that in fact we can also sum over 2-connected components of the unstable dual in a similar way. 

\begin{theorem}
    Let $G$ be a cubic graph and $\Pi$ an embedding of $G$ with unstable dual $G^*_\Pi$ that has a cut-vertex $f$. Suppose that  $G^*_\Pi$ is constructed from zero-homologous graphs $H_1,\dots,H_k$, where $V(H_i)\cap V(H_j) = \{f\}$ for all $1\le i < j \le k$. If the edges of each $H_i$ appear consecutively in the local rotation around $f$, then there exist embeddings $\Pi_1, \dots, \Pi_k$ of $G$ such that $G^*_{\Pi_i} = H_i$, for $i=1,\dots,k$. Furthermore, $\Delta g(\Pi) = \sum_{i=1}^{k} \Delta g(\Pi_i)$.
    \label{thm:cut vertex dual}
\end{theorem}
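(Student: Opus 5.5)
The plan follows the proof of Proposition~\ref{prop:disconnected dual}. Existence is immediate: each $H_i$ is zero-homologous, so Proposition~\ref{prop:eulerian} gives an embedding $\Pi_i$ with $G^*_{\Pi_i}=H_i$. Concretely, $\Pi_i$ is obtained from $\hatPi$ by flipping the local rotations on one side of $H_i$. Moreover, the unstable edge sets $B_i$ (with $B_i^*=E(H_i)$) partition the set $B$ of $\Pi$-unstable edges, since $E(G^*_\Pi)$ is the disjoint union of the $E(H_i)$.

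For the genus identity I will count faces via stable subwalks. Every $\hatPi$-face $F\neq f$ lies in at most one $H_i$, since distinct $H_i$ meet only at $f$. Each $\Pi$-facial walk is a cyclic sequence of stable subwalks and unstable edges. Each stable subwalk runs along one $\hatPi$-face between two consecutive unstable edges on that face; these are the same strands for $\Pi$ and for $\Pi_i$. I claim that a $\Pi$-facial walk which crosses an unstable edge of $B_i$ only ever uses edges of $B_i$.
\begin{itemize}
\item At a face $F\neq f$ this holds automatically, because all unstable edges on $F$ belong to $B_i$.
\item At $f$ I use the hypothesis that the edges of $H_i$ are consecutive in the rotation around $f$. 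The unstable edges on the boundary walk of $f$ then split into $k$ intervals, one per $H_i$. A strand of $f$ joins two cyclically consecutive unstable edges, so it joins two edges of the same $H_i$, except at the $k$ ``junction'' strands which join the last edge of $H_i$ to the first edge of $H_{i+1}$.
\end{itemize}

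The main obstacle is handling these junctions. A walk entering $f$ across the last edge of $B_i$ follows the boundary of $f$ in a direction determined by the sign variable $t$. The naive worry is that it reaches the first edge of $B_{i+1}$. I would show the direction is forced: since $H_i$ is zero-homologous, the walk always arrives at the boundary of $f$ heading back into the $H_i$-interval. The cleanest argument is a parity argument. Because $H_i$ is zero-homologous, the sides $A_i,B_i$ of $H_i$ alternate around $f$, so the edges of $H_i$ at $f$ come in an even number and the portion of $f$'s boundary strictly between the $H_i$-interval lies on a single side. I would then compare directly with $\Pi_i$. The $\Pi$-face traced locally near the $H_i$-interval of $f$ coincides with the $\Pi_i$-face obtained by treating all other unstable edges as stable. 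This is because in $\Pi_i$ the complementary arc of $f$ is a single stable strand, and in $\Pi$ it is replaced by an excursion through the $H_j$ ($j\ne i$) that returns to the same endpoint. If this bookkeeping becomes messy, I would instead prove the statement by induction on $k$. Splitting off $H_k$ reduces to $k=2$, where the two intervals at $f$ make the check a short case analysis of the strand structure.

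Granting the claim, the $\Pi$-faces fall into two kinds:
\begin{itemize}
\item the $\hatPi$-faces untouched by unstable edges, which are common to all embeddings;
\item for each $i$, the faces traversing $B_i$, which are in bijection with the $\Pi_i$-faces traversing $B_i$.
\end{itemize}
The face $f$ itself is unstable in every $\Pi_i$, so it is counted among the changed faces in each. Hence
\[
f(\hatPi)-f(\Pi)=\sum_{i=1}^k \bigl(f(\hatPi)-f(\Pi_i)\bigr),
\]
and dividing by two yields $\Delta g(\Pi)=\sum_{i=1}^k\Delta g(\Pi_i)$.
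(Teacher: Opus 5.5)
Your existence argument and your observation that the sets $B_i$ partition the $\Pi$-unstable edges are fine. The gap is the central claim, that a $\Pi$-facial walk crossing an edge of $B_i$ uses no unstable edge outside $B_i$. This claim is false, so no parity or direction-forcing argument can establish it.

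You note yourself that the stable subwalks of $\Pi$-faces are the strands of unstable $\hatPi$-faces between consecutive unstable edges. Moreover, every such strand occurs in some $\Pi$-face. At a cubic vertex both cyclic orders have the same three angles, so each angle of $\hatPi$ is an angle of $\Pi$ and lies on exactly one $\Pi$-face. Apply this to the junction strand of $f$ between the last edge $e'_1$ of $H_1$ and the first edge $e_2$ of $H_2$. Some $\Pi$-face runs $e'_1$, then this strand, then $e_2$, so it contains both $e'_1\in B_1$ and $e_2\in B_2$.

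For a concrete case, take the cube $Q_3$ with its planar base embedding, and let $\Pi$ flip two diagonally opposite vertices of a face $f$. Then $G^*_\Pi$ is two triangles meeting at $f$, and all hypotheses hold. Here $\Pi$ has genus $2$ and only two faces, one of which contains all six unstable edges. Your partition predicts three faces.

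Even granting the claim, your last step does not follow. Write $c_i$ for the number of $\Pi_i$-faces through $B_i$. Then $f(\hatPi)-f(\Pi_i)=|V(H_i)|-c_i$. Since $f$ lies in all $k$ graphs $H_i$, we have $\sum_i|V(H_i)|=|V(G^*_\Pi)|+k-1$. Your partition would therefore give $f(\hatPi)-f(\Pi)=\sum_i\bigl(f(\hatPi)-f(\Pi_i)\bigr)-(k-1)$. That contradicts the theorem; for $k=2$ it would make $\Delta g(\Pi)$ a non-integer. Your remark that $f$ is ``counted among the changed faces in each'' hides exactly this overcount.

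What actually happens is a fusion of faces. For each $i$, the arc of $f$ complementary to the $H_i$-interval is a single stable strand in $\Pi_i$. The $k$ faces containing these arcs, one from each $\Pi_i$, fuse into a single $\Pi$-face, losing exactly $k-1$ faces. Your ``excursion'' sentence points in this direction, but it contradicts your own claim, and this fusion is precisely what needs proof.

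The paper handles $k=2$ as follows. It subdivides $e_2$ and $e'_2$ and adds a chord $x_2$ inside $f$. The new base embedding has the same genus and one more face. The fused $\Pi$-face splits into two, so the face count of $\Pi$ also grows by one. The new unstable dual is the disjoint union of $H_1$ and $H_2$, so Proposition~\ref{prop:disconnected dual} applies. Induction on $k$ then finishes the proof.
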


\begin{proof}
First, assume that $k=2$, so that $G^*_\Pi$ is constructed from zero-homologous graphs $H_1$ and $H_2$, whose intersection is the cut-vertex $f$. Since each $H_i$ is zero-homologous, by Proposition \ref{prop:eulerian}, there is some embedding $\Pi_i$ of $G$ such that $G^*_{\Pi_i}$ is the graph $H_i$, for $i=1,2$. 

Let $e_1$ and $e'_1$ be the first and last edges, respectively, in the local rotation at $f$ in $G^*_\Pi$ that belong to $H_1$. Define $e_2$ and $e'_2$ analogously for $H_2$. Then, in $\hatPi$, the boundary of the face $f$ has the form $e_1,\dots,e'_1,\dots,e_2,\dots,e'_2,\dots$. Construct the graph $G'$ from $G$ by subdividing the edges $e_2$ and $e'_2$ and joining the two new vertices with the edge $x_2$ inside the face $f$. This forms the base embedding $\hatPi'$ of $G'$. Note that $g(\hatPi') = g(\hatPi)$. 

Label the edges formed from the subdivided edges $e_2$ and $e'_2$ by $d_2$ and $e_2$ and by $e'_2$ and $d'_2$, respectively, as shown in Figure \ref{fig:cutvertexsplit}. The stability of the endpoints of $x_2$ should be such that the edges $e_2$ and $e'_2$ are $\Pi'$-unstable and the edges $d_2$ and $d'_2$ are $\Pi'$-stable, with respect to $\hatPi'$. Note that the edge $x_2$ must be $\Pi'$-stable. Now, the $\Pi'$-unstable dual of $G'$ is isomorphic to the disjoint union of $H_1$ and $H_2$. By Proposition \ref{prop:disconnected dual}, we have that $\Delta g(\Pi') = \Delta g(\Pi_1)+\Delta g(\Pi_2)$. 

\begin{figure}[ht]
    \centering
         \includegraphics[width=0.22\textwidth]{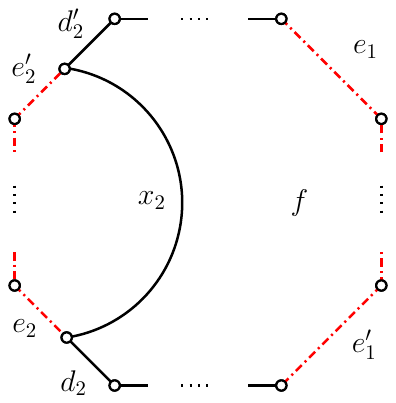}        
    \caption{Splitting the unstable face $f$ in $G$ by subdividing the unstable edges $e_2$ and $e'_2$ and adding an edge between the two new vertices.}
    \label{fig:cutvertexsplit}
\end{figure}

Finally, consider the following $\Pi$-facial walk in $G$. 
$$e_1,W'_1,e'_1,W_1,e_2,W'_2,e'_2,W_2,$$
where $W_1$ is the stable subwalk of the face $f$ between $e'_1$ and $e_2$, and $W'_1$ is the subwalk of this facial walk between $e_1$ and $e'_1$. The subwalks $W_2$ and $W'_2$ are defined analogously. Then, in $G'$, we have the $\Pi'$-facial walks
$$e_1,W'_1,e'_1,W_1,d_2,x_2,d'_2,W_2~~~\text{and}~~~e_2,W'_2,e'_2,x_2.$$
Thus, if the number of $\Pi$-faces in $G$ is $f(\Pi)$, the number of $\Pi'$-faces in $G'$ is $f(\Pi)+1$. Then, 
$$\Delta g(\Pi') = \frac{1}{2} \left( f(\hatPi')-f(\Pi')\right) = \frac{1}{2}\left(f(\hatPi)+1 - (f(\Pi)+1) \right) = \Delta g(\Pi).$$

The result for $k>2$ follows by induction on $k$ as follows. Define $H_2'$ as the union of $H_2,\dots,H_k$. Now, we use the case of $k=2$ for $H_1$ and $H_2'$, with the inductive hypothesis for $H_2'$. 
\end{proof}

Finally, we consider an unstable dual graph formed by identifying two smaller unstable duals at a 2-vertex-cut. Unlike in the case of identifying a single vertex, we must examine the facial structure to determine how the genus of the embedding can be computed using the genera of the embeddings corresponding to each of the smaller components. To do this, we require knowledge of whether a pair of stable subwalks in a particular embedding belong to the same facial walk. 

Let $G$ be a cubic graph and $\Pi$ an embedding of $G$ with unstable dual $G^*_\Pi$. Consider vertices $u$ and $v$ in $G^*_\Pi$, corresponding to faces $u$ and $v$ in the base embedding of $G$ that are $\Pi$-unstable. Consider a stable subwalk $U$ of $u$ and a stable subwalk $V$ of $v$. If both of these subwalks belong to the same $\Pi$-face, then we let $s_{\Pi}(U,V)=1$. Otherwise, we let $s_{\Pi}(U,V)=0$. 

For graphs $G_1$ and $G_2$, let $\phi(G_1,G_2)$ be the graph formed from the disjoint union of $G_1$ and $G_2$ by identifying a vertex $u_1\in V(G_1)$ with a vertex $u_2 \in V(G_2)$ and a vertex $v_1\in V(G_1)$ with a vertex $v_2\in V(G_2)$ in such a way that the edges of $G_1$ appear consecutively in the local rotation in $\phi(G_1,G_2)$ at both vertices. Suppose there exists a graph $G$ and embeddings $\Pi_1$ and $\Pi_2$ of $G$ such that $G^*_{\Pi_i} = G_i$, for $i=1,2$. Then, we let $U^\phi_1$ be the stable subwalk of the face $u_1$ in $\Pi_1$ containing the edges incident with $u_2$ in $V(G_2)$ and $U^\phi_2$ be the stable subwalk of $u_2$ in $\Pi_2$ containing the edges incident with $u_1$ in $V(G_1)$. Define $V^\phi_1$ and $V^\phi_2$ analogously for $v_1$ and $v_2$. 
Using this notation, we can determine the effect on the genus of the corresponding embedding when two unstable duals of a graph are joined at a 2-vertex-cut. 

\begin{theorem}
 \label{thm:2-cuts}
    Let $G$ be a cubic graph and $\Pi$ an embedding of $G$ with unstable dual $G^*_\Pi$. Suppose that $G^*_\Pi = \phi(G_1,G_2)$ for zero-homologous graphs $G_1$ and $G_2$. Then, there exist embeddings of $G$, $\Pi_1$ and $\Pi_2$, such that $G^*_{\Pi_i}$ is isomorphic to the graph $G_i$, for $i=1,2$.
    
    In addition, define $U^\phi_i$ and $V^\phi_i$ as above, for $i=1,2$. Then  $$\Delta g(\Pi) = \Delta g(\Pi_1)+\Delta g(\Pi_2)-s_{\Pi_1}(U^\phi_1,V^\phi_1)s_{\Pi_2}(U^\phi_2,V^\phi_2).$$
   \end{theorem}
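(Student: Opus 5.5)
The existence of $\Pi_1$ and $\Pi_2$ follows immediately from Proposition~\ref{prop:eulerian}, since $G_1$ and $G_2$ are zero-homologous subgraphs of $G^*$. For the genus formula, I will mimic the proof of Theorem~\ref{thm:cut vertex dual}: modify $G$ locally inside the two shared faces so that the unstable dual becomes the disjoint union of copies of $G_1$ and $G_2$, apply Proposition~\ref{prop:disconnected dual}, and then count how the number of faces changes.

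\textbf{Construction.} Let $u=u_1=u_2$ and $v=v_1=v_2$ be the two identified faces of $\hatPi$. Because the edges of $G_2$ are consecutive in the local rotation at $u$, I subdivide the first and last $G_2$-edges on the boundary of $u$ (call them $e_2,e_2'$) and join the two new vertices by a chord $x_u$ drawn inside $u$. I do the same inside $v$ with a chord $x_v$. I choose the stability of the new vertices as in Theorem~\ref{thm:cut vertex dual}, so that $e_2,e_2'$ stay unstable, the new half-edges $d_2,d_2'$ are stable, and $x_u,x_v$ are stable. This gives a cubic graph $G'$ with base embedding $\hatPi'$ of the same genus and two more faces. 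Its embedding $\Pi'$ has unstable dual isomorphic to $G_1\sqcup G_2$: the face $u$ splits into a $G_1$-part and a $G_2$-part, and likewise $v$. Proposition~\ref{prop:disconnected dual} then gives $\Delta g(\Pi')=\Delta g(\Pi_1)+\Delta g(\Pi_2)$. A small point to check is that the $\Pi'$-faces restricted to the $G_i$-side behave exactly as the $\Pi_i$-faces of $G$. Here the chord $x_u$ plays the role of the stable subwalk $U^\phi_i$ of the other side, so $U^\phi_1$ corresponds to the stable subwalk through $d_2,x_u,d_2'$.

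\textbf{Comparing $\Pi$ with $\Pi'$.} We have $f(\hatPi')=f(\hatPi)+2$, so it suffices to show
\[
f(\Pi')=f(\Pi)+2-2\,s_{\Pi_1}(U^\phi_1,V^\phi_1)\,s_{\Pi_2}(U^\phi_2,V^\phi_2),
\]
which yields $\Delta g(\Pi)=\Delta g(\Pi')-s_{\Pi_1}(U^\phi_1,V^\phi_1)s_{\Pi_2}(U^\phi_2,V^\phi_2)$. Going from $\Pi'$ back to $\Pi$ amounts to deleting the two stable chords $x_u$ and $x_v$. Deleting a chord either merges two distinct faces or splits one face, changing the face count by $-1$ or $+1$ respectively.

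\textbf{Case analysis for the deletions.} In $\Pi'$, each chord lies on one face coming from the $G_1$-side, containing $U^\phi_1$ or $V^\phi_1$, and one face from the $G_2$-side, containing $U^\phi_2$ or $V^\phi_2$. Since the two sides are disjoint, these faces are distinct, so deleting $x_u$ merges them and loses one face. Now delete $x_v$.
\begin{itemize}
\item If $s_{\Pi_1}s_{\Pi_2}=0$, say $U^\phi_1$ and $V^\phi_1$ lie on different $\Pi_1$-faces, then the two faces at $x_v$ remain distinct after the first merge. Deleting $x_v$ loses another face, so $f(\Pi)=f(\Pi')-2$.
\item If both $s$-values equal $1$, the first merge already placed both sides of $x_v$ on a single face. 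Deleting $x_v$ then splits that face, a net change of $0$, so $f(\Pi)=f(\Pi')$.
\end{itemize}

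\textbf{Main obstacle.} The hardest step is the last case. I must verify, using the fact that $G_1$ and $G_2$ are consecutive at both $u$ and $v$ and tracing the signed face walk, that when the single merged face passes through $x_v$ twice, the two traversals are in opposite directions. Only then does deleting $x_v$ give a genuine split rather than a face whose walk is merely rerouted. I would argue this from orientability: in an orientable embedding every edge is traversed once in each direction. Combined with the explicit form of the merged walk, $\dots U^\phi_1\dots V^\phi_1\dots V^\phi_2\dots U^\phi_2\dots$, this should show that removing $x_v$ separates the walk into exactly two closed walks.
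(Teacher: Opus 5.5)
Your proposal is correct, and it takes a genuinely different route from the paper's proof.

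\textbf{The paper's route.} The paper builds no auxiliary graph for this theorem. It works directly in $G$. In each of the three cases for the pair of $s$-values, it writes out the $\Pi_1$- and $\Pi_2$-facial walks through $U^\phi_i$ and $V^\phi_i$. It cuts $U^\phi_1\cap U^\phi_2$ and $V^\phi_1\cap V^\phi_2$ into the pieces $U^\phi_{1,2},U^\phi_{2,1},V^\phi_{1,2},V^\phi_{2,1}$. It then exhibits how these walks splice into $\Pi$-faces: four walks become two, three become one, or two become two. The proof ends with bookkeeping in terms of $f_s$ and $|V(G_1)|+|V(G_2)|-2$.

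\textbf{Your route.} You carry the chord construction from the proof of Theorem~\ref{thm:cut vertex dual} into both identified faces. You obtain $\Delta g(\Pi')=\Delta g(\Pi_1)+\Delta g(\Pi_2)$ from Proposition~\ref{prop:disconnected dual}. You then read off the correction term from the elementary fact that deleting an edge of a rotation system either merges two faces or splits one. The product $s_{\Pi_1}(U^\phi_1,V^\phi_1)\,s_{\Pi_2}(U^\phi_2,V^\phi_2)$ is exactly the event that the second deletion is a split. This treats all three cases uniformly, makes the source of the $-1$ transparent, and unifies the two decomposition theorems. The paper instead must exhibit a specific splicing in each case.

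\textbf{What you still owe.} The price is verifying the auxiliary construction, which you only sketch, as the paper does in the cut-vertex case.
\begin{itemize}
\item The chord $x_u$ can be made stable. The two ends of the $G_1$-stretch of the boundary of $u$ have the same $\Pi$-stability, because the only unstable edges on that stretch are the $G_1$-edges at $u$, and $\deg_{G_1}(u)$ is even.
\item $G_1$ and $G_2$ remain zero-homologous in $\hatPi'$, which Proposition~\ref{prop:disconnected dual} needs.
\item The $G_i$-side $\Pi'$-faces are the $\Pi_i$-faces with $U^\phi_i,V^\phi_i$ shortcut through the chords. Hence both $\Delta g$ and the $s$-values transfer. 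For $\Delta g$, note that in the embedding of $G'$ whose unstable dual is $G_1$ alone, the $G_2$-halves of $u$ and $v$ are stable faces. Deleting the two chords is then two merges, exactly as in $\hatPi'$.
\end{itemize}

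\textbf{The step you flag as the main obstacle.} It is automatic. In an orientable rotation system each dart lies on exactly one facial walk. So a face carrying both darts of $x_v$ has the form $x_v^{+}Ax_v^{-}B$, and deleting $x_v$ leaves precisely the two closed walks $A$ and $B$. You do not need the explicit shape of the merged walk.
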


\begin{proof}
First, note that by Proposition \ref{prop:eulerian}, there exist embeddings, $\Pi_1$ and $\Pi_2$, of $G$ such that $G^*_{\Pi_i}$ is the graph $G_i$, for $i=1,2$. 

Now, if $f(\hatPi)$ is the number of faces in the base embedding and $f(\Pi)$, $f(\Pi_1)$ and $f(\Pi_2)$ are the number of faces in $\Pi$, $\Pi_1$ and $\Pi_2$, respectively, then $\Delta g(\Pi_1) = \frac{1}{2}(f(\hatPi)-f(\Pi_1))$, $\Delta g(\Pi_2) = \frac{1}{2}(f(\hatPi)-f(\Pi_2))$, and $\Delta g(\Pi) = \frac{1}{2}(f(\hatPi)-f(\Pi))$. Let $f_s$ be the number of $\Pi$-stable faces in the base embedding of $G$. These faces correspond to the vertices of $G^*$ that do not appear as vertices of $G^*_\Pi$. Then, these faces must also be $\Pi_1$-stable and $\Pi_2$-stable, since they do not appear as vertices in $G_1$ or in $G_2$. Note that $|V(G_1)|+|V(G_2)|-2=|V(G^*_\Pi)|$. Then, we can rewrite each of the above equations as $\Delta g(\Pi_1) = \frac{1}{2}(f_s+|V(G_1)|+|V(G_2)|-2-f(\Pi_1))$, $\Delta g(\Pi_2) = \frac{1}{2}(f_s+|V(G_1)|+|V(G_2)|-2-f(\Pi_2))$, and $\Delta g(\Pi) = \frac{1}{2}(f_s+|V(G^*_\Pi)|-f(\Pi))$. We consider each of the three cases for $s_{\Pi_1}(U^\phi_1,V^\phi_1)$ and $s_{\Pi_2}(U^\phi_2,V^\phi_2)$. 

First, suppose that $s_{\Pi_1}(U^\phi_1,V^\phi_1)=s_{\Pi_2}(U^\phi_2,V^\phi_2)=0$, so $U^\phi_1$ and $V^\phi_1$ belong to different $\Pi_1$-faces, and $U^\phi_2$ and $V^\phi_2$ belong to different $\Pi_2$-faces. In $\Pi_1$, we have the facial walks 
$$e_1,U^\phi_1,e_2,W_1 \text{~~and~~}
f_1,V^\phi_1,f_2,W_2$$
where $e_1,e_2$ are edges in the $\hatPi$-face $u$, and $f_1,f_2$ are edges in the $\hatPi$-face $v$. Similarly, in $\Pi_2$, we have the facial walks 
$$h_1,U^\phi_2,h_2,W_3 \text{~~and~~}
k_1,V^\phi_2,k_2,W_4$$ where $h_1,h_2$ are edges in $u$ and $k_1,k_2$ are edges in $v$. Let $U^\phi_{1,2}$ be the subwalk of $U^\phi_1\cap U^\phi_2$ between the edges $e_1$ and $h_2$, and $U^\phi_{2,1}$ the subwalk of $U^\phi_1\cap U^\phi_2$ between the edges $h_1$ and $e_2$. Define $V^\phi_{1,2}$ and $V^\phi_{2,1}$ similarly. Then, in $\Pi$, we have the facial walks
$$e_1,U^\phi_{1,2},h_2,W_3,h_1,U^\phi_{2,1},e_2,W_1 \text{~~and~~}
f_1,V^\phi_{1,2},k_2,W_4,k_1,V^\phi_{2,1},f_2,W_2.$$
See Figure~\ref{fig:2cut}. 
The other facial walks in $\Pi_1$ or $\Pi_2$ remain facial walks in $\Pi$, including the stable facial walks. Thus, $f(\Pi) = f(\Pi_1)+f(\Pi_2)-2-(f_s+|V(G_1)|-2+|V(G_2)|-2)$, where the latter terms count the faces that are $\Pi_1$-stable or $\Pi_2$-stable to avoid double-counting. Then, using the equations above, we have 
$$\Delta g(\Pi) = \frac{1}{2}(f_s + |V(G^*_\Pi)|-f(\Pi))$$
$$ = \frac{1}{2}(f_s+|V(G_1)|+|V(G_2)|-2-\left(f(\Pi_1)+f(\Pi_2)-2-(f_s+|V(G_1)|-2+|V(G_2)|-2)\right))$$
$$ = \frac{1}{2}(f_s+|V(G_1)|+|V(G_2)|-2-f(\Pi_1) + f_s+|V(G_1)|+|V(G_2)|-2-f(\Pi_2))$$
$$ = \Delta g(\Pi_1)+\Delta g(\Pi_2).$$

 \begin{figure}
    \centering
    \begin{subfigure}[b]{0.45\textwidth}
         \centering
         \includegraphics[width=0.75\textwidth]{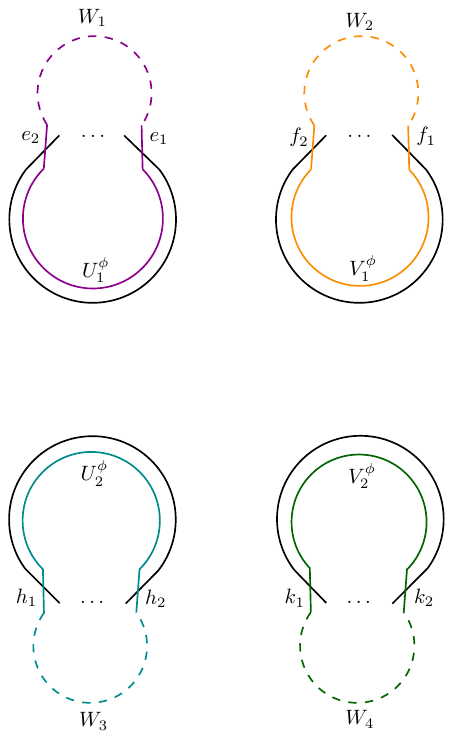}
         \caption{Facial walks in $\Pi_1$ (top) and $\Pi_2$ (bottom)}
         \label{fig:2cut1}
     \end{subfigure}
     \begin{subfigure}[b]{0.45\textwidth}
         \centering
         \includegraphics[width=0.75\textwidth]{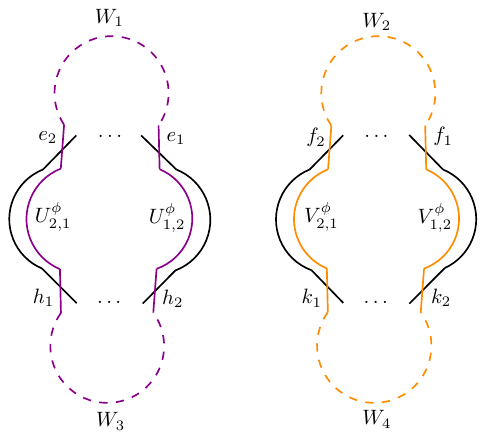}
         \caption{Facial walks in $\Pi$}
         \label{fig:2cut2}
     \end{subfigure}
    \caption{Two distinct facial walks in each of $\Pi_1$ and $\Pi_2$ become two facial walks in $\Pi$.}
    \label{fig:2cut}
\end{figure}

Now, suppose without loss of generality that $s_{\Pi_1}(U^\phi_1,V^\phi_1)=1$ and $s_{\Pi_2}(U^\phi_2,V^\phi_2)=0$. In $\Pi_1$, we have the facial walk 
$$e_1,U^\phi_1,e_2,W_1,f_1,V^\phi_1,f_2,W_2$$
where $e_1,e_2$ are edges in the $\hatPi$-face $u$, and $f_1,f_2$ are edges in the $\hatPi$-face $v$. In $\Pi_2$, we have the facial walks 
$$h_1,U^\phi_2,h_2,W_3 \text{~~and~~}
k_1,V^\phi_2,k_2,W_4$$ where $h_1,h_2$ are edges in $u$ and $k_1,k_2$ are edges in $v$. Define $U^\phi_{1,2}$, $U^\phi_{2,1}$, $V^\phi_{1,2}$ and $V^\phi_{2,1}$ as above. Then, in $\Pi$, we have the facial walk
$$e_1,U^\phi_{1,2},h_2,W_3,h_1,U^\phi_{2,1},e_2,W_1,f_1,V^\phi_{1,2},k_2,W_4,k_1,V^\phi_{2,1},f_2,W_2.$$
The other facial walks in $\Pi_1$ or $\Pi_2$ remain facial walks in $\Pi$, including the stable facial walks. Thus, $f(\Pi) = f(\Pi_1)+f(\Pi_2)-2-(f_s+|V(G_1)|-2+|V(G_2)|-2)$ and, using the same calculations as above, we have 
$$ \Delta g(\Pi)= \Delta g(\Pi_1)+\Delta g(\Pi_2).$$

Finally, suppose that $s_{\Pi_1}(U^\phi_1,V^\phi_1)=s_{\Pi_2}(U^\phi_2,V^\phi_2)=1$. In $\Pi_1$, we have the facial walk 
$$e_1,U^\phi_1,e_2,W_1,f_1,V^\phi_1,f_2,W_2$$
where $e_1,e_2$ are edges in the $\hatPi$-face $u$, and $f_1,f_2$ are edges in the $\hatPi$-face $v$. In $\Pi_2$, we have the facial walk
$$h_1,U^\phi_2,h_2,W_3,k_1,V^\phi_2,k_2,W_4$$ where $h_1,h_2$ are edges in $u$ and $k_1,k_2$ are edges in $v$. Define $U^\phi_{1,2}$, $U^\phi_{2,1}$, $V^\phi_{1,2}$ and $V^\phi_{2,1}$ as above. Then, in $\Pi$, we have the facial walks
$$e_1,U^\phi_{1,2},h_2,W_3,k_1,V^\phi_{2,1},f_2,W_2 \text{~~and~~}
h_1,U^\phi_{2,1},e_2,W_1,f_1,V^\phi_{1,2},k_2,W_4.$$
The other facial walks in $\Pi_1$ or $\Pi_2$ remain facial walks in $\Pi$, including the stable facial walks. Thus, $f(\Pi) = f(\Pi_1)+f(\Pi_2)-(f_s+|V(G_1)|-2+|V(G_2)|-2)$. Note the difference of two from the previous cases. Then, using the equations above, we have 
$$\Delta g(\Pi) = \frac{1}{2}(f_s + |V(G^*_\Pi)|-f(\Pi))$$
$$ = \frac{1}{2}(f_s+|V(G_1)|+|V(G_2)|-2-\left(f(\Pi_1)+f(\Pi_2)-2-(f_s+|V(G_1)|+|V(G_2)|-2)\right))$$
$$ = \frac{1}{2}(f_s+|V(G_1)|+|V(G_2)|-2-f(\Pi_1) + f_s+|V(G_1)|+|V(G_2)|-2-f(\Pi_2) - 2)$$
$$ = \Delta g(\Pi_1)+\Delta g(\Pi_2) - 1.$$

In general, these cases show that $\Delta g(\Pi) = \Delta g(\Pi_1)+\Delta g(\Pi_2)-s_{\Pi_1}(U^\phi_1,V^\phi_1)s_{\Pi_2}(U^\phi_2,V^\phi_2)$. 
\end{proof}

Consider the unstable dual graph $G^*_\Pi$ shown in Figure \ref{fig:dualcutex1} and let $G$ be a cubic 3-connected planar graph. The unstable dual graph can be constructed from the two graphs shown in Figure \ref{fig:dualcutex2} using the operation $\phi$ described above. Let $\Pi_1$ be the embedding corresponding to the component on the left, and $\Pi_2$ the embedding corresponding to the component on the right. Then, by Theorem \ref{thm:2-cuts}, $\Delta g(\Pi)=\Delta g(\Pi_1)+\Delta g(\Pi_2)-s_{\Pi_1}(U^\phi_1,V^\phi_1)s_{\Pi_2}(U^\phi_2,V^\phi_2)$. The graph $G^*_{\Pi_1}$ can be further split at the cut vertex into two copies of $K_3$, corresponding to embeddings $\Pi_{1,1}$ and $\Pi_{1,2}$ of $G$. Each of these corresponds to an unstable cut of size 3, so by Corollary \ref{cor:g1cut}, $\Delta g(\Pi_{1,1}) = \Delta g(\Pi_{1,2})=1$. Then, by Theorem \ref{thm:cut vertex dual}, we have that $\Delta g(\Pi_1) = \Delta g(\Pi_{1,1})+\Delta g(\Pi_{1,2})=2$. By tracing the appropriate faces in the corresponding embeddings, we can see that $s_{\Pi_1}(U^\phi_1,V^\phi_1)=s_{\Pi_2}(U^\phi_2,V^\phi_2)=1$. Therefore, $\Delta g(\Pi) = 2+\Delta g(\Pi_2)-1 = 2+1-1=2$. 

 \begin{figure}[!h]
    \centering
    \begin{subfigure}[b]{0.45\textwidth}
         \centering
         \includegraphics[width=0.65\textwidth]{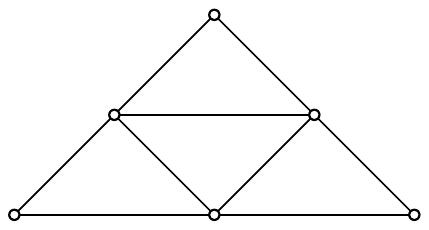}
         \caption{$G^*_\Pi$}
         \label{fig:dualcutex1}
     \end{subfigure}
     \begin{subfigure}[b]{0.45\textwidth}
         \centering
         \includegraphics[width=0.85\textwidth]{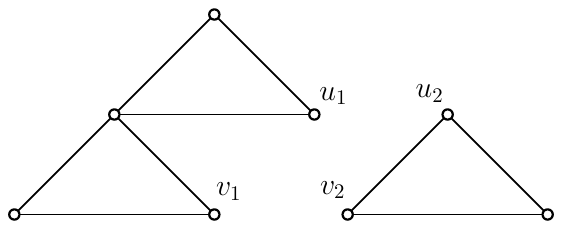}
         \caption{$G^*_{\Pi_1}$ and $G_{\Pi_2}$}
         \label{fig:dualcutex2}
     \end{subfigure}
     \begin{subfigure}[b]{0.45\textwidth}
         \centering
         \includegraphics[width=0.85\textwidth]{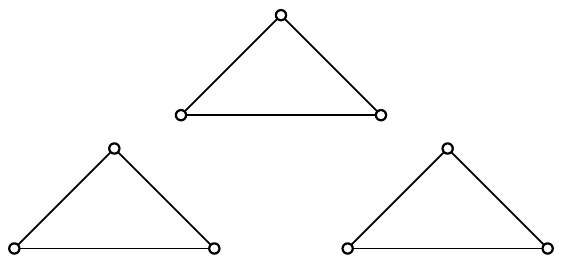}
         \caption{$G_{\Pi_{1,1}}$, $G_{\Pi_{1,2}}$ and $G_{\Pi_2}$}
         \label{fig:dualcutex3}
     \end{subfigure}
    \caption{Splitting $G^*_\Pi$ at a 2-cut and again at a cut vertex to determine the genus of the corresponding embedding. }
    \label{fig:dualcutex}
\end{figure}

The dual graph of a cubic graph is a simple graph when the base embedding is polyhedral. However, loops\footnote{Here and throughout the paper we consider the term \emph{loop} in the sense of graph theory as an edge incident with a single vertex.} and double edges may be present in the dual graph of a cubic graph whose base embedding is not polyhedral.

\begin{proposition}
\label{prop:loops}Let $G$ be a cubic graph with base embedding $\hatPi$, and let $\Pi$ be another embedding of $G$. Let $G^*_\Pi$ be the $\Pi$-unstable dual of $G$. If $G^*_\Pi$ contains a zero-homologous loop $e^*$ at a vertex $f\in V(G^*_\Pi)$, then there exists an embedding $\Pi'$ of $G$ such that $g(\Pi) = g(\Pi')$ and  
    \[ G^*_{\Pi'} = \begin{cases} 
      G^*_\Pi-f & \text{if~}d_{G^*_\Pi}(f)=2 \\
      G^*_\Pi-e^* & \text{otherwise.} 
   \end{cases}
\] 
\end{proposition}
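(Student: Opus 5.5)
We need to understand the setting. A loop $e^*$ in the dual at $f$ means the edge $e$ of $G$ has face $f$ on both sides in $\hatPi$. Since $e^*$ is a loop that is zero-homologous, removing $e$ from $G$ disconnects $G$, i.e.\ $e=xy$ is a bridge. Because $e$ is $\Pi$-unstable, exactly one endpoint, say $x$, lies in $\Unst_\Pi$. The plan is to change the unstable set only on one side of the bridge so that $e$ becomes stable. Concretely, let $X$ be the component of $G-e$ containing $x$. We define $\Pi'$ by complementing stability on $X$: take $\Unst_{\Pi'}=\Unst_\Pi\,\triangle\, V(X)$. Then every edge inside $X$ and inside $G-V(X)$ keeps its stability status, while $e$ becomes $\Pi'$-stable. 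Hence $G^*_{\Pi'}$ has edge set $E(G^*_\Pi)\setminus\{e^*\}$. If $d_{G^*_\Pi}(f)=2$, the only edge at $f$ was the loop, so $f$ becomes a $\Pi'$-stable face and $G^*_{\Pi'}=G^*_\Pi-f$. Otherwise $f$ remains incident with other unstable edges and $G^*_{\Pi'}=G^*_\Pi-e^*$. (If one prefers the normalization $|\Unst|\le n/2$, replace by the complement; this only reverses all local rotations and preserves the genus.)

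The main step is showing $g(\Pi)=g(\Pi')$. Flipping every local rotation in $X$ corresponds to taking the mirror image of the embedded part $X$. I would argue via face tracing with the signature $f$. Any $\Pi$-facial walk that never uses $e$ lies entirely on one side and is traced identically in $\Pi'$, up to reversal of orientation for walks inside $X$. Reversal does not change the count of faces. For the walks through $e$, note that since $e$ is a bridge, every facial walk traverses $e$ twice, once in each direction. Such a facial walk decomposes as $e, W_Y, e^{-1}, W_X$, where $W_X$ is a closed walk in $X$ at $x$ and $W_Y$ is a closed walk in $G-V(X)$ at $y$. In $\Pi'$, the segment $W_X$ is replaced by its reverse, a closed walk in $X$ from $x$ to $x$ that uses the other ``side'' of $x$. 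This is the same as flipping the rotation of $X$ as a whole. Equivalently, in $\Pi$ and $\Pi'$ the local rotation at $x$, restricted to the two edges of $X$ at $x$, is irrelevant up to orientation: with degree 3, the rotation at $x$ is determined by $e$ and two other edges, and both cyclic orders give the same pairing of angles up to reflection of $X$.

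The cleanest route is probably this. Contract the problem to the standard fact that the faces of an embedding of a graph with a bridge $e=xy$ correspond to the faces of the induced embeddings of $X+e$ and $Y+e$, glued along the face containing $e$. Therefore
\[
f(\Pi) = f_X + f_Y - 1,
\]
where $f_X$ is the number of faces of the induced embedding of $X$ and $f_Y$ that of $Y=G-V(X)$. Reversing all rotations in $X$ gives the mirror embedding of $X$, which has the same number of faces. So $f(\Pi')=f(\Pi)$, and hence $g(\Pi')=g(\Pi)$ by Euler's formula.

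The expected obstacle is justifying the gluing formula carefully in rotation-system terms, in particular at the degree-3 vertex $x$ when the face structure at $x$ is of Type 2 (the face through $e$ visits $x$ twice). I would handle this by suppressing $x$ after deleting $e$, i.e.\ regarding $x$ as a degree-2 vertex in $X$ where rotation is trivial.
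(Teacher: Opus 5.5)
Your proposal is correct and follows essentially the same route as the paper. You build the same $\Pi'$: you reverse all local rotations on the component of $G-e$ containing the unstable endpoint, so that exactly $e$ changes from unstable to stable. You do the same bookkeeping for $G^*_{\Pi'}$. Your tracing of the unique face through the bridge, $e,W_Y,e^{-1},W_X \mapsto e,W_Y,e^{-1},W_X^{-1}$, is precisely the paper's $eW_1eW_2\mapsto eW_1^{-1}eW_2$.

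Your preferred count $f(\Pi)=f_X+f_Y-1$, together with the invariance of $f_X$ under mirroring $X$, is a valid repackaging of that same observation. It is correct, including at $x$ once you treat $x$ as a degree-2 vertex of $X$.
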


\begin{proof}
    From $\Pi$, we construct a new embedding $\Pi'$ and show that it satisfies the required conditions. The loop $e^*\in E(G^*_\Pi)$ corresponds to a $\Pi$-unstable edge, $e\in E(G)$, that appears twice on the same face in the base embedding $\widehat{\Pi}$.

    Since $e^*$ is zero-homologous, $e$ must be a cut-edge in $G$, with one $\Pi$-stable endpoint, $u_1$, and one $\Pi$-unstable endpoint, $u_2$. Let $U_1$ and $U_2$ be the two components of $G-e$, with $u_1\in V(U_1)$ and $u_2\in V(U_2)$. Now, let $\Pi'$ be the embedding of $G$ with $\pi'_v = \pi_v$ if $v\in V(U_1)$ and $\pi'_v = \pi^{-1}_v$ if $v\in V(U_2)$. Then, every $\Pi$-unstable vertex in $U_2$ is $\Pi'$-stable, and vice versa. The stability of each vertex in $U_1$ is unchanged. 

    The only edge connecting $U_1$ and $U_2$ in $G$ is the edge $e$, which has two $\Pi'$-stable endpoints and is thus $\Pi'$-stable. The $\Pi'$-stability of any edge with both endpoints in $U_1$ or both endpoints in $U_2$ is the same as the $\Pi$-stability. Therefore, if $\delta(U_\Pi)$ and $\delta(U_{\Pi'})$ are the sets of $\Pi$-unstable edges and $\Pi'$-unstable edges, respectively, then we have $\delta(U_{\Pi'}) = \delta(U_\Pi)\setminus\{e\}$. 

    We now show that $G^*_{\Pi'}$ satisfies the desired conditions. If $d_{G^*_\Pi}(f)=2$, then $e$ must be the only $\Pi$-unstable edge on the face $f$ in $G$. This means that in the embedding $\Pi'$, the face $f$ has no $\Pi'$-unstable edges on its boundary and is therefore $\Pi'$-stable. Thus, $f\not\in V(G^*_{\Pi'})$. Therefore, we have $E(G^*_{\Pi'}) = E(G^*_\Pi)\setminus\{e^*\}$ and $V(G^*_{\Pi'}) = V(G^*_\Pi)\setminus\{f\}$. 

    If $d_{G^*_\Pi}(f)>2$, then the face $f$ must contain at least one other $\Pi$-unstable edge, which is also $\Pi'$-unstable. So $f$ is a $\Pi'$-unstable face and appears as a vertex of $G^*_{\Pi'}$. In this case, we have $E(G^*_{\Pi'})=E(G^*_\Pi)\setminus\{e^*\}$, and $V(G^*_{\Pi'})=V(G^*_\Pi)$. 

    Therefore, $G^*_{\Pi'}$ is obtained from $G^*_\Pi$ by removing $e^*$ (and removing the incident vertex $f$ if $d_{G^*_\Pi}(f)=2$), as claimed. 

     Finally, we show that $g(\Pi)=g(\Pi')$, by showing that the two embeddings have the same number of faces. First, we note that if a face $F$ in $\Pi$ does not contain the edge $e$, then it is also a face in $\Pi'$. 
     
     \begin{figure}
    \centering
    \begin{subfigure}[b]{0.48\textwidth}
         \centering
         \includegraphics[width=0.9\textwidth]{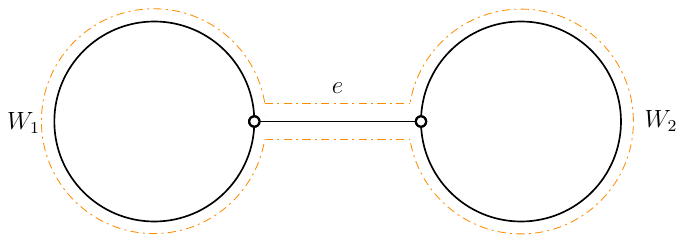}
         \caption{The face $f'$ in $\Pi'$}
         \label{fig:dualloop1}
     \end{subfigure}
\hfill
     \begin{subfigure}[b]{0.48\textwidth}
         \centering
         \includegraphics[width=0.9\textwidth]{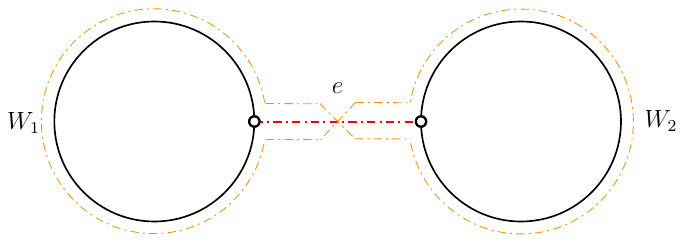}
         \caption{The face $f$ in $\Pi$}
         \label{fig:dualloop2}
     \end{subfigure}
    \caption{The effect of changing the stability at an edge in $G$ corresponding to a loop in $G^*$}
    \label{fig:dualloop}
\end{figure}

     Now, consider the face $f'$ in $\Pi'$ that contains the edge $e$. Since the edge $e$ appears twice in the boundary of $f'$, we can express $f'$ as $eW_1eW_2$, where $W_1$ and $W_2$ are the subwalks of the facial walk between appearances of $e$, shown in Figure \ref{fig:dualloop1}. Denote by $W_1^{-1}$ and $W_2^{-1}$ the subwalks $W_1$ and $W_2$ traversed in the opposite direction. Then, by changing the stability of only the edge $e$, we obtain the facial walk $f$: $eW_1^{-1}eW_2$, shown in Figure \ref{fig:dualloop2}. Thus, the face $f'$ in $\Pi'$ becomes one face $f$ in $\Pi$. Since the number of faces is the same in $\Pi$ as in $\Pi'$, the genus of these two embeddings must be the same. 
\end{proof}

 We have that an edge $e^*$ in the unstable dual graph $G^*_\Pi$ is a zero-homologous loop if and only if the corresponding edge $e$ in $G$ is a cut-edge. Thus, we may consider only 2-connected graphs in the remainder of the section. 

 Next, we discuss 2-cuts in $G$. Recall that edges $e_1,e_2$ form a 2-cut in $G$ if and only if their dual edges $e_1^*,e_2^*$ form a zero-homologous digon in $G^*$. We show that removing a zero-homologous digon from the unstable dual also has no effect on the genus of the corresponding embedding. 

\begin{proposition}
    Let $G$ be a cubic graph with base embedding $\hatPi$ and let $\Pi$ be another embedding of $G$. Let $G^*_\Pi$ be the $\Pi$-unstable dual of $G$. Suppose that $G^*_\Pi$ contains a zero-homologous digon with vertices $f$ and $f_1$. 
    Then there exists an embedding $\Pi'$ of $G$ such that $g(\Pi)=g(\Pi')$ and $G^*_{\Pi'}$ is obtained from $G^*_\Pi$ by removing the edges of the digon (and removing $f$ or $f_1$ if it becomes of degree zero).
 \label{prop:digons}
\end{proposition}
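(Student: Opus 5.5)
Following the proof of Proposition~\ref{prop:loops}, we build $\Pi'$ from $\Pi$ by inverting the local rotations on one side of a $2$-edge-cut, and then count faces.

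\textbf{Setting up the cut.} The digon consists of dual edges $e_1^*,e_2^*$ joining $f$ and $f_1$. Since it is zero-homologous, $\{e_1,e_2\}$ is a $2$-edge-cut of $G$. Let $U_1,U_2$ be the two components of $G-\{e_1,e_2\}$. Both $e_1$ and $e_2$ are $\Pi$-unstable, so each has exactly one $\Pi$-unstable endpoint. Define $\Pi'$ by $\pi'_v=\pi_v$ for $v\in V(U_1)$ and $\pi'_v=\pi_v^{-1}$ for $v\in V(U_2)$. Equivalently, the stability of every vertex of $U_2$ is toggled and $U_1$ is left alone.

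\textbf{Effect on the unstable edges and the unstable dual.} Edges inside $U_1$ or inside $U_2$ keep their stability, since both endpoints are toggled or neither is. Each $e_i$ has its $U_2$-endpoint toggled, so it changes from unstable to stable. Hence $\delta(\Unst_{\Pi'})=\delta(\Unst_\Pi)\setminus\{e_1,e_2\}$. (If $\Unst_{\Pi'}$ becomes all of $V(G)$, replace it by its complement; this gives the same cut.) A face $h$ of $\hatPi$ remains $\Pi'$-unstable exactly when it still contains an unstable edge. Only $f$ and $f_1$ lose unstable edges, and each loses exactly two. So $G^*_{\Pi'}$ is $G^*_\Pi$ with the two digon edges removed, and $f$ (respectively $f_1$) is removed precisely when its degree was $2$, as the statement requires.

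\textbf{Genus.} We need $f(\Pi)=f(\Pi')$. A $\Pi$-face not using $e_1$ or $e_2$ lies entirely in $U_1$ or entirely in $U_2$. In the $U_2$ case it becomes the same face traversed in reverse, because all rotations there are inverted. So such faces correspond bijectively between $\Pi$ and $\Pi'$. The faces using $e_1$ or $e_2$ are what remain. Each such face traverses $e_1$ and $e_2$ twice in total, so it has one of two forms:
\begin{itemize}
\item one face $e_1W_1e_2W_2e_1W_3e_2W_4$, or $e_1W_1e_1W_2$-type patterns where one edge is repeated;
\item two faces, $e_1A\,e_2B$ and $e_1C\,e_2D$, with $A,C\subseteq U_2$ and $B,D\subseteq U_1$.
\end{itemize}
Passing from $\Pi'$ to $\Pi$ reverses every maximal subwalk inside $U_2$ and keeps those inside $U_1$. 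As in Figure~\ref{fig:dualloop}, a face $e_1Ae_2B$ of $\Pi'$ becomes $e_2A^{-1}e_1B$, which is again a single closed walk. The $U_1$-strands and $U_2$-strands are reconnected by the same two cut edges, with only the orientation of the $U_2$-strands flipped, so the number of closed walks is unchanged.

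\textbf{Main obstacle.} The hard step is making this last argument rigorous. One has to check that reversing the $U_2$-strands does not change how they pair up. That is, the strand of $U_2$ entering through $e_1$ must still exit through the same edge $e_2$ (or $e_1$) after inversion. This holds because inverting all rotations in $U_2$ turns each facial strand of $U_2$ into its reverse, with the same pair of end half-edges. I would state this strand lemma explicitly, then finish the count exactly as in the proof of Proposition~\ref{prop:loops}, concluding $f(\Pi)=f(\Pi')$ and therefore $g(\Pi)=g(\Pi')$.
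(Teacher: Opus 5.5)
Your construction of $\Pi'$ (inverting every rotation on one side of the 2-edge-cut $\{e_1,e_2\}$) and your check that $\delta(\Unst_{\Pi'})=\delta(\Unst_\Pi)\setminus\{e_1,e_2\}$, so that $G^*_{\Pi'}$ is $G^*_\Pi$ minus the digon, are the same as the paper's. The face count is where you differ.

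The paper writes the $\Pi'$-faces through the cut as $e_1W_1e_2W_2$ and $e_1W_3e_2W_4$ and recombines them by hand. This tacitly assumes that $e_1$ and $e_2$ lie on two distinct $\Pi'$-faces, each using both edges once. That holds in $\hatPi$ (because $f\neq f_1$), but not necessarily in $\Pi'$, whose rotations inside $U_1$ need not agree with $\hatPi$. For example, each $U_1$-strand may leave through the cut edge it entered by while each $U_2$-strand crosses from one cut edge to the other; then a single $\Pi'$-face carries all four traversals. Your strand argument handles every configuration at once, which is what it gains over the paper; the paper's version gains an explicit description of the new faces.

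Two points need repair.

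First, delete the claim that a face $e_1Ae_2B$ of $\Pi'$ becomes $e_2A^{-1}e_1B$. That is not even a walk: $A^{-1}$ ends at $y_1$, so after $e_1$ you are at $x_1$, while $B$ starts at $x_2$. Faces are not preserved one by one. In the two-face case, $e_1Ae_2B$ and $e_2Ce_1D$ exchange their $U_2$-strands and become $e_1C^{-1}e_2B$ and $e_2A^{-1}e_1D$, which is the recombination in the paper's proof. Your case list before that sentence is also garbled: it is the crossing faces together, not each one, that use each of $e_1,e_2$ twice.

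Second, your justification of the strand lemma (``each strand becomes its reverse with the same ends'') does not by itself give the conclusion. Reversal swaps the entry and exit of each strand. So if $\sigma$ sends the entry edge of each $U_2$-strand to its exit edge in $\Pi'$, the corresponding map in $\Pi$ is $\sigma^{-1}$. What makes the argument work is that $\sigma$ is a permutation of the two-element set $\{e_1,e_2\}$, so $\sigma^{-1}=\sigma$. This is exactly where the two-edge hypothesis is used. For a single flipped vertex, the three-edge analogue of $\sigma$ is a 3-cycle, $\sigma^{-1}\neq\sigma$, and the genus can change. Add that sentence, and note that the faces crossing the cut correspond to the cycles of $\tau\sigma$, where $\tau$ is the unchanged $U_1$-side map. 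Then $f(\Pi)=f(\Pi')$ and your proof is complete.
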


\begin{proof}
Let $e^*_1$ and $e^*_2$ be the two edges joining $f$ and $f_1$ in $G^*_\Pi$. 
Now, from $\Pi$, we construct a new embedding $\Pi'$ and show that it satisfies the required conditions. The parallel edges $e^*_1$ and $e^*_2$ correspond to a pair of $\Pi$-unstable edges, $e_1$ and $e_2$, with respect to $\widehat{\Pi}$ that both lie on the two faces $f$ and $f_1$ in $\hatPi$. 

\begin{figure}[ht]
    \centering
         \includegraphics[width=0.35\textwidth]{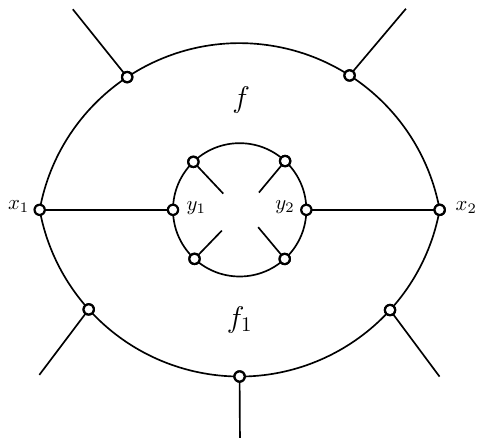}        
    \caption{The structure and labelling of two faces that share a pair of $\Pi$-unstable edges in the proof of Theorem~\ref{prop:digons}. }
    \label{fig:digon}
\end{figure}

Since the pair of dual parallel edges are zero-homologous, $e_1$ and $e_2$ must form a 2-cut in $G$. Let $X$ and $Y$ be the two components of $G-\{e_1,e_2\}$. Label the endpoints $e_1=x_1y_1$ and $e_2=x_2y_2$ with $x_1,x_2\in V(X)$ and $y_1,y_2\in V(Y)$, as shown in Figure \ref{fig:digon}. Now, let $\Pi'$ be the embedding of $G$ with $\pi'_v = \pi_v$ if $v\in V(X)$ and $\pi'_v = \pi_v^{-1}$ if $v\in V(Y)$. Then, every $\Pi$-unstable vertex in $Y$ is $\Pi'$-stable, and vice versa. The stability of each vertex in $X$ is unchanged. 

There are only two edges connecting $X$ and $Y$ in $G$: the edges $e_1$ and $e_2$, which are $\Pi'$ stable, since the stability of exactly one endpoint of each edge was changed. The $\Pi'$-stability of any edge with both endpoints in $X$ or both endpoints in $Y$ is the same as the $\Pi$-stability. Thus, if $\delta(U_\Pi)$ and $\delta(U_{\Pi'})$ are the sets of $\Pi$-unstable edges and $\Pi'$-unstable edges, respectively, then we have $\delta(U_{\Pi'}) = \delta(U_{\Pi})\setminus\{e_1,e_2\}$. 

Now, we show that $G^*_{\Pi'}$ satisfies the desired conditions. Note that if $d_{G^*_\Pi}(f) = 2$, the edges $e_1$ and $e_2$ are the only $\Pi$-unstable edges on the face $f$ in $G$. If $d_{G^*_\Pi}(f_1)=2$, then these two edges are also the only $\Pi$-unstable edges on the face $f_1$ in $G$. This means that in the embedding $\Pi'$, the faces $f$ and $f_1$ both have no $\Pi$-unstable edges on their boundary and are $\Pi'$-stable. Then, $f,f_1\not\in V(G^*_{\Pi'})$. Therefore, we have $E(G^*_{\Pi'}) = E(G^*_\Pi)\setminus\{e^*_1,e^*_2\}$ and thus $V(G^*_{\Pi'}) = V(G^*_\Pi)\setminus\{f,f_1\}$. 

Similarly, if exactly one of $f$ and $f_1$, say $f$, is of degree two in $G^*_\Pi$, then we have $E(G^*_{\Pi'}) = E(G^*_\Pi)\setminus\{e^*_1,e^*_2\}$ and thus $V(G^*_{\Pi'}) = V(G^*_\Pi)\setminus\{f\}$. 

If $d_{G^*_\Pi}(f)>2$, then the face $f$ must contain at least one other $\Pi$-unstable edge, which is also $\Pi'$-unstable. So $f$ is a $\Pi'$-unstable face and appears as a vertex of $G^*_{\Pi'}$. Similarly if $d^*_{G^*_\Pi}(f_1)>2$. In this case, we have $E(G^*_{\Pi'}) = E(G^*_\Pi)\setminus\{e^*_1,e^*_2\}$ and $V(G^*_{\Pi'}) = V(G^*_\Pi)$. 

Therefore, $G^*_{\Pi'}$ is obtained from $G^*_\Pi$ by removing $e_1^*$ and $e_2^*$ (and removing the incident vertex $f$ or $f_1$ if it becomes of degree zero), as claimed.  

Finally, we show that $g(\Pi) = g(\Pi')$, by showing that the two embeddings have the same number of faces. First, we note that if a face $F$ in $\Pi$ does not contain either of the edges $e_1$ or $e_2$, then it is also a face in $\Pi'$. 

Now, consider the faces $f'$ and $f_1'$ in $\Pi'$ that contain the edges $e_1$ and $e_2$. We can express $f'$ as $e_1,W_1,e_2,W_2$ and $f_1'$ as $e_1,W_3,e_2,W_4$, where $W_1,\dots,W_4$ are the subwalks of $f'$ and $f_1'$ between appearances of $e_1$ and $e_2$. Denote by $W_i^{-1}$ the subwalk $W_i$ traversed in the opposite direction. Then, by changing the stability of only the edges $e_1$ and $e_2$, we obtain the facial walks $e_1,W_4^{-1},e_2,W_2$ and $e_1,W_3,e_2,W_1^{-1}$. Thus, the two faces $f'$ and $f_1'$ become two faces in $\Pi$. Since the number of faces is the same in $\Pi$ as in $\Pi'$, the genus of these two embeddings is the same. 
\end{proof}

We note the correspondence between this method of removing a pair of parallel edges in the unstable dual of a cubic graph and Whitney's 2-switching theorem \cite{whitney2}. 

This gives a method of reduction for determining the genus of an embedding $\Pi$ given its corresponding unstable dual with respect to some base embedding. We can delete the zero-homologous loops and pairs of parallel edges with one endpoint of degree exactly two (and any resulting isolated vertices) without changing the genus of the embedding. 
However, if a loop is subdivided more than once, the genus increases by one with every two subdivisions.

\begin{theorem}
\label{cor:cycles}
Let $G$ be a cubic graph and $\Pi$ an embedding of $G$ whose unstable dual, $G^*_\Pi$, is the cycle graph $C_k$. If $G^*_\Pi$ is zero-homologous on the surface of the base embedding $\hatPi$, then $\Delta g(\Pi) = \left\lfloor \frac{k-1}{2} \right\rfloor$.
\end{theorem}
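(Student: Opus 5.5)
We compute the $\Pi$-faces directly by tracing them through the stable subwalks. Let the cycle $G^*_\Pi=C_k$ have vertices $f_1,\dots,f_k$ (faces of $\hatPi$) and edges $e_1^*,\dots,e_k^*$, where $e_i^*$ joins $f_i$ and $f_{i+1}$ (indices mod $k$). Each $\Pi$-unstable face $f_i$ then contains exactly two $\Pi$-unstable edges, $e_{i-1}$ and $e_i$, counted with multiplicity. Hence its boundary splits into exactly two stable subwalks, $A_i$ and $B_i$, each running between an occurrence of $e_{i-1}$ and an occurrence of $e_i$. All $\Pi$-stable faces of $\hatPi$ remain faces of $\Pi$. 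Every other $\Pi$-face is an alternating sequence of stable subwalks and unstable edges, which we trace with the signature rule of Section~\ref{section:framework}.

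Next, crossing the unstable edge $e_i$ leads from a stable subwalk of $f_i$ to a stable subwalk of $f_{i+1}$, and the traversal direction reverses, so the walk continues along the opposite side. We use the zero-homologous hypothesis via Proposition~\ref{prop:eulerian}: $C_k$ bounds a region, so the subwalks split into an ``inside'' family $A_1,\dots,A_k$ and an ``outside'' family $B_1,\dots,B_k$. We must then determine the gluing rule. I expect that crossing $e_i$ from $A_i$ lands on $B_{i+1}$, switching sides, because the local rotation is inverted on the unstable side. A $\Pi$-face would then read $A_1 e_1 B_2 e_2 A_3 e_3 \cdots$, alternating sides. After $k$ crossings it returns to $f_1$ on side $A$ if $k$ is even and on side $B$ if $k$ is odd. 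So for $k$ even the $2k$ subwalks form exactly two $\Pi$-faces, and for $k$ odd they form a single face.

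This gives $f(\Pi)=f(\hatPi)-k+2$ for even $k$ and $f(\Pi)=f(\hatPi)-k+1$ for odd $k$. Therefore $\Delta g(\Pi)=\frac12(f(\hatPi)-f(\Pi))$ equals $(k-2)/2$ or $(k-1)/2$ respectively, which is $\lfloor (k-1)/2\rfloor$ in both cases. The degenerate cases $k=1$ (a loop) and $k=2$ (a digon) give $\Delta g=0$, consistent with Propositions~\ref{prop:loops} and~\ref{prop:digons}; these serve as sanity checks, or as base cases if an induction is preferred. The upper bound also agrees with Theorem~\ref{thm:edgecutsupper}.

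The main obstacle is justifying the gluing rule: that crossing each unstable edge switches between the inside and outside families consistently around the cycle. The cycle may revisit the same face of $\hatPi$, since $C_k$ is a subgraph of $G^*$ and need not be induced there. The base embedding may also be non-planar, so ``inside/outside'' must come from the two sides $A\cup B$ of the zero-homologous cycle. I would first check the rule locally at a single unstable edge $e=uv$ with $u$ stable: the walk arriving along one face-side of $e$ departs at $v$ using $\hatpi_v^{-1}$, which places it on the other face incident with $e$ but on the side of $v$ that is mirrored. Then I would argue that this local rule, combined with orientability of the region bounded by $C_k$, yields the alternating pattern. If this proves delicate, the fallback is induction on $k$: contract a subdividing vertex of the cycle via the face-splitting trick from the proof of Theorem~\ref{thm:cut vertex dual}, and track how each extra pair of subdivisions merges two faces.
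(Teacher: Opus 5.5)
Your argument is correct, but it takes a different route from the paper's. The paper argues by induction on $k$, taking $k=1,2$ from Propositions~\ref{prop:loops} and~\ref{prop:digons}. For $k\ge 3$ it picks a vertex $f$ of the dual cycle and deletes the two unstable edges on $f$, which merges the base faces $f,f_1,f_2$. It restores cubicity by replacing degree-2 vertices with diamonds, checks that the new unstable dual is $C_{k-2}$, and then traces the faces through $f,f_1,f_2$ to show that $\Delta g$ drops by exactly one.

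You instead trace all $\Pi$-faces at once. The step you flag as the main obstacle is in fact immediate, and it needs neither Proposition~\ref{prop:eulerian} nor any orientability argument. By the signature rule, after traversing an unstable edge the walk continues on the \emph{other} $\hatPi$-face containing that edge, in reverse; this is your local check. Since $e_i$ joins a vertex of $\Unst_\Pi$ to a vertex outside it, the walk emerges at the endpoint of opposite stability. It therefore enters the subwalk of $f_{i+1}$ on the other side of the cut. So your inside/outside families are simply the subwalks through $\Pi$-unstable and through $\Pi$-stable vertices, respectively. The switch $A_i\to B_{i+1}$ comes from the cut, not from the inverted rotation; the inversion is what moves the walk from $f_i$ to $f_{i+1}$.

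Your worry about revisiting faces is also unfounded. $C_k$ has $k$ distinct vertices, so for $k\ge 3$ each $e_i$ occurs exactly once on each of $f_i$ and $f_{i+1}$. This ensures the walk runs along $B_{i+1}$ from its $e_i$-end to its $e_{i+1}$-end and moves monotonically around the cycle, which gives your two faces for even $k$ and one for odd $k$.

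Your route gives a short, self-contained count with explicit facial walks. It needs no auxiliary graph, no diamonds, and no bookkeeping of how the number of base faces changes. In fact, the paper's inductive step already splits by the parity of $k$ when it lists the facial walks through $f,f_1,f_2$, so it implicitly uses the alternating pattern you make explicit. The paper's route, for its part, presents the result as one more local reduction of the unstable dual: contracting the two dual edges at a degree-2 vertex costs exactly one unit of genus. This fits alongside its loop, digon, cut-vertex and 2-cut reductions.
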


\begin{proof}
The cases $k=1$ and $k=2$ follow from Propositions \ref{prop:loops} and \ref{prop:digons}, respectively. For $k\geq 3$, we prove the result by induction. Let $C_k$ be the unstable dual of some embedding $\Pi$ of $G$, for $k\geq 3$. Let $f$ be some vertex of the unstable dual, corresponding to the $\Pi$-unstable face $f$ of $G$. Since $f$ has degree 2 in $G^*_\Pi$, the face $f$ contains two $\Pi$-unstable edges, $e_1$ and $e_2$. 

Now, construct the graph $H$ from $G$ by deleting the edges $e_1$ and $e_2$. To obtain a cubic graph, delete any resulting vertices of degree one and replace any resulting vertex of degree two with a diamond graph, shown in Figure \ref{fig:adddiamond}. The stability of all of the vertices in the diamond should match the stability of the vertex that it replaces. In the dual, this corresponds to contracting the two dual edges $e^*_1$ and $e^*_2$. 

\begin{figure}[ht]
    \centering
         \includegraphics[width=0.65\textwidth]{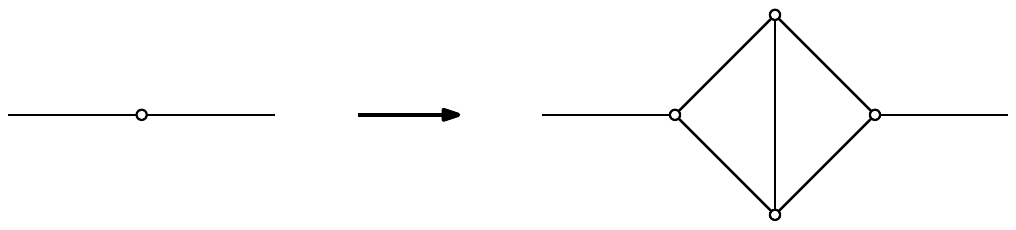}        
    \caption{In the construction of the graph $H$ in the proof of Theorem \ref{cor:cycles}, each vertex of degree 2 is replaced by a diamond. }
    \label{fig:adddiamond}
\end{figure}

Let $\Pi'$ be the embedding of $H$ where the only vertex stability changes from $\Pi$ are at the deleted vertices, since they are not vertices in $H$, and at the newly added vertices in the diamonds. However, the stability of these new vertices is such that the unstable edges incident with any degree two vertex remain unstable when the vertex is replaced. 

The corresponding base embedding $\hatPi_H$ of $H$ has $2-2c_2$ fewer faces than the base embedding $\hatPi$ of $G$, where $c_2$ is the number of vertices of degree two that were replaced by a diamond. When the $\Pi$-unstable edges on the face $f$ were deleted, the three $\hatPi$-faces $f$, $f_1$ and $f_2$ containing the edges $e_1$ and $e_2$ were merged to form a single face in $\hatPi_H$. Call this new face $f'$, and let $f_1$ be the face of $G$ containing $e_1$ and $f_2$ the face containing $e_2$. 

Next, we argue that $H^*_{\Pi'}$ is the cycle $C_{k-2}$. Each of the faces that are $\Pi$-unstable in $G$ and contain neither of the edges $e_1$ or $e_2$ are unchanged and thus remain $\Pi'$-unstable in $H$. Each of these faces contains two $\Pi$-unstable edges and thus has degree two in $G^*_\Pi$ and in $H^*_{\Pi'}$. The face $f_1$ in $G$ has two $\Pi$-unstable edges: $e_1$ and one other edge, say $e'$. If the edge $e'$ was deleted during the construction of $H$, then it must have been a $\Pi$-unstable edge in $G$ shared between the faces $f_1$ and $f_2$. In this case, $k=3$ and there are no remaining $\Pi'$-unstable faces. So $H^*_{\Pi'}$ is empty, having the same change in genus as an unstable dual with a single loop, by Proposition \ref{prop:loops}. Otherwise, $e'$ is shared with some $\Pi$-unstable face $f_3$ in $G$ that is neither $f$ or $f_2$, so it is also $\Pi'$-unstable in $H$. In $H$, the edge $e'$ is a $\Pi'$-unstable edge shared between $f'$ and $f_3$. 

Similarly, the face $f_2$ has exactly two $\Pi$-unstable edges: $e_2$ and $e''$. As above, if $e''$ was deleted in the construction of $H$, then $k=3$ and $H^*_{\Pi'}$ is empty. Otherwise, $e''$ is shared with some $\Pi$-unstable face $f_4$ in $G$ that is neither $f$ or $f_1$, so it is also $\Pi'$-unstable in $H$. Then, in $H$, the edge $e''$ is a $\Pi'$-unstable edge shared between $f'$ and $f_4$. 

Now, the face $f'$ contains only edges that were in the faces $f$, $f_1$ or $f_2$ in $G$. Since both $e_1$ and $e_2$ were deleted to form $H$, only $e'$ and $e''$ are $\Pi'$-unstable edges in $f'$. Thus, the vertex in $H^*_{\Pi'}$ corresponding to $f'$ has degree exactly two and its neighbors are the neighbors of $f_1$ and $f_2$ in $G^*_\Pi$ that are not $f^*$. Every other vertex in $H^*_{\Pi}$ has the same neighborhood as in $G^*_\Pi$. Thus, $H^*_{\Pi'} = C_{k-2}$. By the induction hypothesis, $\Delta g(\Pi') = \left\lfloor \frac{k-3}{2}\right\rfloor$. 

Now, we show that $\Delta g(\Pi) = \Delta g(\Pi')+1$ by tracing the faces in both $\Pi$ and $\Pi'$. If $k$ is even, then in $\Pi$, we have two facial walks containing edges in $f$, $f_1$ and $f_2$: 
$$e_1,W,e_2,W'_2,e'', \dots, e',W'_1 \text{~~and~~}
e_1,W',e_2,W_2,e'',\dots, e',W_1$$
where $W$ and $W'$ are stable subwalks in $f$ and $W_i$ and $W'_i$ are stable subwalks in $f_i$, for $i=1,2$. Then, in $\Pi'$, we also have two facial walks containing these edges:
$$W,W_2,e'',\dots, e',W_1 \text{~~and~~}
W',W'_2,e'', \dots, e',W'_1$$

Similarly, if $k$ is odd, then in $\Pi$ we have one facial walk containing edges in $f$, $f_1$ and $f_2$:
$$e_1,W,e_2,W'_2,e'',\dots, e',W_1,e_1,W',e_2,W_2,e'',\dots, e',W'_1$$
where $W, W', W_i$ and $W'_i$, for $i=1,2$, are as above. Then, in $\Pi'$, we also have one facial walk containing these edges:
$$W,W_2,e'', \dots, e',W'_1,W',W'_2,e'', \dots, e',W_1$$

Recall that $c_2$ is the number of vertices of degree 2 replaced with a diamond graph and note that $f(\Pi) = f(\Pi')-2c_2$. Then we have
$$\Delta g(\Pi) = \frac{1}{2}\left(f(\hatPi) - f(\Pi)\right) = \frac{1}{2}\left(f(\hatPi_H)+(2-2c_2)-(f(\Pi')-2c_2) \right) $$ $$= \frac{1}{2}\left(f(\hatPi_H) - f(\Pi') \right)+1 = \Delta g(\Pi')+1.$$
Therefore, $\Delta g(\Pi) = \left\lfloor \frac{k-3}{2} \right\rfloor + 1 = \left\lfloor \frac{k-1}{2}\right\rfloor$. \end{proof}

\section{Small genus embeddings of planar graphs}
\label{section:g2}

\subsection{Cut-edges}

Using the definition of the unstable dual graph, the terms $g_k(G)$ of the genus distribution can be computed by counting the number of zero-homologous subgraphs of the dual graph $G^*$ that correspond to an embedding with genus $k$ (i.e., embeddings with $\Delta g(\Pi) = k - g(\hatPi)$). Recall that the dual of any cut-edge in $G$ is always a zero-homologous loop in the surface of any embedding of $G$, and vice versa. 

For a cubic graph $G$, denote by $g'_k(G)$ the number of zero-homologous subgraphs $G^*_\Pi$ of the dual graph $G^*$ that contain no zero-homologous loops and such that $g(\Pi)=k$. It follows from Proposition \ref{prop:loops} that the term $g_k(G)$ of the genus distribution can be computed directly from $g'_k(G)$. 

\begin{proposition}
    Let $G$ be a cubic graph with base embedding $\hatPi$ and let $\ell$ be the number of cut-edges of $G$. Then, $g_k(G) = 2^\ell g'_k(G)$. 
\end{proposition}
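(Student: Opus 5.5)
We prove the statement by exhibiting a $2^\ell$-to-one, genus-preserving correspondence between all embeddings of $G$ and the embeddings whose unstable dual contains no zero-homologous loop. Let $L=\{e_1,\dots,e_\ell\}$ be the cut-edges of $G$. By the remark after Proposition~\ref{prop:loops}, the dual edges $e_i^*$ are exactly the zero-homologous loops of $G^*$. Every embedding $\Pi$ is determined by its unstable set $U_\Pi$. Hence, for the count of $g_k(G)$, it suffices to work with the set $\mathcal{S}$ of all subsets $U\subseteq V(G)$, and each $U$ gives a unique embedding $\Pi_U$.

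For each $i$, let $Y_i$ be the vertex set of one fixed side of $G-e_i$. Define the involution $\tau_i$ on $\mathcal{S}$ by $U\mapsto U\,\triangle\, Y_i$, which flips the stability of every vertex in $Y_i$. This is exactly the operation used in the proof of Proposition~\ref{prop:loops}, and we record three of its effects.
\begin{enumerate}
\item It toggles whether $e_i$ is unstable.
\item It leaves the stability of every other edge unchanged, since every other edge has both endpoints on the same side of $e_i$.
\item It preserves the genus. When $e_i$ is unstable, Proposition~\ref{prop:loops} gives this directly. When $e_i$ is stable, the same face-tracing argument with $eW_1eW_2\leftrightarrow eW_1^{-1}eW_2$ applies, and alternatively we can apply Proposition~\ref{prop:loops} to the image.
\end{enumerate}

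The maps $\tau_i$ commute, since symmetric differences commute. By the first two effects, the composite $\tau_S=\prod_{i\in S}\tau_i$ for $S\subseteq\{1,\dots,\ell\}$ toggles the instability of exactly the cut-edges indexed by $S$. So the group $(\mathbb{Z}_2)^\ell$ acts on $\mathcal{S}$. The action is free: if $S\neq\emptyset$, then $\tau_S$ changes the set of unstable edges, so it changes $U$. Each orbit contains exactly one $U$ in which no cut-edge is unstable. We obtain it by applying $\tau_S$ with $S$ the set of unstable cut-edges of $U$.

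Since the action preserves genus, the embeddings of genus $k$ split into orbits of size $2^\ell$. Each orbit has exactly one representative whose unstable dual contains no zero-homologous loop. The number of such representatives is $g'_k(G)$, where $g'_k(G)$ counts these embeddings (equivalently, these unstable duals, counted once per embedding as in the definition). This gives $g_k(G)=2^\ell g'_k(G)$.

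The main obstacle is interpretive: $g'_k$ is phrased as counting subgraphs $G^*_\Pi$, while a subgraph determines $U$ only up to complementation (compare the choice $|U|\le n/2$ in Proposition~\ref{prop:eulerian}). We therefore need the counting convention of $g'_k$ to match the one implicit in $g_k$, namely one count per embedding. If $g'_k$ instead counts subgraphs, then both sides carry the same factor of $2$ from $U\leftrightarrow V\setminus U$, which preserves the genus because it reverses all faces. In either case the factor $2^\ell$ comes solely from the free $(\mathbb{Z}_2)^\ell$ action above.
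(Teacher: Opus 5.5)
Your argument is correct and is essentially the paper's. The paper only notes that the identity follows from Proposition~\ref{prop:loops}, and your free $(\mathbb{Z}_2)^\ell$-action, flipping a fixed side of each cut-edge, makes that precise, provided $g'_k$ counts one unstable dual per embedding $\Pi$.

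One correction to your last paragraph: that reading is forced, not optional. If $g'_k$ counted distinct subgraphs, only $g'_k$ would be halved, because $g_k$ counts rotation systems. You would then get $g_k=2^{\ell+1}g'_k$. For example, for $K_4$ we have $g_0=2$, but the only genus-$0$ unstable dual is the empty graph.
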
 

It follows that cut-edges in a cubic graph $G$ have no effect on the log-concavity of its genus distribution. This is an alternate view of the result of \cite{GrossFurst} that the genus distribution of a graph formed by joining two graphs $G$ and $H$ by a cut-edge $uv$, where $u\in V(G)$ and $v\in V(H)$, is a constant multiple of the convolution of the genus distributions of $G$ and $H$, where the constant is $d_G(u)d_H(v)$. 

\subsection{2-cuts, Whitney switching}

Next, we discuss effects of 2-cuts on the genus distribution, corresponding to a zero-homologous digon in the dual graph. 

Suppose that $\{e,f\}$ is a 2-cut in $G$. Then we write $e\sim f$. This defines a symmetric and transitive binary relation $\sim$ (the easy proof is left to the reader) and therefore all edges participating in 2-cuts in $G$ can be partitioned into equivalence classes $E_1,\dots,E_t$. In each $E_i$, any two edges form a 2-cut.

For each $i$, fix an edge $e_i\in E_i$.
Denote by $g''_k(G)$ the number of zero-homologous subgraphs $G^*_\Pi$ of the dual graph $G^*$ that contain no edges in $E_i\setminus\{e_i\}$ for $1\le i \le t$ and such that $g(\Pi)=k$. It follows from Proposition \ref{prop:digons} that the term $g_k(G)$ of the genus distribution can be computed directly from $g''_k(G)$. 

\begin{proposition}
    Let $G$ be a $2$-connected cubic graph with base embedding $\hatPi$ and let $E_1,\dots,E_t$ $(t\ge0)$ be the classes corresponding to the 2-cuts in $G$. Let $b_i=|E_i|$ for $1\le i\le t$. Fix an edge $e_i\in E_i$ for each $i$. Let 
    $\ell=b_1+\dots+b_t$. Then, $g_k(G) = 2^{\ell-t} g''_k(G)$. 
\end{proposition}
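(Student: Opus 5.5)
We build a genus-preserving bijection, modelled on the cut-edge proposition but using Proposition~\ref{prop:digons} in place of Proposition~\ref{prop:loops}. Let $\mathcal{E}$ be the set of all embeddings of $G$, identified via Proposition~\ref{prop:eulerian} with the cuts $\delta(\Unst_\Pi)$ up to complementation of $\Unst_\Pi$. The goal is to show that $\mathcal{E}$ splits into classes of size exactly $2^{\ell-t}$. All embeddings in a class should have the same genus, and each class should contain exactly one embedding whose unstable dual avoids $\bigcup_i (E_i\setminus\{e_i\})^*$.

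\textbf{Structure of a class $E_i$.} Since $G$ is $2$-connected and cubic, and the edges of $E_i$ pairwise form $2$-cuts, I would show that $G-E_i$ has exactly $b_i$ components $X_1,\dots,X_{b_i}$. These components are arranged cyclically, each joined to the next by one edge of $E_i$. This is the usual cyclic structure of a $2$-edge-cut class.

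For each $j$, the set $\delta(V(X_j))$ consists of two edges of $E_i$, so it is a $2$-cut. Define the operation $\sigma_{i,j}$ on embeddings: reverse the local rotation at every vertex of $X_j$, exactly as in the proof of Proposition~\ref{prop:digons}. The proof of that proposition, which only uses that the two edges form a $2$-cut, shows three things. First, $\sigma_{i,j}$ preserves genus. Second, it toggles the stability of exactly the two edges of $\delta(V(X_j))$. Third, it leaves every other edge's stability unchanged.

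That proof assumed both edges were unstable. The same face-tracing computation ($e_1W_1e_2W_2$ and $e_1W_3e_2W_4$ rearranged) works in the reverse direction, and also when exactly one of the edges is unstable. I would check this mixed case explicitly; it is the main obstacle. If it fails, I would instead compose flips so that the toggled pair is always taken from the unstable edges of $E_i$.

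\textbf{Group action.} The flips $\sigma_{i,j}$ commute, since they act on disjoint vertex sets. They generate a group acting on $\mathcal{E}$. On the stability vector restricted to $E_i$, viewed in $\mathbb{Z}_2^{E_i}$, the flip $\sigma_{i,j}$ adds the vector of a pair of cyclically consecutive edges. These vectors span exactly the even-weight subspace, of dimension $b_i-1$. Note that the parity of the number of unstable edges in $E_i$ is forced: the cut $\delta(\Unst_\Pi)$ meets every cycle evenly, and $E_i$ together with paths inside the $X_j$ forms a cycle.

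Hence the orbit of any $\Pi$ realizes every stability pattern on $E_i$ with the correct parity, independently for each $i$, while all other edges keep their stability. Each orbit therefore has size $\prod_i 2^{b_i-1}=2^{\ell-t}$, since distinct stability patterns give distinct embeddings up to the global flip. Within each orbit there is exactly one embedding whose unstable edges in each $E_i$ lie within $\{e_i\}$: given the parity, $e_i$ is unstable if and only if the parity is odd. That embedding is the one counted by $g''_k(G)$.

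Because genus is constant on orbits, summing over the embeddings of genus $k$ gives $g_k(G)=2^{\ell-t}g''_k(G)$.
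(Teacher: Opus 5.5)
Your strategy is the natural way to make precise the paper's one-line justification that the statement ``follows from Proposition~\ref{prop:digons}''. You flip all vertices on one side of a $2$-edge-cut and use the cyclic structure of each class $E_i$. You then note that the flips act on cuts by translation by $\bigoplus_i\{\text{even subsets of }E_i\}$, a group of order $2^{\ell-t}$, and that each coset contains exactly one cut $D$ with $D\cap E_i\subseteq\{e_i\}$ for all $i$. The counting is right. (Flips from different classes need not act on disjoint vertex sets, but they commute anyway, since each is $\Unst_\Pi\mapsto\Unst_\Pi\triangle V(X_{i,j})$.)

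\textbf{The gap.} The missing step is the one you flagged: that $\sigma_{i,j}$ preserves genus for \emph{every} $\Pi$. Proposition~\ref{prop:digons} only treats a pair of edges that are both unstable. Even then, its face tracing covers only the configuration in which $e_1,e_2$ lie on two distinct faces of $\Pi'$, each once. Moreover, for nonplanar $\hatPi$ the dual edges of a $2$-cut need not even form a digon of $G^*$. Your fallback does not repair this. Take $b_i=2$, say $E_i=\{f,e_i\}$, and $\Unst_\Pi=\{v\}$ for an end $v$ of $f$; $v$ is not an end of $e_i$, since otherwise the third edge at $v$ would be a bridge. Then $f$ is the only unstable edge of $E_i$, so there is no unstable pair to remove. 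Any sequence of flips reaching the normalized representative must toggle $\{f,e_i\}$ at a moment when exactly one of them is unstable, which is precisely the mixed case.

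\textbf{How to close it.} The missing lemma is easy to prove directly, without Proposition~\ref{prop:digons}. Let $\{e_1,e_2\}$ be a $2$-edge-cut with sides $X$ and $Y$, and let $\Pi$ be any embedding. Cut the facial walks through $e_1,e_2$ into \emph{strands}, meaning maximal subwalks inside $X$ or inside $Y$.
\begin{itemize}
\item The two $X$-strands match the two entries into $X$ (along $e_1$, along $e_2$) with the two exits, giving $\alpha\in S_2$. Likewise $Y$ gives $\beta\in S_2$.
\item The facial walks through $e_1,e_2$ correspond to the cycles of $\beta\alpha$ on $\{1,2\}$. So there are two such walks if $\alpha=\beta$ and one otherwise.
\item Reversing the rotation at every vertex of $Y$ replaces the face-tracing rule inside $Y$ by that of the mirror image. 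Hence every $Y$-strand is traversed backwards: a strand entering along $e_a$ and leaving along $e_b$ becomes one entering along $e_b$ and leaving along $e_a$.
\item Thus $\beta$ becomes $\beta^{-1}=\beta$ and $\alpha$ is unchanged. Faces inside $X$ are unchanged and faces inside $Y$ are reversed.
\end{itemize}
So the number of faces is preserved, whichever of $e_1,e_2$ are unstable and whatever $\hatPi$ is. With this lemma in place of the appeal to Proposition~\ref{prop:digons}, your argument is complete.

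\textbf{A bookkeeping point.} The map $\Pi\mapsto G^*_\Pi$ is $2$-to-$1$, because $\Pi$ and its mirror image have complementary unstable sets. So your count, done on cuts, proves
\[
\#\{\text{cuts of genus }k\}=2^{\ell-t}\,\#\{\text{normalized cuts of genus }k\}.
\]
Your last sentence then silently passes to $g_k(G)$, which counts rotation systems. This yields $g_k(G)=2^{\ell-t}g''_k(G)$ only if $g''_k(G)$ counts embeddings $\Pi$, so that each normalized subgraph is counted twice. If $g''_k(G)$ counts subgraphs, the identity reads $g_k(G)=2^{\ell-t+1}g''_k(G)$. Already for $K_4$ (where $t=\ell=0$) one has $g_0=2$ but a single genus-$0$ unstable dual. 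The paper's definition has the same ambiguity, so state your convention explicitly.
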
 

Therefore, we may focus only on the unstable dual graphs of 3-connected graphs.

\subsection{3-connected planar graphs}

Using subgraphs of the dual graph, Enami \cite{enami} characterized the toroidal embeddings of a cubic 3-connected planar graph. 

\begin{theorem}
    [\cite{enami}] There exists a one-to-one correspondence between inequivalent embeddings of a cubic 3-connected planar graph $G$ on the torus and subgraphs of the dual graph $G^*$ isomorphic to $K_{2,2,2}$, $K_{2,2m}$ or $K_{1,1,2m-1}$ for some positive integer $m$. 
    \label{thm:enami1}
\end{theorem}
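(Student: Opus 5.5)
The plan is to prove Enami's theorem within the unstable-dual framework, taking the planar embedding as $\hatPi$. By Proposition~\ref{prop:eulerian}, every embedding $\Pi \neq \hatPi$ gives a nonempty zero-homologous subgraph $G^*_\Pi$ of the planar dual $G^*$. In the plane, zero-homologous just means Eulerian, i.e.\ an edge-cut $\delta(U)$ of $G$. There is one subtlety: $\Unst_\Pi$ and its complement give the same cut, but correspond to the two embeddings $\Pi$ and its mirror. So "inequivalent" has to be handled by matching an Eulerian subgraph to an unordered pair $\{U, V(G)\setminus U\}$. I will read the theorem as counting embeddings up to this reflection, which is exactly the choice $|U|\le n/2$ made in the proof of Proposition~\ref{prop:eulerian}. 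The task then reduces to the following: \emph{a nonempty Eulerian subgraph $H^*\subseteq G^*$ without isolated vertices gives $\Delta g=1$ if and only if $H^*\cong K_{2,2,2}$, $K_{2,2m}$ or $K_{1,1,2m-1}$.}

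\textbf{Step 1 (sufficiency).} I compute $\Delta g$ for the three families using the reduction tools. For $K_{2,2m}$, the two degree-$2m$ vertices $u,v$ split it into $m$ digons-through-pairs, i.e.\ $m$ copies of $C_4$ glued at $\{u,v\}$. Each $C_4$ has $\Delta g = \lfloor 3/2\rfloor = 1$ by Theorem~\ref{cor:cycles}. Iterating Theorem~\ref{thm:2-cuts} then gives $\Delta g = m - (m-1)\cdot s$, where $s$ is the product of the $s$-terms. I must check by tracing faces that in a $C_4$-dual the stable subwalks at the two opposite vertices always share a $\Pi$-face; this gives $\Delta g=1$. $K_{1,1,2m-1}$ is handled the same way from triangles glued along an edge $uv$. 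Here I first apply Theorem~\ref{thm:2-cuts} to split off triangles, together with the fact that $C_3$ gives $\Delta g=1$ by Corollary~\ref{cor:g1cut}. Finally, $K_{2,2,2}$ (the octahedron) splits at a 2-vertex-cut into a $C_4$ and a $K_{1,1,2}$, or can be traced directly.

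\textbf{Step 2 (necessity).} Suppose $\Delta g(\Pi)=1$. If $H^*$ is disconnected, Proposition~\ref{prop:disconnected dual} gives $\Delta g \ge 2$. If $H^*$ has a cut vertex, the blocks can be grouped consecutively in the planar rotation, since in the plane the blocks at a cut vertex are non-interleaved. Theorem~\ref{thm:cut vertex dual} then gives $\Delta g\ge 2$. So $H^*$ is 2-connected. If $H^*$ is a cycle, Theorem~\ref{cor:cycles} forces $k\in\{3,4\}$, which is $K_{1,1,1}$ or $K_{2,2}$. Otherwise $H^*$ is a 2-connected Eulerian plane graph that is not a cycle, so it has a 2-vertex-cut or is 3-connected. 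For a 2-vertex-cut $\{u,v\}$, I write $H^*=\phi(G_1,G_2)$ with $G_1,G_2$ Eulerian; here parity issues at $u,v$ must be handled by choosing the split so both sides are Eulerian. Theorem~\ref{thm:2-cuts} gives $1\ge \Delta g(\Pi_1)+\Delta g(\Pi_2)-1$, so both pieces have $\Delta g=1$ and $s=1$ on both sides. Inducting on $|E(H^*)|$, each piece is in the list, and gluing list members with $s$-values $1$ at the prescribed vertex pair should only produce $K_{2,2m}$ and $K_{1,1,2m-1}$. Gluing two $K_{1,1,1}$ along an edge's endpoints, for example, gives $K_{1,1,3}$ after accounting for the shared edge.

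\textbf{Main obstacle.} The hardest case is the 3-connected Eulerian one. There I must show the only possibility is the octahedron $K_{2,2,2}$, and every larger 3-connected Eulerian planar dual subgraph has $\Delta g\ge 2$. My first attempt would use Lemma~\ref{lem:numCycles} and Proposition~\ref{prop:disjointCycles} on the torus. The $\Pi$-nonfacial peripheral cycles of $G$ are exactly the faces in $V(H^*)$. I would look for a peripheral family of two such cycles or three pairwise disjoint nonhomotopic ones, which contradicts genus $1$. Failing that, I would use a counting argument: $\Delta g = \tfrac12(|V(H^*)| - \#\text{merged }\Pi\text{-faces})$. Each $\Pi$-face through unstable edges visits at least three stable subwalks, which bounds the number of $\Pi$-faces and forces $|V(H^*)|\le 6$ with all degrees $4$.
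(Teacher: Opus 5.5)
The paper never writes out the ``simplified proof'' it alludes to. It only cites Enami and says the tools of Section~\ref{section:cuts and duals} suffice, so I am measuring your outline against that intended route. It has a genuine gap at exactly the step you flag.

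\textbf{The 3-connected case.} The counting you propose is vacuous. Write $p$ for the number of $\Pi$-faces through unstable edges, so $\Delta g(\Pi)=\frac12(|V(H^*)|-p)$. ``At least three stable subwalks per face'' only gives $p\le\frac23|E(H^*)|$. A 3-connected Eulerian $H^*$ has minimum degree at least $4$, so $|E(H^*)|\ge 2|V(H^*)|$, and the resulting lower bound on $\Delta g$ is negative; nothing is bounded. Three ingredients are missing.
(i) Crossing an unstable edge sends you into the adjacent $\hatPi$-face traversed backwards. Hence these $\Pi$-faces are exactly the left-right (Petrie) walks of the plane graph $H^*$, and the stable subwalks are the corners of $H^*$.
(ii) Each such walk uses an even number of unstable edges, since the sign $t$ must return to its initial value. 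In a 3-connected $H^*$ none uses exactly four: such a walk runs around a 4-cycle $F_1F_2F_3F_4$ with corners alternately inside and outside. Then $F_2,F_4$ have all their other edges on one side and $F_1,F_3$ on the other, so $\{F_2,F_4\}$ is a 2-vertex-cut.
(iii) Hence $p\le|E(H^*)|/3\le|V(H^*)|-2$ by Euler's formula, and $\Delta g=1$ forces an Eulerian triangulation all of whose left-right walks are hexagons.
A local check then finishes the argument. The hexagon through a corner at a vertex $v$ of degree $d$ forces the apexes beyond alternate edges of the link of $v$ to coincide. For $d\ge 6$, planarity then makes $H^*$ the bipyramid over $C_d$, whose left-right walks are longer than $6$. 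So all degrees are $4$ and $H^*=K_{2,2,2}$. Your peripheral-family idea is not developed, and it does not obviously apply when $G$ is only 3-connected.

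\textbf{The 2-vertex-cut case.} An Eulerian $\phi$-split need not exist. Take two copies of $K_{2,2,2}$ minus an edge $st$ and identify them at $s$ and $t$. This is a simple 2-connected Eulerian plane graph, so a legitimate $H^*$. It is neither a cycle nor 3-connected. Its only 2-vertex-cut is $\{s,t\}$, and both bridges have degree $3$ at $s$ and at $t$. So neither Theorem~\ref{thm:2-cuts} nor any 3-connected argument reaches it. Even when a split exists, the induction needs the $s$-values for every cofacial pair in every listed graph, which you only assert. For example, $s=0$ for non-pole pairs of $K_{2,2m}$ with $m\ge 2$, and $s=0$ for adjacent vertices of $K_{2,2,2}$.

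\textbf{Step~1 errors.}
\begin{enumerate}
\item $K_{2,2,2}$ is 4-connected, so it has no 2-vertex-cut. Trace it directly instead: four hexagonal walks give $\Delta g=(6-4)/2=1$.
\item Two triangles $\phi$-glued at $u,v$ give a doubled edge $uv$ on four vertices, not $K_{1,1,3}$. The correct decomposition is $K_{1,1,2m-1}=\phi(K_3,K_{2,2m-2})$ at the two vertices of degree $2m$, with $s=1$ on both sides.
\item Iterating Theorem~\ref{thm:2-cuts} for $K_{2,2m}$ needs $s=1$ at the poles of the accumulated $K_{2,2k}$, not only of $C_4$. This does hold: the walk through one pole's corner in a face $u\,w_i\,v\,w_{i+1}$ also passes through the other pole's corner in that face.
\end{enumerate}
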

The proof of Enami is based on Mohar-Robertson theory of embeddings of planar graphs in nonplanar surfaces \cite{nonplanar,projective_plane}.
Using the unstable dual and results of Section \ref{section:cuts and duals}, we can obtain a simplified proof of Theorem \ref{thm:enami1}.  

We note that there does not appear to be a straightforward generalization of Enami's characterization to the unstable dual of an embedding $\Pi$ of a cubic graph with $\Delta g(\Pi) = 1$ whose base embedding is polyhedral. A nonplanar cubic graph may have many polyhedral embeddings in the same surface, as well as in surfaces of different genera \cite{polyhedral}. In the remainder of this section, we focus on cubic planar graphs. 

For a cubic planar graph that is also C5EC, only some of the subgraphs listed in Theorem \ref{thm:enami1} can appear in the dual graph $G^*$. Note that the dual of a cubic C5EC planar graph is a 5-connected planar triangulation. Using this information, for embeddings with genus 2, we can generalize Theorem \ref{thm:enami1} under stronger connectivity conditions.

First, we give a simple proof showing that a 5-connected planar graph can contain no copy of $K_{2,3}$. 

\begin{figure}[ht]
    \centering
         \includegraphics[width=0.25\textwidth]{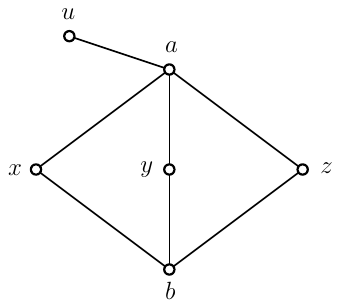}        
    \caption{A separating 4-cycle created by a copy of $K_{2,3}$.}
    \label{fig:k23}
\end{figure}

\begin{proposition}
    Let $G$ be a 5-connected planar graph. Then, $G$ contains no copy of $K_{2,3}$. 
    \label{prop:5connectedK23}
\end{proposition}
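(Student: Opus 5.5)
We argue by contradiction, following the picture in Figure~\ref{fig:k23}. Suppose $G$ is a 5-connected planar graph containing a subgraph $K$ isomorphic to $K_{2,3}$. Let $a,b$ be the two vertices of degree 3 in $K$ and $x_1,x_2,x_3$ the three vertices of degree 2. Fix a planar embedding of $G$. This restricts to a planar embedding of $K$ in which the three paths $a x_i b$ ($i=1,2,3$) are internally disjoint $(a,b)$-paths. They split the plane into exactly three regions, each bounded by a 4-cycle $C_{ij}=a x_i b x_j$ for $\{i,j\}\subset\{1,2,3\}$.

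First, one of the vertices $x_i$, say $x_3$ after relabeling, lies strictly inside the region bounded by some $C_{ij}$, here $C_{12}$. Concretely, the three paths leave $a$ in some cyclic order, and the middle path in that order lies on the bounded side of the 4-cycle formed by the other two. So $C=C_{12}=a x_1 b x_2$ is a 4-cycle of $G$ with $x_3$ in its interior.

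Second, $G$ has a vertex strictly outside $C$. This is the step that needs care, since the outer region might contain no vertex at all. We use the degrees: a 5-connected graph has minimum degree at least 5. By Jordan curve separation, $x_3$ has neighbors only in the closed interior of $C$. If the exterior contained no vertex, every vertex would lie in the closed interior. Then we re-choose which face is unbounded, putting a point of one of the other two regions at infinity. Equivalently, we apply the argument on the sphere to the two sides of $C$: one side contains $x_3$, and we must show the other side also contains a vertex. Suppose the complementary side of $C$ contains no vertex. Then that side contains only edges among $a,x_1,b,x_2$, which are chords $ab$ or $x_1x_2$; at most one of these fits in the region, by planarity.

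Now consider the degree of $x_1$. It needs at least 5 neighbors. Its neighbors not on the side of $C$ containing $x_3$ are among $a,b$ and possibly $x_2$ via a chord. Its other neighbors lie inside the $x_3$-side, which is itself divided by the paths through $x_3$ into the regions bounded by $C_{13}$ and $C_{23}$. We then run the whole argument symmetrically for the three 4-cycles $C_{12},C_{13},C_{23}$. If some region of the plane minus $K$ contains a vertex of $G$, say the region bounded by $C_{12}$ away from $x_3$, then $C_{12}$ separates that vertex from $x_3$. If no region contains a vertex, then $V(G)=\{a,b,x_1,x_2,x_3\}$, so $|V(G)|=5<6$, contradicting that a 5-connected graph has at least 6 vertices.

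Finally, if some region, say the one bounded by $C_{12}$ not containing $x_3$, contains a vertex $y$, then every $(y,x_3)$-path must cross $C_{12}$ at a vertex, by the Jordan curve theorem. Hence $\{a,x_1,b,x_2\}$ is a vertex-cut of size 4, contradicting 5-connectivity. The main obstacle is the bookkeeping in the second step: showing cleanly that some 4-cycle of $K$ has vertices of $G$ on both sides. The observation that the three regions together with $K$ cover the sphere, combined with $|V(G)|\ge 6$, resolves it.
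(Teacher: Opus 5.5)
Your proof is correct, but it produces the separating 4-cycle in a different way from the paper.

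\textbf{The paper's route.} It uses 5-connectivity through the degree of $a$. Since $d(a)\ge 5$, $a$ has a neighbor $u\notin\{b,x,y,z\}$. After relabeling so that the rotation at $a$ reads $au,ax,ay,az$, the edges $au$ and $ay$ leave $a$ on opposite sides of the cycle $axbz$. Hence $\{a,x,b,z\}$ separates $u$ from $y$.

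\textbf{Your route.} You use the face structure of the embedded theta graph $K$. The set $S^2\setminus K$ has exactly three faces, and the face bounded by $C_{ij}$ is the side of $C_{ij}$ not containing $x_k$. So any vertex of $G$ outside $K$ gives, by the Jordan curve theorem, the 4-vertex cut $\{a,x_i,b,x_j\}$. If there is no such vertex, then $|V(G)|=5$, which contradicts the requirement $|V(G)|\ge 6$ in the definition of 5-connectivity.

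\textbf{What each buys.} Your argument needs only some vertex of $G$ outside $K$, not specifically a neighbor of $a$. It also avoids the local-rotation relabeling and the paper's preliminary reduction to a maximal planar graph with $n\ge 12$. The paper's argument is shorter because the extra neighbor $u$ immediately identifies which 4-cycle separates.

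\textbf{The write-up.} Your ``Second'' paragraph never reaches a conclusion and is not used afterwards; this covers the discussion of the chords $ab$ and $x_1x_2$ and of the degree of $x_1$. The initial choice of $x_3$ inside $C_{12}$ is also unnecessary. The whole proof is the final dichotomy: either some face of $K$ contains a vertex of $G$, giving a 4-vertex cut, or $V(G)=V(K)$. You should cut the rest.
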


\begin{proof}
    Suppose that $G$ is 5-connected and contains a copy of $K_{2,3}$.  
    Note that we may assume $G$ is maximal planar. Since the smallest 5-connected maximal planar graph is the icosahedron, we may also assume $n\geq 12$. 

    Let $a$ and $b$ be distinct vertices in $G$ with three distinct common neighbors, $x$, $y$ and $z$, forming a copy of $K_{2,3}$. Since $G$ is 5-connected, $a$ must have a neighbor, say $u$, that is not in the set $\{b,x,y,z\}$. Assume, without loss of generality, that the edges incident with $a$ in a planar embedding of $G$ appear in the counterclockwise order $au, ax, ay, az$. Then, the cycle $axbz$ separates the vertex $u$ and the vertex $y$. See Figure \ref{fig:k23}. This separating 4-cycle contradicts $G$ being 5-connected and thus there can be no copy of $K_{2,3}$ in $G$. 
\end{proof}

Theorem \ref{thm:enami1} and Proposition \ref{prop:5connectedK23} implies that the only unstable duals corresponding to a toroidal embedding of a cubic C5EC planar graph are $K_{2,2}$ and $K_{1,1,1}$ or, equivalently, $C_4$ and $C_3$. No such subgraph can be separating in the dual graph. 

Now, using this characterization of embeddings with genus 1 and the results of Section \ref{section:cuts and duals}, we can construct unstable duals corresponding to embeddings of genus 2 of a cubic planar graph. Those that are connected and contain no copy of $K_{2,3}$ are shown in Figure~\ref{fig:g2duals}. In addition to these connected graphs, we have disconnected unstable duals corresponding to embeddings of genus 2, with exactly two components each of which is isomorphic to $K_{2,2,2}$, $K_{2,2m}$ or $K_{1,1,2m-1}$ for some positive integer $m$, by Theorem~\ref{thm:enami1}. 

 \begin{figure}[ht]
    \centering
    \hfill
    \begin{subfigure}[b]{0.2\textwidth}
         \centering
         \includegraphics[width=\textwidth]{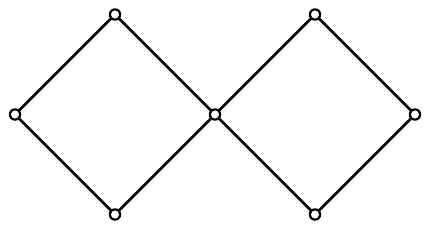}
         \caption{}
         \label{fig:g2-1}
     \end{subfigure}
\hfill
     \begin{subfigure}[b]{0.2\textwidth}
         \centering
         \includegraphics[width=0.88\textwidth]{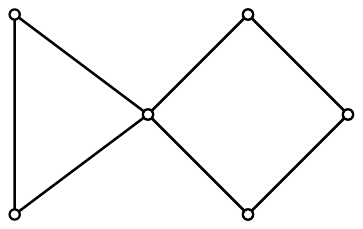}
         \caption{}
         \label{fig:g2-2}
     \end{subfigure}
\hfill
     \begin{subfigure}[b]{0.2\textwidth}
         \centering
         \includegraphics[width=0.73\textwidth]{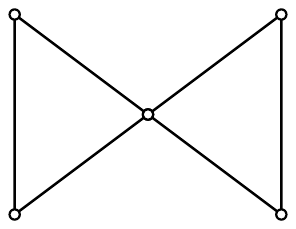}
         \caption{}
         \label{fig:g2-3}
     \end{subfigure}
     \hfill
     \begin{subfigure}[b]{0.2\textwidth}
         \centering
         \includegraphics[width=\textwidth]{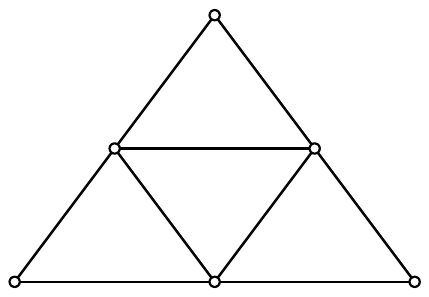}
         \caption{}
         \label{fig:g2-4}
     \end{subfigure}
     \hfill
     \hfill
     
     \hfill
    \begin{subfigure}[b]{0.15\textwidth}
         \centering
         \includegraphics[width=\textwidth]{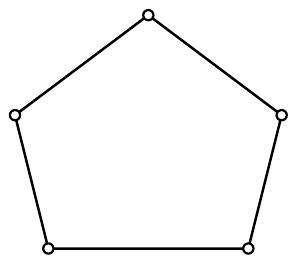}
         \caption{}
         \label{fig:g2-5}
     \end{subfigure}
     \hfill
     \begin{subfigure}[b]{0.15\textwidth}
         \centering
         \includegraphics[width=\textwidth]{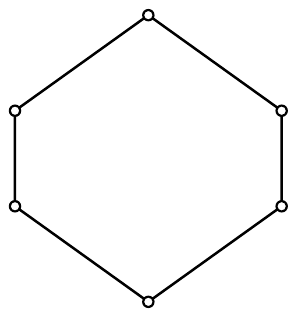}
         \caption{}
         \label{fig:g2-6}
     \end{subfigure}
\hfill
\begin{subfigure}[b]{0.15\textwidth}
         \centering
         \includegraphics[width=\textwidth]{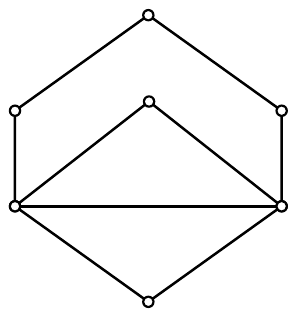}
         \caption{}
         \label{fig:g2-7}
     \end{subfigure}
     \hfill
     \vspace{0.5cm}
     \begin{subfigure}[b]{0.15\textwidth}
         \centering
         \includegraphics[width=\textwidth]{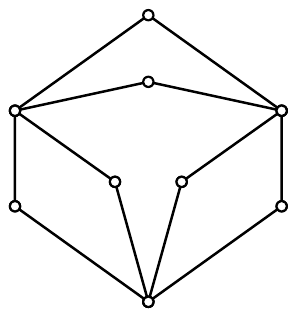}
         \caption{}
         \label{fig:g2-8}
     \end{subfigure}
\hfill\begin{subfigure}[b]{0.15\textwidth}
         \centering
         \includegraphics[width=\textwidth]{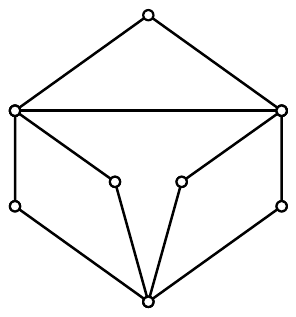}
         \caption{}
         \label{fig:g2-9}
     \end{subfigure}
     \hfill
     \hfill

    \caption{Connected unstable duals corresponding to an embedding of genus 2 in a cubic C5EC planar graph}
    \label{fig:g2duals}
\end{figure}

For cubic C5EC planar graphs, we are able to show that the unstable dual of any genus 2 embedding must be disconnected or be one of the nine graphs in Figure~\ref{fig:g2duals}. 

\begin{theorem}
\label{thm:g2duals}
Let $G$ be a cubic C5EC planar graph and $\Pi$ an embedding of $G$ with $g(\Pi)=2$. Then, $G^*_\Pi$ must either be the disjoint union of two 3-cycles, the disjoint union of a 3-cycle and a 4-cycle, the disjoint union of two 4-cycles, or one of the nine graphs shown in Figure~\ref{fig:g2duals}. 
\end{theorem}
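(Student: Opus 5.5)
The plan is to work throughout with the planar base embedding $\hatPi$, so that $\Delta g(\Pi)=g(\Pi)=2$ and $G^*$ is a 5-connected planar triangulation. By Proposition~\ref{prop:5connectedK23}, $G^*$ contains no $K_{2,3}$. Since the plane has trivial homology, every Eulerian subgraph of $G^*$ is zero-homologous, so by Proposition~\ref{prop:eulerian} the candidates for $G^*_\Pi$ are exactly the nonempty Eulerian subgraphs of $G^*$. We may also assume there are no loops or digons, because $G$ is 3-connected and C5EC, so $G^*$ is simple.

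\textbf{Disconnected case.} Each component of an Eulerian graph is Eulerian, hence zero-homologous. By Proposition~\ref{prop:disconnected dual}, $\Delta g$ is additive over components. Each nonempty component has $\Delta g\ge 1$, since only the empty dual gives a planar embedding. So there are exactly two components, each of genus 1. By Theorem~\ref{thm:enami1} together with the absence of $K_{2,3}$ (and hence of $K_{2,2m}$ for $m\ge2$, $K_{1,1,2m-1}$ for $m\ge2$, and $K_{2,2,2}$), each component is a $C_3$ or a $C_4$. This gives the three listed disjoint unions.

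\textbf{Connected case with a cut-vertex.} Here I would split at a cut-vertex $f$. In a plane Eulerian graph the blocks at $f$ need not occupy consecutive intervals in the rotation at $f$. When they do, Theorem~\ref{thm:cut vertex dual} gives additivity, so there are exactly two pieces, each of genus 1, that is, $C_3$ or $C_4$ glued at a vertex. When the blocks interleave at $f$, I would instead use the face-tracing of Theorem~\ref{thm:2-cuts} style (or a direct count of the faces through $f$) to show the genus is at least that of the noninterleaved case, or compute it directly. I also need a lower-bound lemma: for any connected Eulerian $H$ with $\Delta g(H)\le 2$, each edge-disjoint cycle decomposition piece contributes. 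Concretely, I expect to show that deleting a cycle $C$ from a connected Eulerian $H$ (keeping $H-E(C)$ Eulerian) decreases $\Delta g$ by at most $\lfloor (|C|-1)/2\rfloor$ and by at least something nonnegative. This bounds how many cycles, and how long, $H$ can contain.

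\textbf{2-connected case.} I would use Theorem~\ref{cor:cycles}: a single cycle $C_k$ has $\Delta g=\lfloor (k-1)/2\rfloor$, so genus 2 forces $k\in\{5,6\}$. These should be two of the nine graphs. Otherwise $H$ has a 2-vertex-cut or is 3-connected. With a 2-cut, Theorem~\ref{thm:2-cuts} gives $2=\Delta g_1+\Delta g_2-s_1s_2$, so the pieces have genera $(1,1)$ with $s_1s_2=0$, or $(1,2)$ or $(2,1)$ with $s_1s_2=1$, or smaller combinations. I would recurse, using the list of genus-1 pieces ($C_3$, $C_4$) and induction on the number of edges, enumerating gluings of $C_3$ and $C_4$ along two vertices in a planar way without creating $K_{2,3}$. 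Degenerate "pieces" of genus 0 cannot occur, since every nonempty piece has $\Delta g\ge1$.

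The remaining subcase is $H$ 3-connected with all degrees even, so minimum degree at least 4, inside a 5-connected triangulation without $K_{2,3}$. I would bound the genus from below by finding within $H$ a peripheral-type family: disjoint or nested cycles whose $\Pi$-faces must merge. Alternatively, I would apply Theorem~\ref{thm:edgecutsupper} in reverse via Lemma~\ref{lem:numCycles} on $G$. Here the peripheral cycles of $G$ are the faces that are $\Pi$-unstable, i.e. vertices of $H$, which are nonfacial in $\Pi$. Any three pairwise nonadjacent such faces would form a peripheral family, by an extension of Lemma~\ref{lem:G-c1c2 conn}, and force $g\ge3$. Hence the vertex set of $H$ has independence number at most 2 in $G^*$. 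Combined with $K_{2,3}$-freeness and minimum degree 4, this should leave only finitely many small graphs (for instance the octahedron $K_{2,2,2}$ is excluded, while the wheel-like cases land in the figure). Each is checked by face tracing.

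The main obstacle is this last step together with the interleaving cut-vertex situation. Extending Lemma~\ref{lem:G-c1c2 conn} to three faces, and showing that the independence and $K_{2,3}$ constraints truly force the finite list of Figure~\ref{fig:g2duals}, is where I expect most of the case analysis. I would first try to prove that every vertex of $H$ has degree 2 or 4 and that $|V(H)|\le 6$, and then enumerate.
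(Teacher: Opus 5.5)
Your disconnected and cut-vertex cases are fine. The interleaving worry is in fact vacuous in the plane: the lobes at $f$ cannot cross, so an innermost one has consecutive edges, and so does the union of the others. Theorem~\ref{thm:cut vertex dual} with $k=2$ then applies.

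The genuine gap is in the step that must make the list finite. You claim that three $\Pi$-unstable faces that are pairwise nonadjacent in $G^*$ form a peripheral family and hence force $g(\Pi)\ge 3$. This is false for two reasons. First, faces nonadjacent in $G^*$ are vertex-disjoint in $G$ but may still be joined by an edge of $G$. Second, even three pairwise disjoint, nonadjacent faces can have $G-V(F_1\cup F_2\cup F_3)$ disconnected, so Lemma~\ref{lem:G-c1c2 conn} does not extend to triples.

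Your argument never uses 3-connectivity of $H$, and it would exclude a graph you yourself list. For $G$ the dodecahedron, the link of any edge of the icosahedron $G^*$ is an induced $6$-cycle. By Theorem~\ref{cor:cycles} it is a genus-2 unstable dual, yet it has three pairwise nonadjacent vertices. The paper treats the second situation separately (Case~2: a triple of disjoint, nonadjacent unstable faces whose union separates $G$). It uses Proposition~\ref{prop:disjointCycles} and a $K_{3,3}$-minor argument to show there are exactly two components, hence a $6$-cycle $F_1B_1F_2B_2F_3B_3$ in $G^*$. So ``independence number at most $2$'' is unfounded. Your 3-connected case, which the induction cannot avoid since genus-2 pieces may themselves be 3-connected, therefore has no finiteness argument.

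The 2-cut step has further holes. Theorem~\ref{thm:2-cuts} needs both sides zero-homologous, i.e.\ Eulerian, but a 2-separation of an Eulerian graph can leave odd degree at both cut vertices. For instance, if $H$ is a 3-connected Eulerian graph with one edge subdivided, its only 2-cut splits off a path and leaves odd degrees on the other side. Such an $H$ is neither 3-connected nor decomposable by that theorem. You also never compute the values $s_{\Pi_i}(U^\phi_i,V^\phi_i)$ that decide whether gluing a $C_3$ or $C_4$ onto a genus-2 piece stays at genus $2$.

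The monotonicity lemma you hope for fails without C5EC. For the cube, $G^*=K_{2,2,2}$ is a genus-1 dual by Theorem~\ref{thm:enami1}, yet it contains a $C_5$ with Eulerian complement, and $\Delta g(C_5)=2$. So some cycle deletion raises $\Delta g$, and you give no argument that uses C5EC.

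The missing idea is the paper's localization, which bounds $G^*_\Pi$ independently of its block structure. Take a maximum peripheral family $\mathcal{C}$ of $\Pi$-unstable faces; then $|\mathcal{C}|\le 2$ by Lemma~\ref{lem:numCycles}.
\begin{itemize}
\item If $|\mathcal{C}|=1$, Lemma~\ref{lem:G-c1c2 conn} (valid for pairs) forces every two unstable faces to share an edge or be joined by an edge. With C5EC 4-cut arguments this confines the unstable faces to nine explicit faces $F_1,\dots,F_9$.
\item If $|\mathcal{C}|=2$, either the separating-triple configuration occurs or all unstable vertices lie in $N[V(F_1\cup F_2)]$.
\end{itemize}
In every case one then enumerates the Eulerian subgraphs of a bounded patch of $G^*$.
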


\begin{proof}
We consider the possible structures of the $\Pi$-unstable faces and edges with respect to the planar embedding of $G$. Recall that, by Lemma \ref{lem:numCycles}, the maximum size of a peripheral family of $\Pi$-unstable cycles (i.e.~$\Pi$-unstable faces, since $G$ is C5EC) is 2. Moreover, the maximum number of pairwise disjoint, $\Pi$-noncontractible, pairwise $\Pi$-nonhomotopic cycles in $G$ is 3, by Proposition \ref{prop:disjointCycles}. 

Let $\mathcal{C}$ be a maximal peripheral family of $\Pi$-unstable cycles such that $\mathcal{C}$ is of maximum size. Then, we have two cases for the size of $\mathcal{C}$. 

\textbf{Case 1:} $|\mathcal{C}| = 1$. Let $F_1 \in \mathcal{C}$. Then, the face $F_1$ must have at least two $\Pi$-unstable edges $e_1$ and $e_2$, with each of these edges contained in exactly one other peripheral cycle. Let these peripheral cycles be $F_2$ and $F_3$, respectively. Note that $F_2$ and $F_3$ are $\Pi$-unstable and cannot form a peripheral family of cycles. Since $F_2\cup F_3$ cannot disconnect the graph, by Lemma \ref{lem:G-c1c2 conn}, these two cycles must either share an edge or be adjacent (i.e.~an edge from a vertex of $F_2$ to a vertex of $F_3$). 

Suppose first that $F_2$ and $F_3$ do not share an edge, so they are adjacent in $G$. First, we claim that these cycles must be adjacent via an edge of $F_1$. Otherwise, there is some other peripheral cycle $C$ that contains an edge from a vertex of $F_2$ to a vertex of $F_3$. Since $G$ is cubic, we can choose $C$ in such a way that it shares an edge with both $F_2$ and $F_3$. However, the four edges $e_1$ and $e_2$ and the edges from $C$ to $F_2$ and $F_3$ form an edge-cut of size four. Since $G$ is C5EC, the only 4-cuts are the trivial 4-edge cuts and since $F_2$ and $F_3$ do not share an edge, we have a contradiction. Thus, $F_2$ and $F_3$ are adjacent via an edge of $F_1$.

Now, $F_2$ and $F_3$ must each have another unstable edge. We describe the other possible unstable faces in this embedding. Let $F_4$ be the second face that shares an edge with both $F_2$ and $F_3$. Let $F_5$ and $F_6$ be the faces that share an edge with $F_1$ and with $F_2$ and $F_3$, respectively. Note that $F_5$ is only adjacent to $F_3$ if $F_1$ is a 5-cycle. Similarly, $F_1$ must be a 5-cycle for $F_6$ to be adjacent to $F_2$. Let $F_7$ and $F_8$ be the faces that share an edge with $F_4$ and with $F_2$ and $F_3$, respectively. As above, we note that $F_4$ must be a 5-cycle for $F_7$ to be adjacent to $F_3$ and $F_8$ adjacent to $F_2$. The cycles $F_5$ and $F_8$ are non-adjacent, as are $F_6$ and $F_7$, so at most one cycle from each of these pairs can be unstable. Finally, if $F_1$, $F_5$ and $F_6$ are all 5-cycles, let $F_9$ be the cycle that shares an edge with both $F_5$ and $F_6$. Note that if $F_9$ is unstable, then $F_4$, $F_7$ and $F_8$ must all be stable. Any other peripheral cycle in $G$ is disjoint from and nonadjacent to one of $F_1$, $F_2$ or $F_3$ and thus must be $\Pi$-stable. See Figure \ref{fig:g2-proof1}. 

 \begin{figure}[ht]
    \centering
    \begin{subfigure}[b]{0.35\textwidth}
         \centering
         \includegraphics[width=0.8\textwidth]{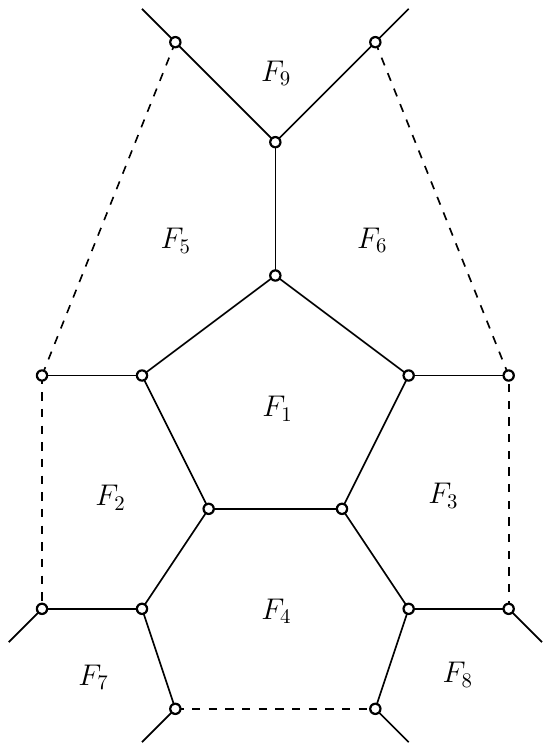}
         \caption{Faces $F_1,\dots,F_9$}
         \label{fig:g2-proof1}
     \end{subfigure}
\hspace{0.7cm}     \begin{subfigure}[b]{0.35\textwidth}
         \centering
         \includegraphics[width=0.8\textwidth]{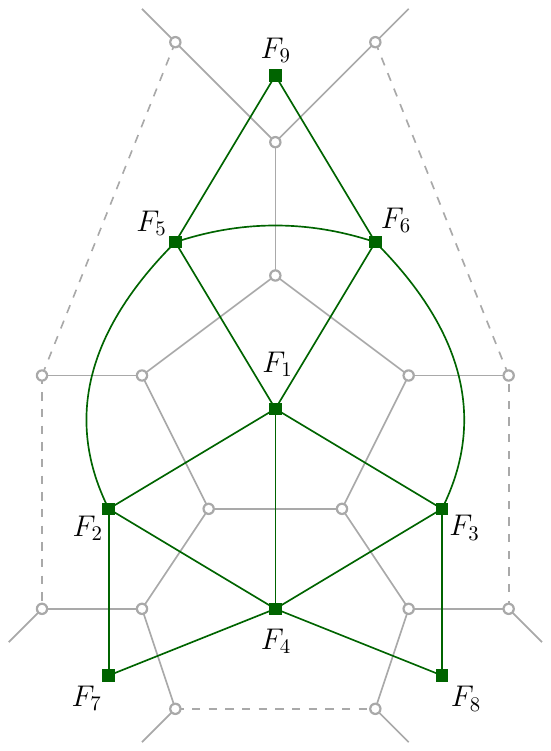}
         \caption{The dual graph}
         \label{fig:g2-proof2}
     \end{subfigure}

    \caption{The possible unstable faces in the proof of Theorem \ref{thm:g2duals}, in the case that $F_2$ and $F_3$ do not share an edge. The dual graph is indicated in green with square vertices. }
    \label{fig:g2dualsproof1}
\end{figure}

Now, we can consider the possible unstable duals by considering the subgraph of the dual graph $G^*$ induced by the vertices corresponding to these nine faces. By Proposition \ref{prop:eulerian}, the unstable dual must be Eulerian. Since the edges shared between faces $F_1$ and $F_2$ and between $F_1$ and $F_3$ are unstable, they must be included in the unstable dual. Then, to satisfy the conditions discussed above, the following pairs of vertices cannot appear together in the unstable dual: $F_4$ and $F_9$, $F_5$ and $F_8$, $F_6$ and $F_7$, $F_7$ and $F_9$, and $F_8$ and $F_9$. By generating all of the Eulerian subgraphs of this subgraph of the dual graph, shown in Figure \ref{fig:g2-proof2}, that satisfy these conditions, we obtain only unstable duals shown in Figure \ref{fig:g2duals}, as well as a 4-cycle containing the vertices $F_1$, $F_2$, $F_3$ and $F_4$. However, the 4-cycle is the unstable dual of an embedding of genus 1 and can therefore be omitted. 

Now, suppose that $F_2$ and $F_3$ do share an edge. This means that $F_1,F_2$ and $F_3$ have a common vertex, and the unstable edges in $F_1$ are incident with this vertex. Now, $F_2$ and $F_3$ must each have another unstable edge. Their shared edge may be stable or unstable but note that if it is unstable, there must be additional unstable edges, as exactly three unstable edges all incident with a single vertex gives an embedding of genus 1. As above, we describe the other possible unstable faces in an embedding of genus 2. Let $F_4$ be the face that shares a an edge with both $F_2$ and $F_3$. Note that $F_4$ is adjacent to $F_1$ via the edge shared between $F_2$ and $F_3$. Let $F_5$ and $F_6$ be the faces that share an edge with $F_1$ and $F_2$ and with $F_1$ and $F_3$, respectively. Note that $F_4$ and $F_5$ are only adjacent in $G$ if $F_2$ has length 5 and, similarly, $F_4$ and $F_6$ are only adjacent if $F_3$ has length 5. Furthermore, $F_5$ and $F_6$ are only adjacent if $F_1$ has length 5. If $F_2$ has length 5, then the face $F_7$ that shares an edge with $F_2$ and $F_4$ is adjacent to $F_1$. Similarly, if $F_3$ has length 5, then the face $F_8$ that shares an edge with $F_3$ and $F_4$ is adjacent to $F_1$. The faces $F_7$ and $F_8$ are only adjacent if $F_4$ has length 5. Finally, if $F_1$ has length 5, then let $F_9$ be the face that shares an edge with $F_5$, $F_6$ and $F_1$. See Figure \ref{fig:g2dualsproof2}. The cycles $F_5$ and $F_8$ are nonadjacent, as are $F_6$ and $F_7$, so at most one from each pair can be unstable. Finally, note that if $F_9$ is unstable, each of $F_4$, $F_7$ and $F_8$ must be stable, since they are all nonadjacent to $F_9$. 

 \begin{figure}[htb]
    \centering
    \begin{subfigure}[b]{0.35\textwidth}
         \centering
         \includegraphics[width=0.8\textwidth]{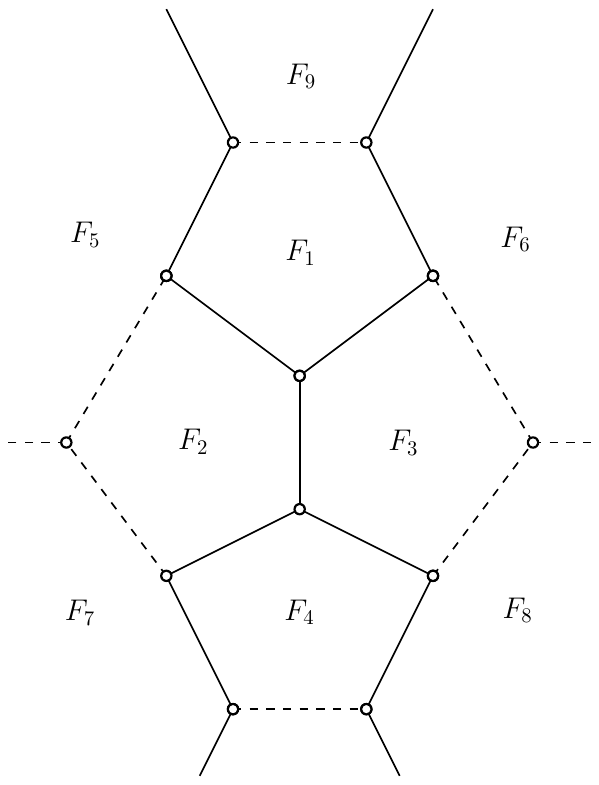}
         \caption{Faces $F_1,\dots,F_9$}
         \label{fig:g2-proof2a}
     \end{subfigure}
\hspace{0.7cm}     \begin{subfigure}[b]{0.35\textwidth}
         \centering
         \includegraphics[width=0.8\textwidth]{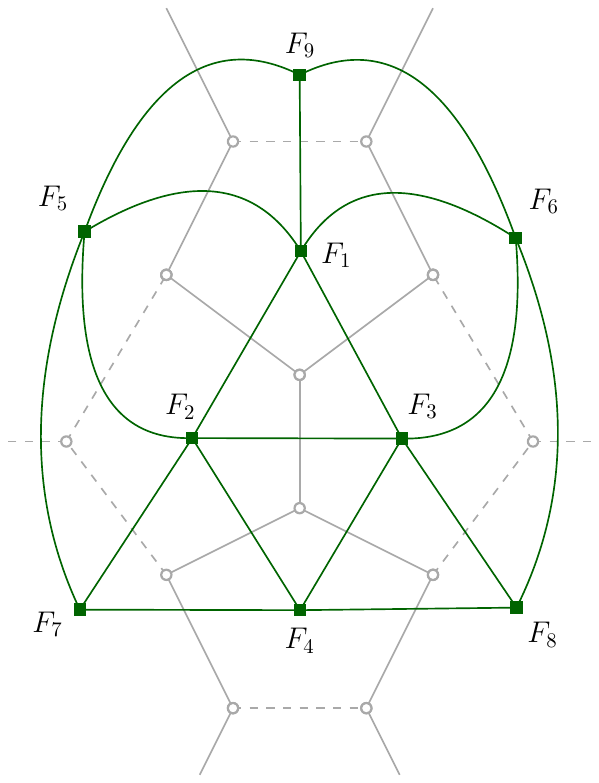}
         \caption{The dual graph}
         \label{fig:g2-proof2b}
     \end{subfigure}

    \caption{The possible unstable faces in the proof of Theorem \ref{thm:g2duals}, in the case that $F_2$ and $F_3$ do not share an edge. The dual graph is indicated in green with square vertices. }
    \label{fig:g2dualsproof2}
\end{figure}

Now, we can consider possible unstable duals by considering the subgraph of the dual graph $G^*$ induced by the vertices corresponding to these nine faces. The edges shared between faces $F_1$ and $F_2$ and between $F_1$ and $F_3$ are unstable, so they must be included in the unstable dual. Then, to satisfy the conditions above, the following pairs of vertices cannot appear together in the unstable dual: $F_4$ and $F_9$, $F_5$ and $F_8$, $F_6$ and $F_7$, $F_7$ and $F_9$, and $F_8$ and $F_9$. By generating all of the Eulerian subgraphs of the described subgraph of the dual that satisfy these conditions, we obtain only unstable duals shown in Figure \ref{fig:g2duals}, as well as a 3-cycle containing the vertices $F_1$, $F_2$ and $F_3$ and a 4-cycle containing the vertices $F_1$, $F_2$, $F_3$ and $F_4$. However, the 3-cycle and 4-cycle are unstable duals of embeddings of genus 1 and can therefore be omitted. 

\textbf{Case 2:} $|\mathcal{C}| = 2$. Let $F_1,F_2\in \mathcal{C}$. By Proposition \ref{prop:disconnected dual}, if the $\Pi$-unstable dual is disconnected, each component must be a 3-cycle or a 4-cycle, since these are the two possible unstable duals of embeddings with genus 1. Thus, we can consider only the cases in which the $\Pi$-unstable dual is connected. 

Suppose first that there is some peripheral cycle $F_3$ such that $F_1$, $F_2$ and $F_3$ are pairwise disjoint, $\Pi$-noncontractible and pairwise $\Pi$-nonhomotopic. Then, since $g(\Pi)=2$, it must be the case that $F_3$ is adjacent to one of $F_1$ or $F_2$, or $G-V(F_1\cup F_2\cup F_3)$ is disconnected. Assume the latter, and let $A$ be a component of $G-V(F_1\cup F_2\cup F_3)$. Since no pair of $F_1$, $F_2$ and $F_3$ disconnects $G$, each of these three cycles must have at least one edge to $A$. 

We now argue that at least two of the cycles $F_1$, $F_2$ and $F_3$ must have at least two edges to $A$, unless $A$ contains a single vertex. 

Suppose first that $A$ is acyclic and contains more than one vertex. Then, the subgraph of $G$ induced by the vertices of $A$ contains at least two leaves and, because $G$ is cubic, there must be at least two edges from at least one of $F_1$, $F_2$ and $F_3$ to $A$. Without loss of generality, suppose there are at least two edges from $F_1$. If both $F_2$ and $F_3$ have only a single edge to $A$, we can identify a 4-cut that separates two cycles in $G$. As shown in the proof of Lemma \ref{lem:one edge}, the vertices incident with an edge from $F_1$ to $A$ are all consecutive on the cycle $F_1$, forming the $A$-interval on $F_1$. There is a cycle in $G$ formed by two of the edges from $F_1$ to $A$, the path between the endpoints of these edges in $F_1$ contained in the $A$-interval, and a path between the endpoints of these edges in $A$. Now, deleting the edges in $F_1$ that have exactly one endpoint in the $A$-interval, the edge from $F_2$ to $A$, and the edge from $F_3$ to $A$ separates the cycles $F_2$ and $F_3$ from the cycle described above, giving a contradiction. Thus, at least two of $F_1$, $F_2$ and $F_3$ must have two edges to $A$. 

Similarly, if $A$ contains a cycle then because $G$ is C5EC, there must be at least five edges from $F_1\cup F_2\cup F_3$ to $A$, with each of the three cycles having at least one edge to $A$. An argument similar to that of the acyclic case shows that at least two of the three cycles have at least two edges to $A$. 

Next, we claim that there can only be two components in the graph $G-V(F_1\cup F_2 \cup F_3)$. Suppose otherwise, and let $A_1$, $A_2$, and $A_3$ be components of $G-V(F_1\cup F_2 \cup F_3)$. Construct the graph $\widehat{G}$ from $G$ by adding a vertex $u_i$ in the interior of the face $F_i$, for $i=,1,2,3$, and adding an edge from this new vertex to every vertex in the corresponding $F_i$. Clearly, since $G$ is planar, $\widehat{G}$ is also planar. 

However, $\widehat{G}$ contains a $K_{3,3}$ minor. To show this, we show that each component $A_i$ contains a vertex $v_i$ with disjoint paths to each of the cycles $F_1$, $F_2$, $F_3$. Take any vertex $v_i\in V(A_i)$ and consider paths $Q_1$, $Q_2$ and $Q_3$ in $\widehat{G}$ from $v_i$ to $u_1$, $u_2$ and $u_3$, respectively, such that $Q_1$ does not contain any vertices in $F_2$ or $F_3$, and similarly for the paths $Q_2$ and $Q_3$. If these paths are disjoint, then we are done. Otherwise, if $Q_1$ and $Q_2$ intersect, consider the intersection closest to $v_i$ on the paths. Since $G$ is cubic, every vertex in $A_i$ in $\widehat{G}$ has degree 3, so $Q_1$ and $Q_2$ must intersect at an edge. Let $x$ be the endpoint of this shared edge that is closer to $v_i$ on the path $Q_1$, and $y$ the other endpoint of the shared edge. If $x$ also appears first on the path from $v_i$ to $u_2$, then replace the vertex $v_i$ under consideration by $y$, and let $Q'_1$ be the segment of $Q_1$ from $y$ to $u_1$, $Q'_2$ the segment of $Q_2$ from $y$ to $u_2$, and $Q'_3$ the path $Q_3$ appended to the segment of $Q_1$ from $v_i$ to $y$. Otherwise, if $y$ appears first on the path $Q_2$ from $v_i$ to $u_2$, replace the vertex under consideration with $x$ and construct $Q'_1$, $Q'_2$ and $Q'_3$ as above. These paths have at least one intersection less than $Q_1$, $Q_2$ and $Q_3$. Repeat this process until there are no intersections on the paths to $u_1$ and $u_2$. Then repeat with the paths to $u_2$ and $u_3$ to obtain a set of three paths without intersections. Then, through this process, we obtain three vertices $v_1$, $v_2$ and $v_3$ and paths to $u_1$, $u_2$ and $u_3$ that form a $K_{3,3}$ minor in $\widehat{G}$. This contradicts $G$ being planar and therefore there can be only two components in $G-V(F_1\cup F_2\cup F_3)$. 

\begin{figure}[t]
    \centering
    \begin{subfigure}[b]{0.35\textwidth}
         \centering
         \includegraphics[width=\textwidth]{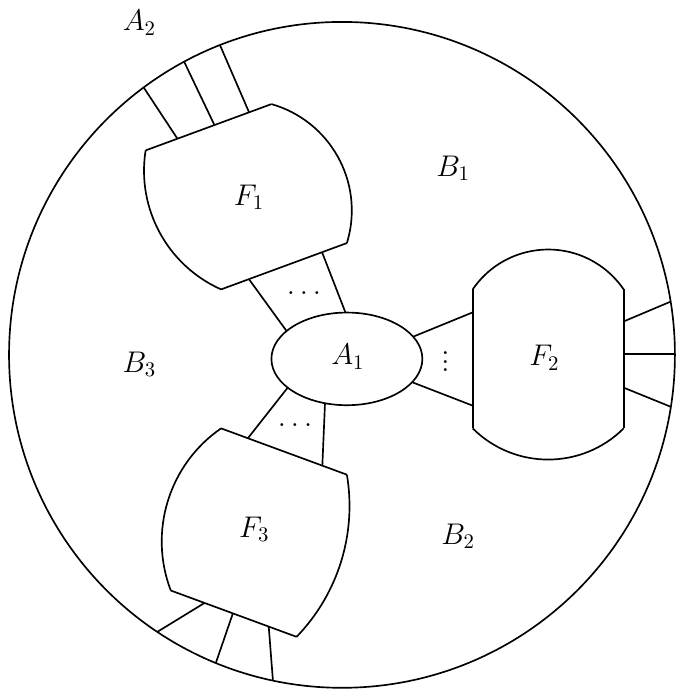}
         \caption{The structure of the three separating faces in $G$}
         \label{fig:g2-proofcase3sep}
     \end{subfigure}
\hspace{1cm}     \begin{subfigure}[b]{0.35\textwidth}
         \centering
         \includegraphics[width=\textwidth]{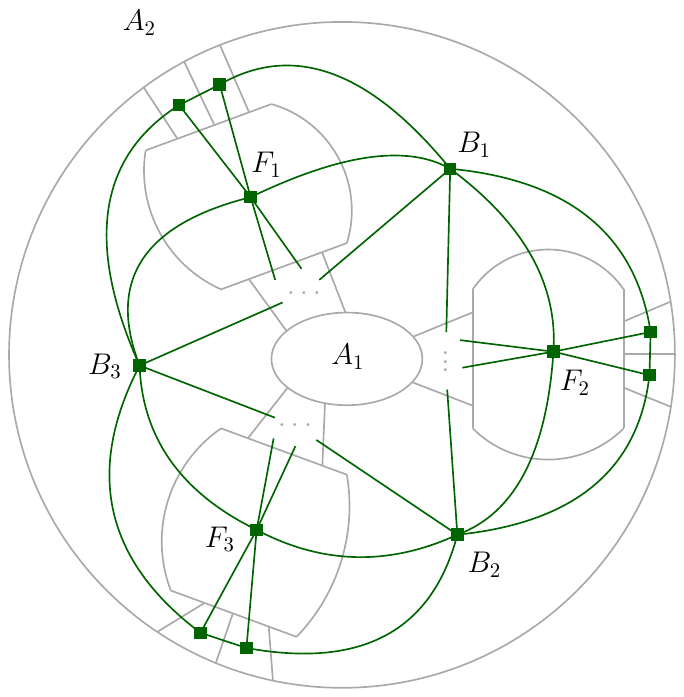}
         \caption{The dual graph}
         \label{fig:g2-proofcase3sepb}
     \end{subfigure}

    \caption{The $\Pi$-unstable faces $F_1$, $F_2$ and $F_3$ and their neighboring faces, with the corresponding vertices in the dual. The dual graph is indicated in green with square vertices. Note that one of the three cycles may have only one edge to $A_1$ or to $A_2$.}
    \label{fig:g2dualsproofcase2sep}
\end{figure}

Now, with only two components in $G-V(F_1\cup F_2\cup F_3)$ and the edges from each $F_i$ to each of these components appearing consecutively on $F_i$, there must exist faces $B_1$, $B_2$ and $B_3$ such that $F_1,B_1,F_2,B_2,F_3,B_3$ forms a 6-cycle in the dual graph $G^*$. The edge shared between cycles $F_1$ and $B_1$ is an edge that appears between the $A_1$-interval and the $A_2$-interval on $F_1$, where $A_1$ and $A_2$ are the two components of $G-V(F_1\cup F_2\cup F_3)$. Similarly for the other pairs of faces. See Figure \ref{fig:g2-proofcase3sep}. 

Each unstable face $F_i$ has at least two unstable edges, so there must be additional $\Pi$-unstable faces. If there is a pair of these additional unstable faces that are both disjoint from $F_2$ and $F_3$, then these faces must share an edge with each other. Similarly for unstable faces disjoint from $F_1$ and $F_2$ or from $F_1$ and $F_3$. 

Figure \ref{fig:g2-proofcase3sepb} shows the possible vertices of the $\Pi$-unstable dual. From above, we know that the vertices corresponding to $F_1$, $F_2$ and $F_3$ must each have degree at least two and that any pair of vertices in the unstable dual that are both adjacent to $F_1$ (similarly, to $F_2$ or $F_3$) and not to $F_2$ and $F_3$ must themselves be adjacent vertices. Further note that in order for the unstable dual to be connected, at least one of $B_1$, $B_2$ and $B_3$ must be $\Pi$-unstable. 

As above, we can examine all of the Eulerian subgraphs of the dual containing vertices that satisfy these conditions. Those that correspond to embeddings of genus 2 can all be found in Figure \ref{fig:g2duals}.

Now, suppose there is no third cycle disjoint from and non-adjacent to $F_1$ and $F_2$ whose union with $F_1$ and $F_2$ disconnects the graph. Then, any other $\Pi$-unstable peripheral cycle must either share an edge with or be adjacent to one of $F_1$ or $F_2$. Both $F_1$ and $F_2$ must be non-zero-homologous and thus we can cut along $F_1$ and $F_2$ to obtain a planar embedding. Note that this operation only affects those vertices in $N[V(F_1\cup F_2)]$. A planar embedding of a cubic C5EC planar graph has no unstable vertices. Thus, the only possible $\Pi$-unstable vertices are those in $N[V(F_1\cup F_2)]$. 

 \begin{figure}[hbt]
    \centering
    \begin{subfigure}[b]{0.35\textwidth}
         \centering
         \includegraphics[width=\textwidth]{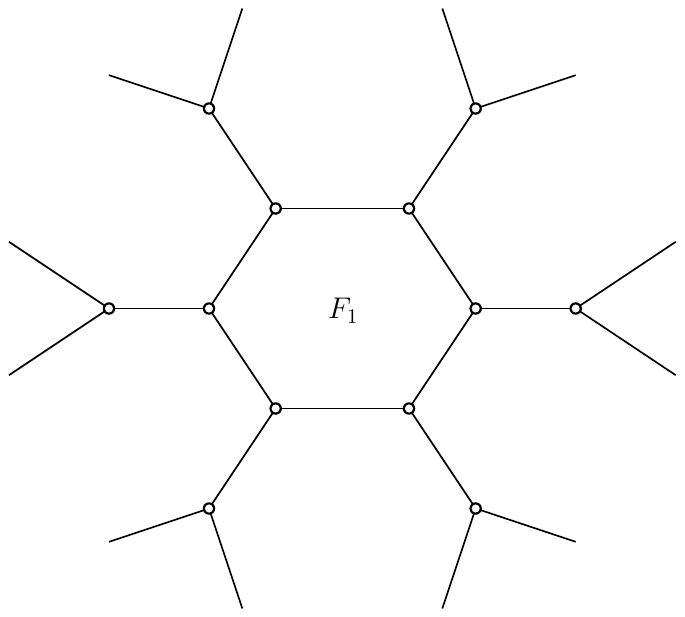}
         \caption{The possible $\Pi$-unstable edges with a $\Pi$-unstable endpoint in the neighborhood of $F_1$}
         \label{fig:g2-proofcase2}
     \end{subfigure}
\hspace{1cm}     \begin{subfigure}[b]{0.35\textwidth}
         \centering
         \includegraphics[width=\textwidth]{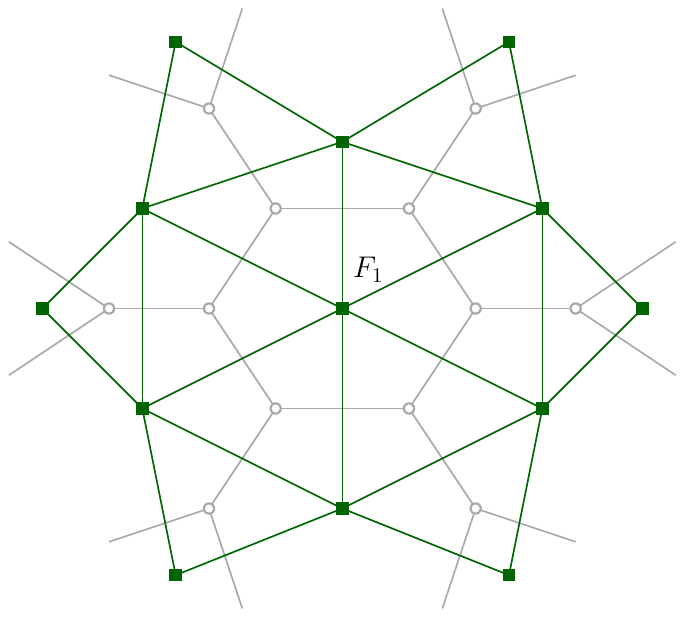}
         \caption{The dual graph}
         \label{fig:g2-proofcase2b}
     \end{subfigure}

    \caption{The possible $\Pi$-unstable vertices in the neighborhood of $F_1$ and the corresponding vertices of the dual. The dual graph is indicated in green with square vertices. }
    \label{fig:g2dualsproofcase2}
\end{figure}

In this way, we can determine possible $\Pi$-unstable edges on faces that share an edge with or are adjacent to $F_1$, and therefore also the possible edges of the $\Pi$-unstable dual. See Figure \ref{fig:g2dualsproofcase2}. We can similarly determine the possible $\Pi$-unstable faces in the neighborhood of $F_2$. Note that because both $F_1$ and $F_2$ are $\Pi$-unstable, they must both contain at least two $\Pi$-unstable edges and have degree at least two in the dual. Moreover, $F_1$ and $F_2$ are disjoint and non-adjacent, so none of the vertices of the dual in Figure \ref{fig:g2-proofcase2b} correspond to the face $F_2$. 

Using these facts, we can determine all of the possible unstable duals containing no three vertices corresponding to a peripheral family of cycles, or to three cycles whose union disconnects the graph. Those that correspond to embeddings of genus 2 can all be found in Figure \ref{fig:g2duals}. 
\end{proof}

\section{Conclusion and future work}
\label{section:future}

In this paper, we have explored new methods of describing the 2-cell embeddings of cubic graphs. Using the unstable dual, calculating the genus distribution of a cubic graph can be transformed into a subgraph counting problem. 

In Section~\ref{section:g2}, we characterize the unstable duals of a cubic C5EC planar graph corresponding to embeddings with genus 2. 
Using these graphs, we can construct unstable duals corresponding to an embedding with genus 3 of a cubic C5EC planar graph, using the results of Section \ref{section:cuts and duals}. Those that are 2-connected are shown in Figure \ref{fig:g3duals}. However, the number of such graphs makes it difficult to obtain a precise characterization of all unstable duals of embeddings with genus 3, as in the genus 2 case above. A full characterization of the unstable duals corresponding to embeddings of genus $k\geq 3$ in a cubic C5EC planar graph remains open. 

  \begin{figure}[!b]
    \centering
    \hfill
    \begin{subfigure}[b]{0.09\textwidth}
         \centering
         \includegraphics[width=\textwidth]{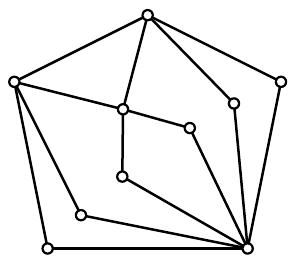}%
     \end{subfigure}
\hfill
\begin{subfigure}[b]{0.09\textwidth}
         \centering
         \includegraphics[width=\textwidth]{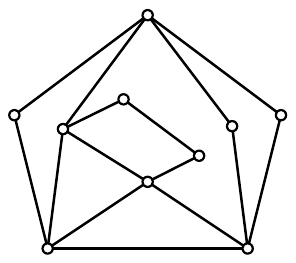}%
     \end{subfigure}
     \hfill
     \begin{subfigure}[b]{0.09\textwidth}
         \centering
         \includegraphics[width=\textwidth]{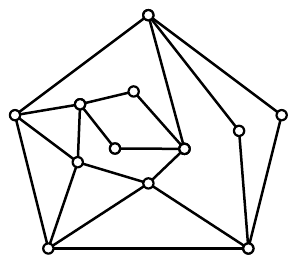}%
     \end{subfigure}
     \hfill
    \begin{subfigure}[b]{0.09\textwidth}
         \centering
         \includegraphics[width=\textwidth]{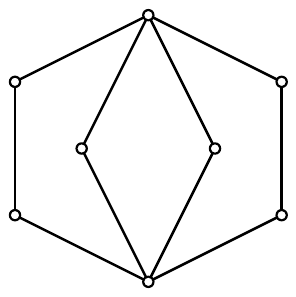}%
     \end{subfigure}
     \hfill
     \begin{subfigure}[b]{0.09\textwidth}
         \centering
         \includegraphics[width=\textwidth]{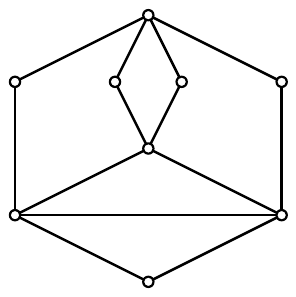}
     \end{subfigure}
\hfill
     \begin{subfigure}[b]{0.09\textwidth}
         \centering
         \includegraphics[width=\textwidth]{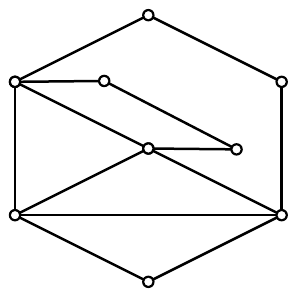}
     \end{subfigure}
     \hfill
     \begin{subfigure}[b]{0.09\textwidth}
         \centering
         \includegraphics[width=\textwidth]{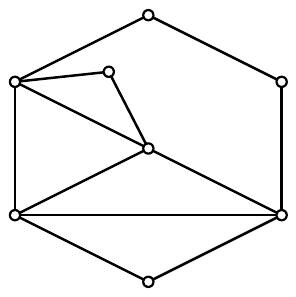}
     \end{subfigure}
\hfill
     \begin{subfigure}[b]{0.09\textwidth}
         \centering
         \includegraphics[width=\textwidth]{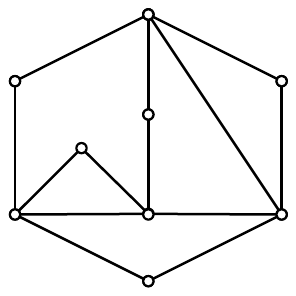}
     \end{subfigure}
\hfill
     \begin{subfigure}[b]{0.09\textwidth}
         \centering
         \includegraphics[width=\textwidth]{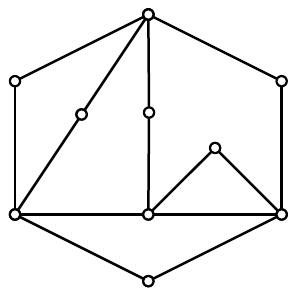} 
     \end{subfigure}
     \hfill
     \hfill
     
     \vspace{0.3cm}

\hfill
     \begin{subfigure}[b]{0.09\textwidth}
         \centering
         \includegraphics[width=\textwidth]{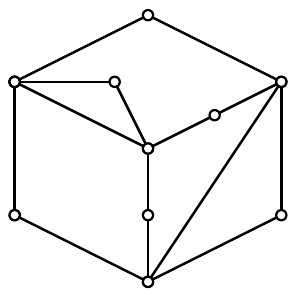}
     \end{subfigure}
\hfill
     \begin{subfigure}[b]{0.09\textwidth}
         \centering
         \includegraphics[width=\textwidth]{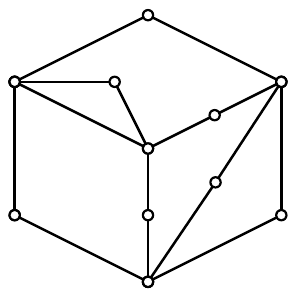}
     \end{subfigure}
\hfill
     \begin{subfigure}[b]{0.09\textwidth}
         \centering
         \includegraphics[width=\textwidth]{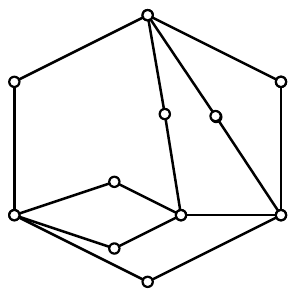}
     \end{subfigure}
     \hfill
     \begin{subfigure}[b]{0.09\textwidth}
         \centering
         \includegraphics[width=\textwidth]{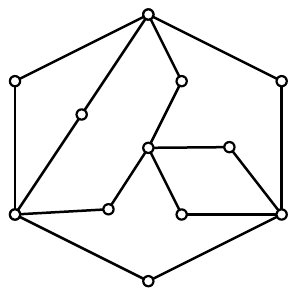}
     \end{subfigure}
\hfill
     \begin{subfigure}[b]{0.09\textwidth}
         \centering
         \includegraphics[width=\textwidth]{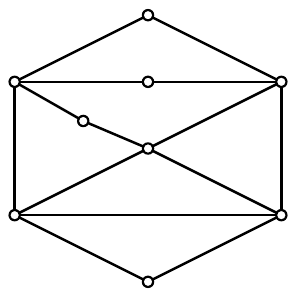}
     \end{subfigure}
\hfill
     \begin{subfigure}[b]{0.09\textwidth}
         \centering
         \includegraphics[width=\textwidth]{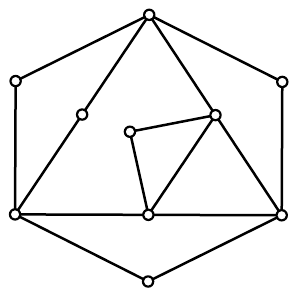}
     \end{subfigure}
     \hfill
     \begin{subfigure}[b]{0.09\textwidth}
         \centering
         \includegraphics[width=\textwidth]{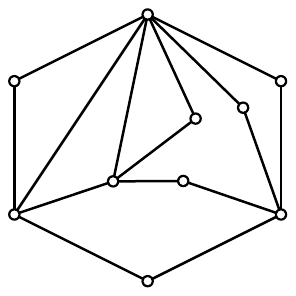}
     \end{subfigure}
\hfill
     \begin{subfigure}[b]{0.09\textwidth}
         \centering
         \includegraphics[width=\textwidth]{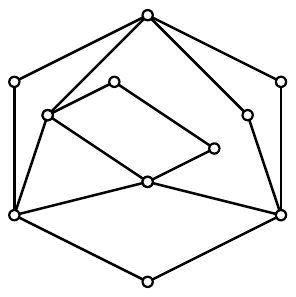}
     \end{subfigure}
\hfill
     \begin{subfigure}[b]{0.09\textwidth}
         \centering
         \includegraphics[width=\textwidth]{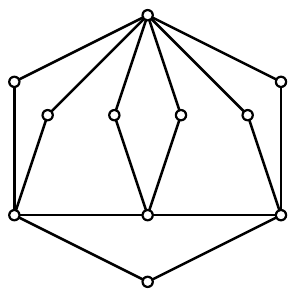}
     \end{subfigure}
     \hfill
     \hfill
     
     \vspace{0.3cm}

\hfill
     \begin{subfigure}[b]{0.09\textwidth}
         \centering
         \includegraphics[width=\textwidth]{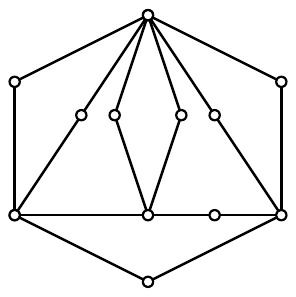}
     \end{subfigure}
\hfill
     \begin{subfigure}[b]{0.09\textwidth}
         \centering
         \includegraphics[width=\textwidth]{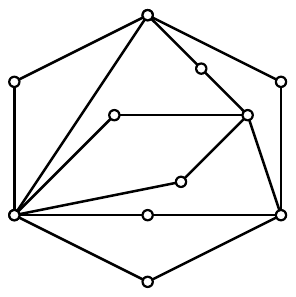}
     \end{subfigure}
     \hfill
     \begin{subfigure}[b]{0.09\textwidth}
         \centering
         \includegraphics[width=\textwidth]{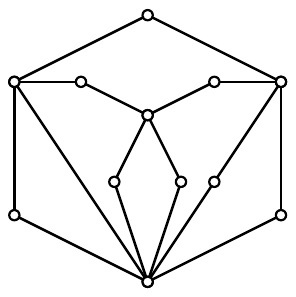}
     \end{subfigure}
     \hfill
     \begin{subfigure}[b]{0.09\textwidth}
         \centering
         \includegraphics[width=\textwidth]{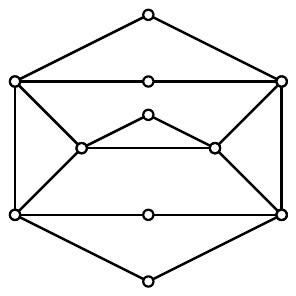}
     \end{subfigure}
\hfill
     \begin{subfigure}[b]{0.09\textwidth}
         \centering
         \includegraphics[width=\textwidth]{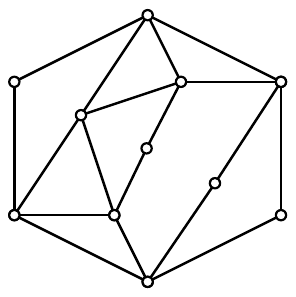}
     \end{subfigure}
\hfill
     \begin{subfigure}[b]{0.09\textwidth}
         \centering
         \includegraphics[width=\textwidth]{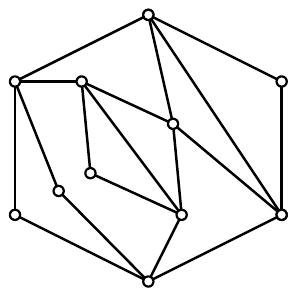}
     \end{subfigure}
     \hfill
     \begin{subfigure}[b]{0.09\textwidth}
         \centering
         \includegraphics[width=\textwidth]{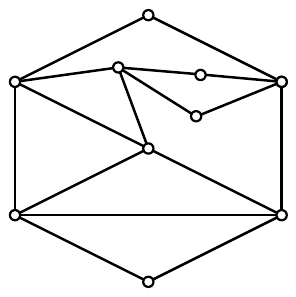}
     \end{subfigure}
\hfill
     \begin{subfigure}[b]{0.09\textwidth}
         \centering
         \includegraphics[width=\textwidth]{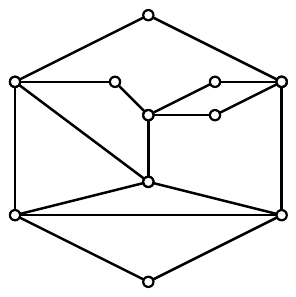}
     \end{subfigure}
\hfill
     \begin{subfigure}[b]{0.09\textwidth}
         \centering
         \includegraphics[width=\textwidth]{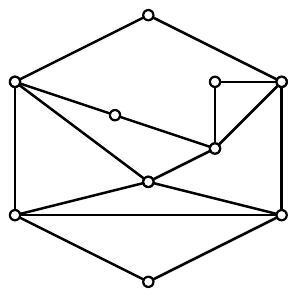}
     \end{subfigure}
     \hfill
     \hfill
     
\vspace{0.3cm}
\hfill
     \begin{subfigure}[b]{0.09\textwidth}
         \centering
         \includegraphics[width=\textwidth]{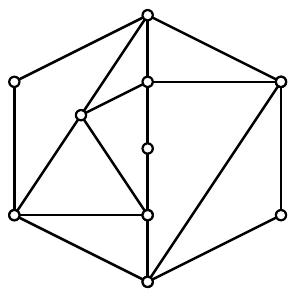}
     \end{subfigure}
     \hfill
     \begin{subfigure}[b]{0.09\textwidth}
         \centering
         \includegraphics[width=\textwidth]{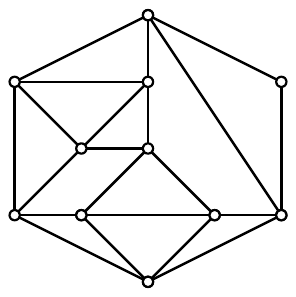}
     \end{subfigure}
\hfill
     \begin{subfigure}[b]{0.09\textwidth}
         \centering
         \includegraphics[width=\textwidth]{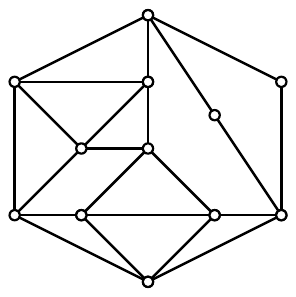}
     \end{subfigure}
     \hfill
     \begin{subfigure}[b]{0.09\textwidth}
         \centering
         \includegraphics[width=\textwidth]{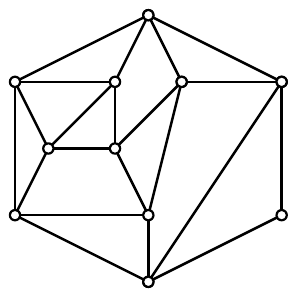}
     \end{subfigure}
\hfill
     \begin{subfigure}[b]{0.09\textwidth}
         \centering
         \includegraphics[width=\textwidth]{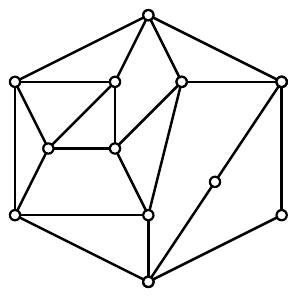}
     \end{subfigure}
\hfill
     \begin{subfigure}[b]{0.09\textwidth}
         \centering
         \includegraphics[width=\textwidth]{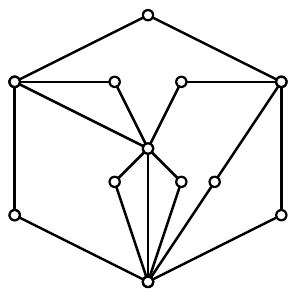}
     \end{subfigure}
     \hfill
     \begin{subfigure}[b]{0.09\textwidth}
         \centering
         \includegraphics[width=\textwidth]{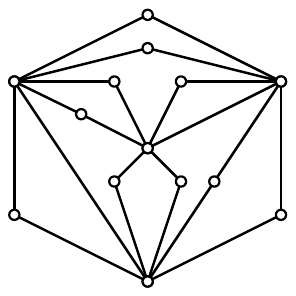}
     \end{subfigure}
\hfill
     \begin{subfigure}[b]{0.09\textwidth}
         \centering
         \includegraphics[width=\textwidth]{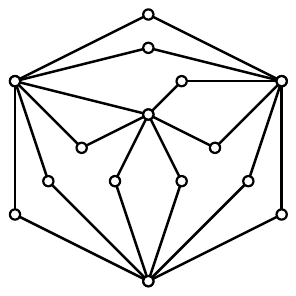}
     \end{subfigure}
\hfill
     \begin{subfigure}[b]{0.09\textwidth}
         \centering
         \includegraphics[width=\textwidth]{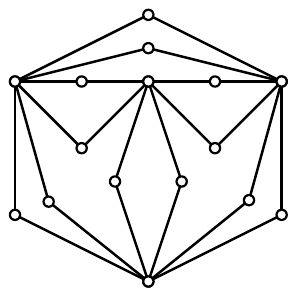}
     \end{subfigure}
     \hfill
     \hfill
     
\end{figure}

\newpage

\begin{figure}[!ht]
\centering
\ContinuedFloat
\hfill
     \begin{subfigure}[b]{0.09\textwidth}
         \centering
         \includegraphics[width=\textwidth]{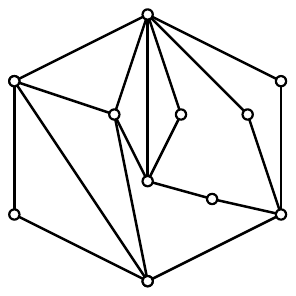}
     \end{subfigure}
\hfill
     \begin{subfigure}[b]{0.09\textwidth}
         \centering
         \includegraphics[width=\textwidth]{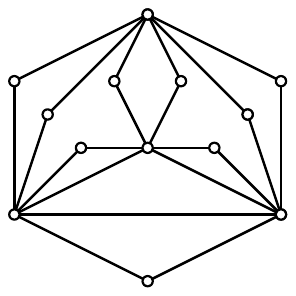}
     \end{subfigure}
\hfill
     \begin{subfigure}[b]{0.09\textwidth}
         \centering
         \includegraphics[width=\textwidth]{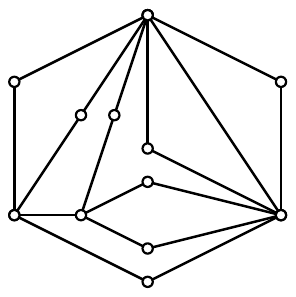}
     \end{subfigure}
     \hfill
     \begin{subfigure}[b]{0.09\textwidth}
         \centering
         \includegraphics[width=\textwidth]{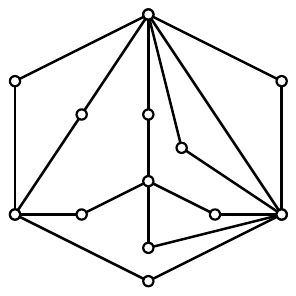}
     \end{subfigure}
\hfill
     \begin{subfigure}[b]{0.09\textwidth}
         \centering
         \includegraphics[width=\textwidth]{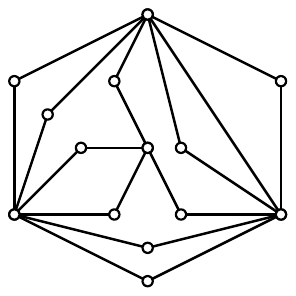}
     \end{subfigure}
\hfill
     \begin{subfigure}[b]{0.09\textwidth}
         \centering
         \includegraphics[width=\textwidth]{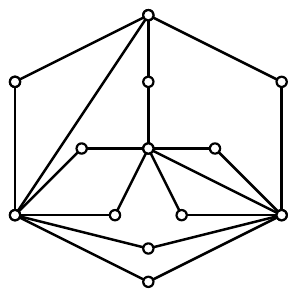}
     \end{subfigure}
     \hfill
     \begin{subfigure}[b]{0.09\textwidth}
         \centering
         \includegraphics[width=\textwidth]{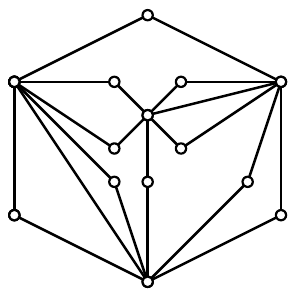}
     \end{subfigure}
\hfill
     \begin{subfigure}[b]{0.09\textwidth}
         \centering
         \includegraphics[width=\textwidth]{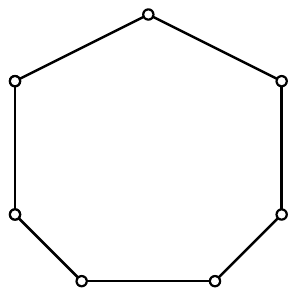}
     \end{subfigure}
     \hfill
     \begin{subfigure}[b]{0.09\textwidth}
         \centering
         \includegraphics[width=\textwidth]{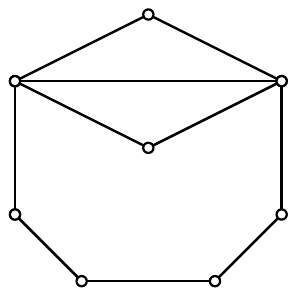}
     \end{subfigure}
\hfill
\hfill
     
     \vspace{0.3cm}

\hfill
      \begin{subfigure}[b]{0.09\textwidth}
         \centering
         \includegraphics[width=\textwidth]{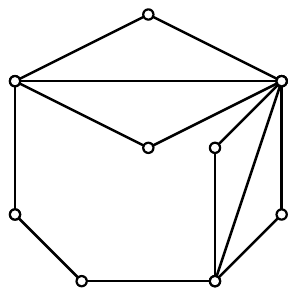}
     \end{subfigure}
     \hfill
     \begin{subfigure}[b]{0.09\textwidth}
         \centering
         \includegraphics[width=\textwidth]{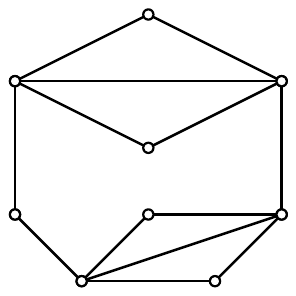}
     \end{subfigure}
\hfill
     \begin{subfigure}[b]{0.09\textwidth}
         \centering
         \includegraphics[width=\textwidth]{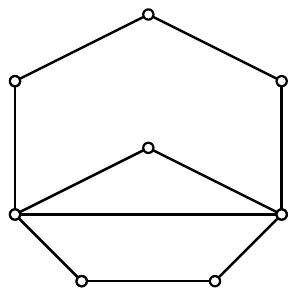}
     \end{subfigure}
          \hfill
     \begin{subfigure}[b]{0.09\textwidth}
         \centering
         \includegraphics[width=\textwidth]{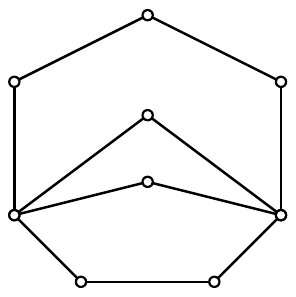}
     \end{subfigure}
\hfill
     \begin{subfigure}[b]{0.09\textwidth}
         \centering
         \includegraphics[width=\textwidth]{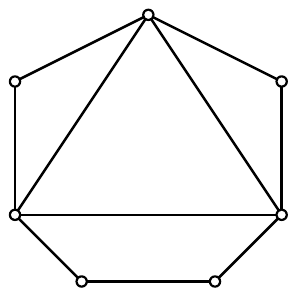}
     \end{subfigure}
     \hfill
     \begin{subfigure}[b]{0.09\textwidth}
         \centering
         \includegraphics[width=\textwidth]{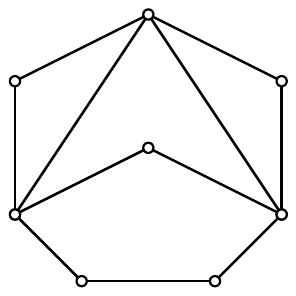}
     \end{subfigure}
\hfill
     \begin{subfigure}[b]{0.09\textwidth}
         \centering
         \includegraphics[width=\textwidth]{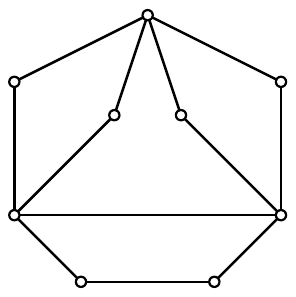}
     \end{subfigure}     \hfill
     \begin{subfigure}[b]{0.09\textwidth}
         \centering
         \includegraphics[width=\textwidth]{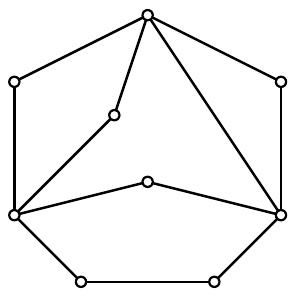}
     \end{subfigure}
\hfill
     \begin{subfigure}[b]{0.09\textwidth}
         \centering
         \includegraphics[width=\textwidth]{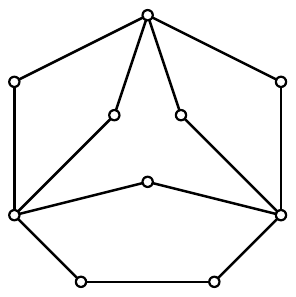}
     \end{subfigure}
\hfill
\hfill
     
\vspace{0.3cm}
\hfill
     \begin{subfigure}[b]{0.09\textwidth}
         \centering
         \includegraphics[width=\textwidth]{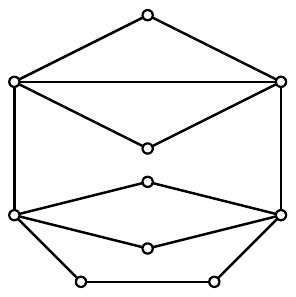}
     \end{subfigure}
          \hfill
     \begin{subfigure}[b]{0.09\textwidth}
         \centering
         \includegraphics[width=\textwidth]{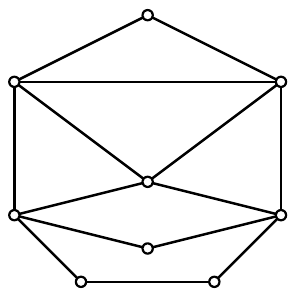}
     \end{subfigure}
\hfill
     \begin{subfigure}[b]{0.09\textwidth}
         \centering
         \includegraphics[width=\textwidth]{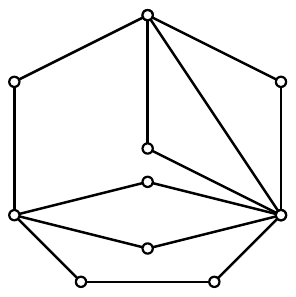}
     \end{subfigure}
\hfill
     \begin{subfigure}[b]{0.09\textwidth}
         \centering
         \includegraphics[width=\textwidth]{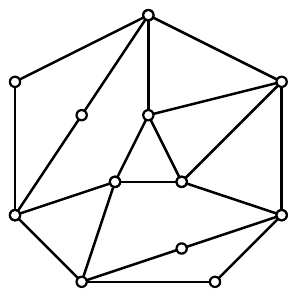}
     \end{subfigure}
\hfill
     \begin{subfigure}[b]{0.09\textwidth}
         \centering
         \includegraphics[width=\textwidth]{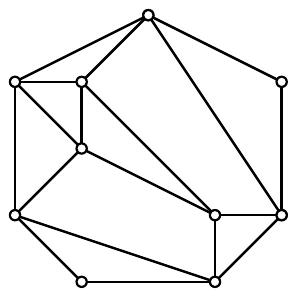}
     \end{subfigure}
\hfill
     \begin{subfigure}[b]{0.09\textwidth}
         \centering
         \includegraphics[width=\textwidth]{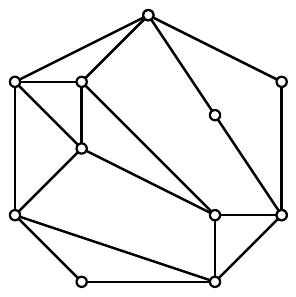}
     \end{subfigure}     \hfill
     \begin{subfigure}[b]{0.09\textwidth}
         \centering
         \includegraphics[width=\textwidth]{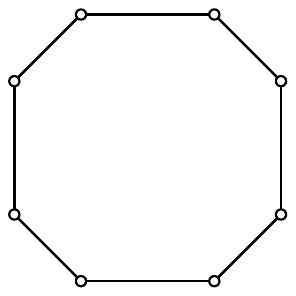}
     \end{subfigure}
\hfill
     \begin{subfigure}[b]{0.09\textwidth}
         \centering
         \includegraphics[width=\textwidth]{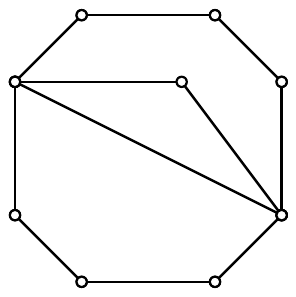}
     \end{subfigure}
\hfill
     \begin{subfigure}[b]{0.09\textwidth}
         \centering
         \includegraphics[width=\textwidth]{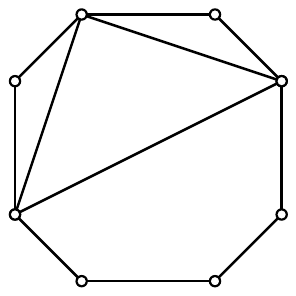}
     \end{subfigure}
      \hfill
      \hfill

      \vspace{0.3cm}

      \hfill
      \begin{subfigure}[b]{0.09\textwidth}
         \centering
         \includegraphics[width=\textwidth]{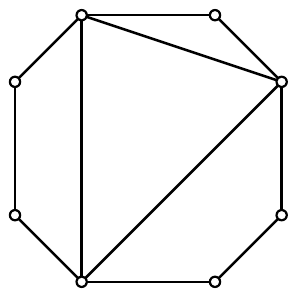}
     \end{subfigure}
     \hfill      \begin{subfigure}[b]{0.09\textwidth}
         \centering
         \includegraphics[width=\textwidth]{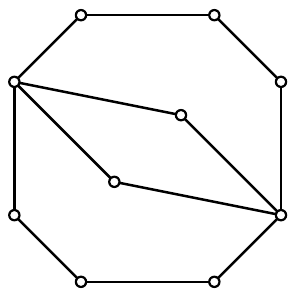}
     \end{subfigure}
      \hfill
      \begin{subfigure}[b]{0.09\textwidth}
         \centering
         \includegraphics[width=\textwidth]{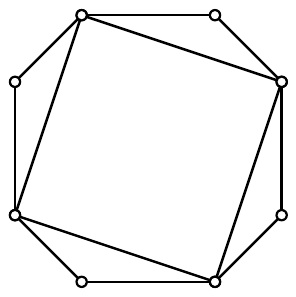}
     \end{subfigure}
     \hfill
      \begin{subfigure}[b]{0.09\textwidth}
         \centering
         \includegraphics[width=\textwidth]{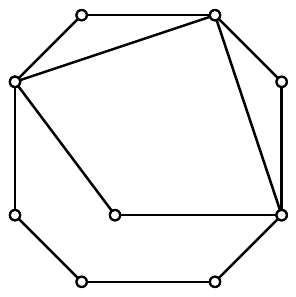}
     \end{subfigure}
     \hfill
      \begin{subfigure}[b]{0.09\textwidth}
         \centering
         \includegraphics[width=\textwidth]{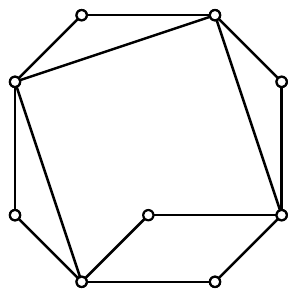}
     \end{subfigure}
      \hfill
      \begin{subfigure}[b]{0.09\textwidth}
         \centering
         \includegraphics[width=\textwidth]{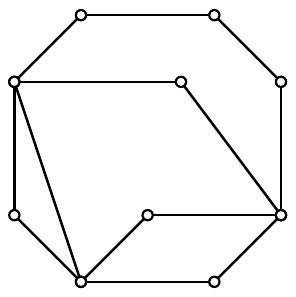}
     \end{subfigure}
      \hfill    
      \begin{subfigure}[b]{0.09\textwidth}
         \centering
         \includegraphics[width=\textwidth]{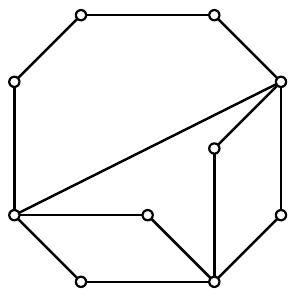}
     \end{subfigure}
      \hfill
      \begin{subfigure}[b]{0.09\textwidth}
         \centering
         \includegraphics[width=\textwidth]{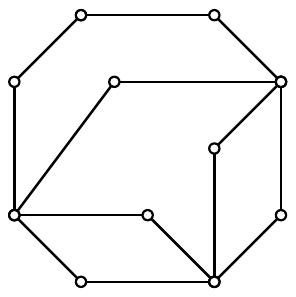}
     \end{subfigure}
      \hfill
      \begin{subfigure}[b]{0.09\textwidth}
         \centering
         \includegraphics[width=\textwidth]{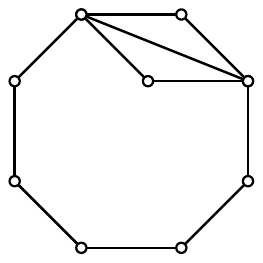}
     \end{subfigure}
     \hfill
     \hfill
     
   \vspace{0.3cm}
   
\hfill
      \begin{subfigure}[b]{0.09\textwidth}
         \centering
         \includegraphics[width=\textwidth]{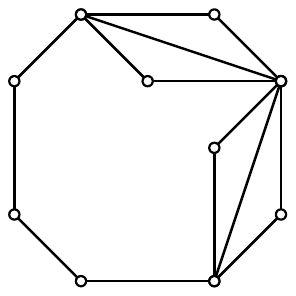}
     \end{subfigure}
     \hfill
      \begin{subfigure}[b]{0.09\textwidth}
         \centering
         \includegraphics[width=\textwidth]{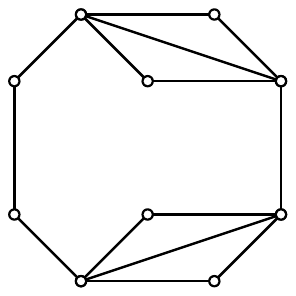}
     \end{subfigure}
      \hfill
      \begin{subfigure}[b]{0.09\textwidth}
         \centering
         \includegraphics[width=\textwidth]{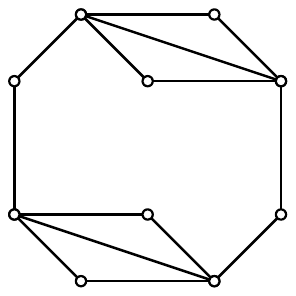}
     \end{subfigure}
     \hfill
      \begin{subfigure}[b]{0.09\textwidth}
         \centering
         \includegraphics[width=\textwidth]{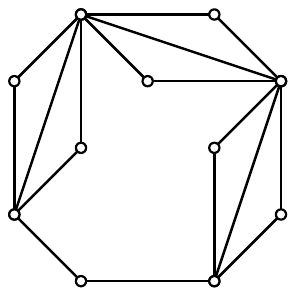}
     \end{subfigure}
      \hfill
      \begin{subfigure}[b]{0.09\textwidth}
         \centering
         \includegraphics[width=\textwidth]{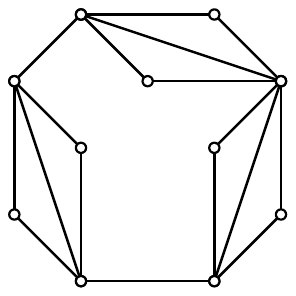}
     \end{subfigure}
       \hfill  
      \begin{subfigure}[b]{0.09\textwidth}
         \centering
         \includegraphics[width=\textwidth]{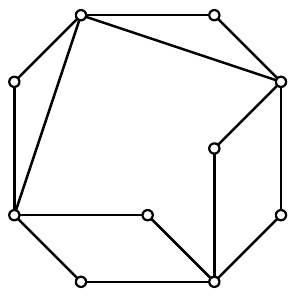}
     \end{subfigure}
      \hfill
      \begin{subfigure}[b]{0.09\textwidth}
         \centering
         \includegraphics[width=\textwidth]{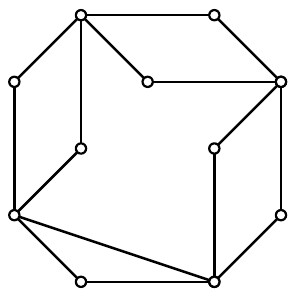}
     \end{subfigure}
      \hfill
      \begin{subfigure}[b]{0.09\textwidth}
         \centering
         \includegraphics[width=\textwidth]{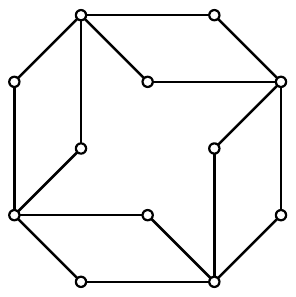}
     \end{subfigure}
     \hfill
      \begin{subfigure}[b]{0.09\textwidth}
         \centering
         \includegraphics[width=\textwidth]{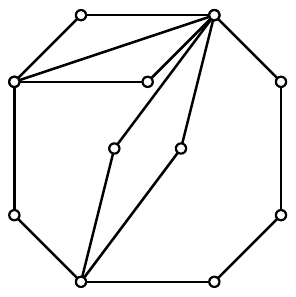}
     \end{subfigure}
      \hfill
\hfill

\vspace{0.3cm}
\hfill
      \begin{subfigure}[b]{0.09\textwidth}
         \centering
         \includegraphics[width=\textwidth]{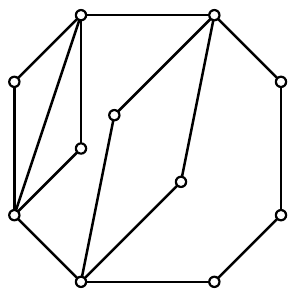}
     \end{subfigure}
 \hfill
      \begin{subfigure}[b]{0.09\textwidth}
         \centering
         \includegraphics[width=\textwidth]{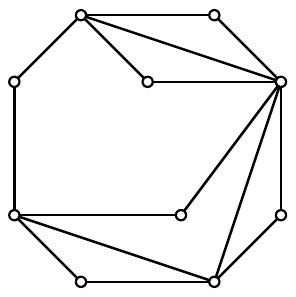}
     \end{subfigure}
      \hfill
      \begin{subfigure}[b]{0.09\textwidth}
         \centering
         \includegraphics[width=\textwidth]{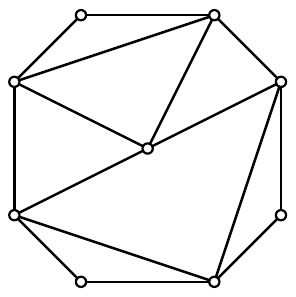}
     \end{subfigure}
 \hfill
      \begin{subfigure}[b]{0.09\textwidth}
         \centering
         \includegraphics[width=\textwidth]{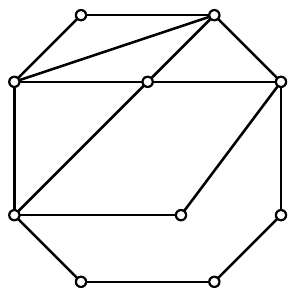}
     \end{subfigure}
 \hfill
      \begin{subfigure}[b]{0.09\textwidth}
         \centering
         \includegraphics[width=\textwidth]{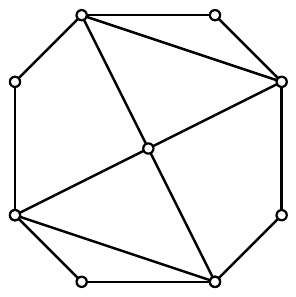}
     \end{subfigure}
 \hfill
      \begin{subfigure}[b]{0.09\textwidth}
         \centering
         \includegraphics[width=\textwidth]{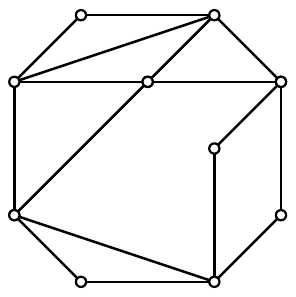}
     \end{subfigure}
      \hfill
      \begin{subfigure}[b]{0.09\textwidth}
         \centering
         \includegraphics[width=\textwidth]{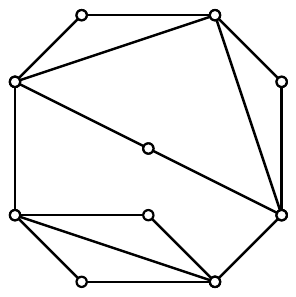}
     \end{subfigure}
     \hfill
      \begin{subfigure}[b]{0.09\textwidth}
         \centering
         \includegraphics[width=\textwidth]{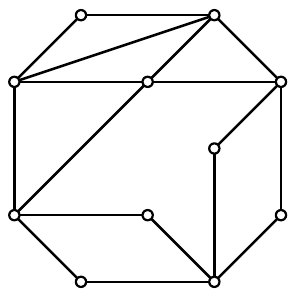}
     \end{subfigure}
     \hfill
      \begin{subfigure}[b]{0.09\textwidth}
         \centering
         \includegraphics[width=\textwidth]{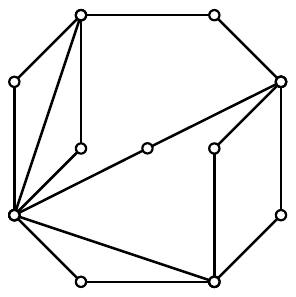}
     \end{subfigure}
     \hfill
     \hfill
     
\vspace{0.3cm}

\hfill
      \begin{subfigure}[b]{0.09\textwidth}
         \centering
         \includegraphics[width=\textwidth]{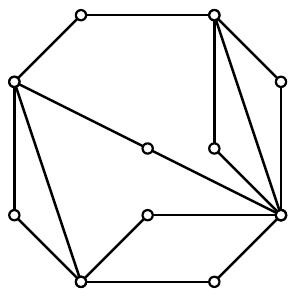}
     \end{subfigure}
     \hfill
      \begin{subfigure}[b]{0.09\textwidth}
         \centering
         \includegraphics[width=\textwidth]{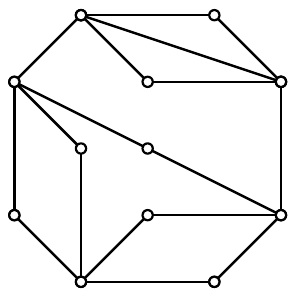}
     \end{subfigure}
     \hfill
      \begin{subfigure}[b]{0.09\textwidth}
         \centering
         \includegraphics[width=\textwidth]{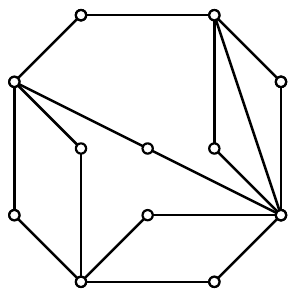}
     \end{subfigure}
      \hfill
      \begin{subfigure}[b]{0.09\textwidth}
         \centering
         \includegraphics[width=\textwidth]{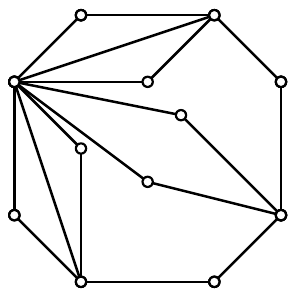}
     \end{subfigure}
      \hfill
      \begin{subfigure}[b]{0.09\textwidth}
         \centering
         \includegraphics[width=\textwidth]{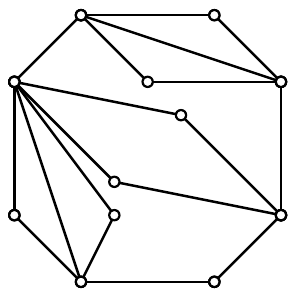}
     \end{subfigure}
     \hfill
      \begin{subfigure}[b]{0.09\textwidth}
         \centering
         \includegraphics[width=\textwidth]{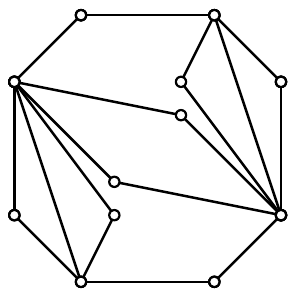}
     \end{subfigure}
     \hfill
      \begin{subfigure}[b]{0.09\textwidth}
         \centering
         \includegraphics[width=\textwidth]{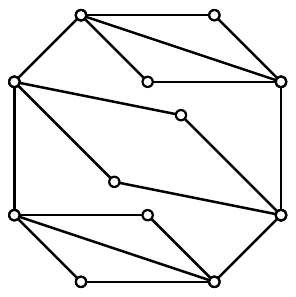}
     \end{subfigure}
      \hfill
      \begin{subfigure}[b]{0.09\textwidth}
         \centering
         \includegraphics[width=\textwidth]{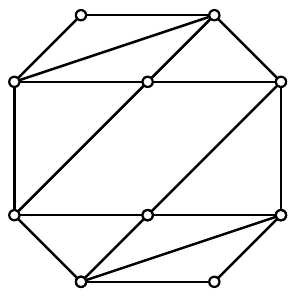}
     \end{subfigure}
      \hfill
      \begin{subfigure}[b]{0.09\textwidth}
         \centering
         \includegraphics[width=\textwidth]{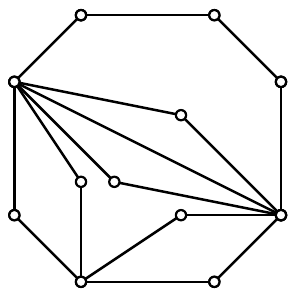}
     \end{subfigure}
      \hfill
      \hfill

      \vspace{0.3cm}
      \hfill
      \begin{subfigure}[b]{0.09\textwidth}
         \centering
         \includegraphics[width=\textwidth]{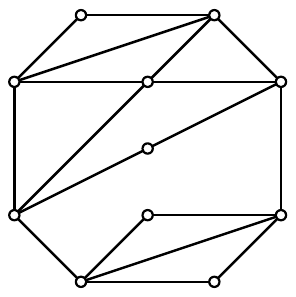}
     \end{subfigure}
     \hfill
      \begin{subfigure}[b]{0.09\textwidth}
         \centering
         \includegraphics[width=\textwidth]{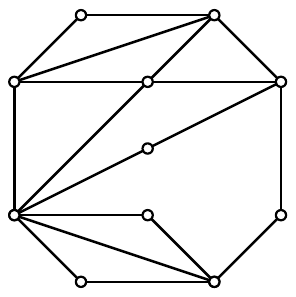}
     \end{subfigure}
     \hfill  
      \begin{subfigure}[b]{0.09\textwidth}
         \centering
         \includegraphics[width=\textwidth]{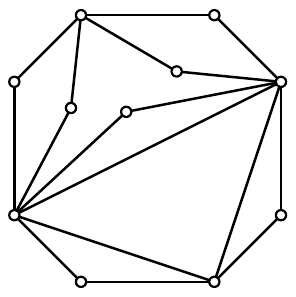}
     \end{subfigure}
      \hfill
      \begin{subfigure}[b]{0.09\textwidth}
         \centering
         \includegraphics[width=\textwidth]{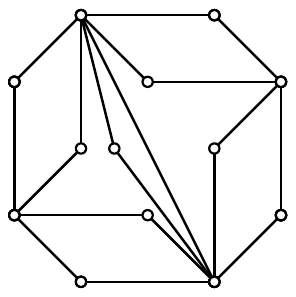}
     \end{subfigure}
     \hfill
      \begin{subfigure}[b]{0.09\textwidth}
         \centering
         \includegraphics[width=\textwidth]{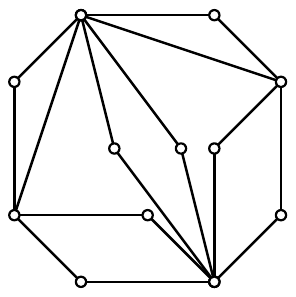}
     \end{subfigure}
     \hfill
      \begin{subfigure}[b]{0.09\textwidth}
         \centering
         \includegraphics[width=\textwidth]{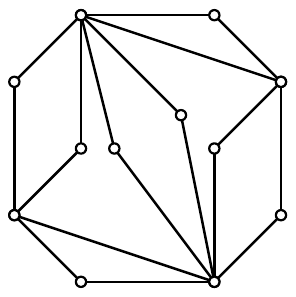}
     \end{subfigure}
     \hfill
      \begin{subfigure}[b]{0.09\textwidth}
         \centering
         \includegraphics[width=\textwidth]{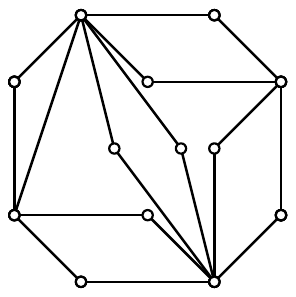}
     \end{subfigure}
     \hfill
      \begin{subfigure}[b]{0.09\textwidth}
         \centering
         \includegraphics[width=\textwidth]{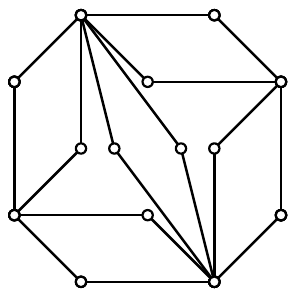}
     \end{subfigure}
     \hfill
      \begin{subfigure}[b]{0.09\textwidth}
         \centering
         \includegraphics[width=\textwidth]{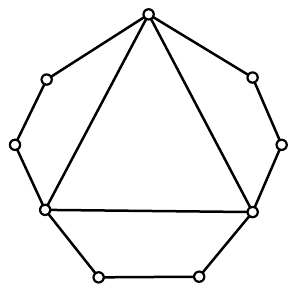}
     \end{subfigure}
      \hfill
     \hfill

    \caption{2-connected unstable duals corresponding to an embedding of genus 3 in a cubic C5EC planar graph}
    \label{fig:g3duals}
\end{figure}

\begin{question}
    Characterize the unstable duals of embeddings of genus $k\geq 2$ in a cubic planar graph, or of genus $k\geq 3$ in a cubic C5EC planar graph. 
\end{question}

Moreover, in Section \ref{section:g2}, we characterize only the unstable duals with genus 2 of a cubic C5EC planar graph. Using Enami's characterization of toroidal embeddings of a cubic 3-connected planar graph and Theorems \ref{thm:cut vertex dual} and \ref{thm:2-cuts}, we may use copies of $K_{2,2,2}$, $K_{2,2m}$ and $K_{1,1,2m-1}$, for $m \in \mathbb{N}$, to construct the unstable dual of an embedding with genus 2 of a cubic 3-connected planar graph that is not C5EC. However, a full characterization of such unstable duals remains open. 

\begin{question}
    Characterize the unstable duals of embeddings of genus 2 in a cubic 3-connected planar graph that is not C5EC. 
\end{question}

In part II of this paper, we explore applications of the unstable dual to problems related to the LCGD Conjecture, including cubic counterexamples to the conjecture and a class of cubic C5EC planar graphs whose first three terms of the genus distribution satisfy the log-concavity condition. 

\bibliographystyle{abbrv}
\bibliography{references1}
\end{document}